\font\Bbb=msbm10 scaled \magstep 2
\def\C{\hbox{\Bbb C}}
\def\R{\hbox{\Bbb R}}
\def\Z{\hbox{\Bbb Z}}
\def\N{\hbox{\Bbb N}}
\font\midBbb=msbm8
\def\midC{\hbox{\midBbb C}}
\def\midR{\hbox{\midBbb R}}
\def\midZ{\hbox{\midBbb Z}}
\def\midN{\hbox{\midBbb N}}
\def\Log{\hbox{\rm Log}}
\newtheorem{theorem}{\bf Theorem}
\newtheorem{lemma}[theorem]{\bf Lemma}
\newtheorem{remark}[theorem]{\bf Remark}
\newtheorem{example}[theorem]{\bf Example}
\newtheorem{proposition}[theorem]{\bf Proposition}
\newtheorem{corollary}[theorem]{\bf Corollary}
\newtheorem{definition}[theorem]{\bf Definition}
\numberwithin{equation}{section} \numberwithin{theorem}{section}
\title{Maximally reducible monodromy of bivariate hypergeometric systems}
\author{Timur Sadykov}
\address{Department of Mathematics
\newline \indent and Computer Science,
\newline \indent Russian State Plekhanov University
\newline \indent 125993, Moscow, Russia.}
\email{SadykovTM@rsute.ru}
\thanks{The first author was supported by the grant of the Government
of the Russian Federation for investigations under the guidance of
the leading scientists of the Siberian Federal University
(contract No.~14.Y26.31.0006), by the grants of the Russian
Foundation for Basic Research (grants no.~13-01-12417-ofi-m2,
15-31-20008-mol-a-ved), as well as by the Japanese Society for the
Promotion of Science. The second author was supported by JSPS
grant no.~20540086.}
\author{Susumu Tanab\'e}
\address{Department of Mathematics,
\newline \indent Galatasaray University,
\newline \indent 34357, Istanbul, Turkey.}
\email{tanabe@gsu.edu.tr}
\begin{document}

\begin{abstract}
We investigate branching of solutions to holonomic bivariate
hypergeometric systems of Horn's type. Special attention is paid
to the invariant subspace of Puiseux polynomial solutions. We
mainly study Horn systems defined by simplicial configurations and
Horn systems whose Ore-Sato polygons are either zonotopes or
Minkowski sums of a triangle and segments proportional to its
sides. We prove a necessary and sufficient condition for the
monodromy representation to be maximally reducible, that is, for
the space of holomorphic solutions to split into the direct sum of
one-dimensional invariant subspaces.

\end{abstract}

\maketitle

\section{Introduction
\label{sec:introduction}}
To compute the monodromy group of a differential equation or a system of such equations
is a notoriously difficult problem in the analytic theory of differential equations.
One of the reasons for this is that the computation of the monodromy group requires
full understanding of the structure of the solution space of the system of
differential equations under study, including the dimension of this space,
a basis in it, the fundamental group of the complement to singularities of the system as well as analytic continuation
and branching properties of the chosen basis.

The purpose of the present paper is to investigate the monodromy
of certain families of systems of partial differential equations
of hypergeometric type. It uses and extends the results
in~\cite{Sadykov-Doklady2008} and~\cite{Sadykov-Journal of SFU}.
While the monodromy group of the classical Gauss second-order
hypergeometric differential equation has been computed by Schwarz
and the monodromy of the ordinary generalised hypergeometric
equation has been described in~\cite{BeukersHeckman}, the problem
of finding the monodromy group of a general hypergeometric system
of partial differential equations remains unsolved despite all the
effort and several well-understood special cases
(see~\cite{Beukers2009}, \cite{Beukers2011} and the references
therein). The original motivation for the results presented in the
paper goes back to the work~\cite{CDS} where the authors have
posed the problem of describing the Gelfand-Kapranov-Zelevinsky
(GKZ) nonconfluent hypergeometric systems (see~\cite{gkz89}),
whose solution space contains a nonzero rational function for a
suitable choice of its parameters. In terms of monodromy, this is
equivalent to the existence of a one-dimensional subspace in the
space of holomorphic solutions to the system under study with the
trivial action of monodromy on it.

In the present paper, we solve a closely related problem of
describing all holonomic bivariate hypergeometric systems in the
sense of Horn (see~\cite{DMS} and the references therein) whose
solution space splits into a direct sum of one-dimensional
monodromy invariant subspaces
(Theorem~\ref{thm:zonotopesHavePolBasis}). We call such a
monodromy representation {\it maximally reducible.} The relation
between GKZ and Horn hypergeometric systems has been studied in
detail in Section~5 of~\cite{DMS}: for any GKZ system there exists
a canonically defined Horn system and a naturally defined
bijective map from a subspace in the space of its analytic
solutions into the space of solutions to the GKZ system. The
solutions of the Horn system that are not taken into account by
this map are its persistent Puiseux polynomial solutions in the
sense of Definition~\ref{def:PersistentPolynomialSolution} below.
Here and throughout the paper by a Puiseux polynomial we mean a
finite linear combination of monomials with (in general) arbitrary
complex exponents. As it has been announced in Theorem 5.3
of~\cite{DMS}, persistent polynomial solutions are the cokernel of
the map from GKZ solutions to Horn system solutions.

In our formulation, the above mentioned question of ~\cite{CDS}
can be answered in the following manner. The dimension of the
space of non-persistent Puiseux polynomial solutions to a Horn
system is equal to that of the space of Puiseux polynomial
solutions to the corresponding GKZ system. For the bivariate Horn
system, full characterisation of persistent solutions is given in
Proposition~\ref{prop:persistentPolSol} and
Corollary~\ref{cor:persistentPolsol}.

The authors are thankful to the referee for the careful reading of
the manuscript and numerous suggestions that have led to a
substantial improvement of the paper. Publication of the paper in
the present special issue of the journal is a tribute to
A.\,A.~Bolibrukh for his constant support of the second author
(S.T.) over many years.


\section{Notation, definitions and preliminaries
\label{sec:notation}}

Throughout the paper, the following notation will be used:

$n=$ the number of $x$ variables;

$m=$ the number of rows in the matrix defining the Horn system;

$\nu(a_1, b_1; a_2, b_2)\equiv \nu \left(
\begin{array}{rr}
 a_1 & b_1 \\
 a_2 & b_2 \\
\end{array}
\right) =$ the index of the two vectors $(a_1, b_1),(a_2, b_2),$
see Definition~\ref{def:index};

for $m=(m_1,\ldots,m_n),$ $|m|=\sum_{i=1}^n m_i$ and $m!=m_{1}!
\ldots m_{n}!;$

for $x=(x_1,\ldots,x_n)$ and $m=(m_1,\ldots,m_n),$
$x^m=x_{1}^{m_1}\ldots x_{n}^{m_n};$

$\Z_{\geq 0}=$\,the set of non-negative integers, $\Z_{\leq
0}=$\,the set of non-positive integers;

${\rm Horn}(\varphi)=$ the Horn hypergeometric system defined by
the Ore-Sato coefficient $\varphi,$ see Definition~\ref{horn}.

${\rm Horn}(A,c)=$ the Horn hypergeometric system defined by the
Ore-Sato coefficient~(\ref{oresatocoeff}) with $t_i=1$ for any
$i=1,\ldots,n$ and $U(s)\equiv 1.$ See the construction after
Definition~\ref{horn};

$\Psi(\varphi)=$ the subspace of Puiseux polynomial solutions to the
Horn system defined by the Ore-Sato coefficient $\varphi,$ see Definition~\ref{def:Horn system};

$\Psi_{0}(\varphi) \subset \Psi(\varphi)$ is the subspace of persistent Puiseux polynomial solutions to the
Horn system defined by the Ore-Sato coefficient $\varphi$, see Definition~\ref{def:PersistentPolynomialSolution};

$\mathcal{F}=$ the set of all pure fully supported solutions to a Horn
system. Observe that it is in general not a linear subspace
since the intersection of the domains of convergence of all elements
in~$\mathcal{F}$ may be empty;

$\mathcal{F}_{x^{(0)}}=$ the linear space of fully supported solutions
to a Horn system which converge at a nonsingular point~$x^{(0)};$

$\mathcal{A}(\varphi)=$ the amoeba of the singularity of an Ore-Sato coefficient~$\varphi;$ see Definition~\ref{def:amoeba};

$C^{\vee}=$ the dual of a convex cone~$C;$

$\mathcal{P}(\varphi)$ is the polygon of the Ore-Sato coefficient $\varphi,$ see Definition~\ref{def:polygon}.

For an Ore-Sato coefficient~$\varphi$ and $\zeta\in\R^n$ we set
$$
M(\varphi,\zeta) =\left\{\begin{array}{ll}
\mbox{the connected component of\ } ^c\!\mathcal{A}(\varphi)
\mbox{\ which contains\ }~\zeta, \mbox{if\ } \zeta\in {^c\!\mathcal{A}}(\varphi), \\
\R^n, \mbox{\ if\ } \zeta\in\mathcal{A}(\varphi);
\end{array}
\right.
$$

$S({\operatorname{Horn}}(A,c))$ is the space of solutions to the
system $\operatorname{Horn}(A,c)$, that are holomorphic away from
the singular hypersurface.

\vskip0.5cm

\begin{definition}
\rm A formal Laurent series
\begin{equation}
\sum_{s\in\midZ^n} \varphi(s) \, x^{s}
\label{series}
\end{equation}
is called {\it hypergeometric} if for any $j=1,\ldots,n$ the
quotient $\varphi(s+e_{j})/\varphi(s)$ is a rational function
in~$s = (s_1,\ldots,s_n).$ Throughout the paper we denote this rational function by
$P_{j}(s)/Q_{j}(s+e_{j}).$ Here~${\{e_j\}}_{j=1}^{n}$ is the
standard basis of the lattice~$\Z^n.$ By the {\it support} of this
series we mean the subset of~$\Z^n$ on which $\varphi(s)\neq 0.$
We say that such a series is {\it fully supported,} if the convex
hull of its support contains (a translation of) an open
$n$-dimensional cone.
\end{definition}

A {\it hypergeometric function} is a (multi-valued) analytic function
obtained by means of analytic continuation of a hypergeometric series
with a nonempty domain of convergence along all possible paths.

\begin{theorem} {\rm (Ore, Sato \cite{GGR}, \cite{Sato})} The coefficients of
a hypergeometric series are given by the formula
\begin{equation}
\varphi(s) = t^{s} \, U(s) \, \prod_{i=1}^{m} \Gamma(\langle {\bf A}_{i}, s
\rangle + c_{i}),
\label{oresatocoeff}
\end{equation}
where $t^s = t_{1}^{s_1}\ldots t_{n}^{s_n},$ $t_i, c_i\in\C,$
${\bf A}_i=(A_{i,1}, \ldots A_{i,n})$ $\in\Z^n,$ $i =1, \ldots, m,$ and
$U(s)$~is a product of  certain rational function and a periodic
function $\phi(s)$ s.t. $\phi(s+e_j) =\phi(s)$ for every $j =
1,\ldots,n$. \label{thm:Ore-Sato}
\end{theorem}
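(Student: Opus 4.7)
The plan is to reverse-engineer $\varphi$ from the $n$ hypergeometricity ratios $R_j(s) := \varphi(s+e_j)/\varphi(s) = P_j(s)/Q_j(s+e_j)$, identifying each linear factor of the numerators and denominators with the shift of a Gamma function.

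First, I would factor each $P_j$ and $Q_j$ over $\C$ into linear forms $\langle\alpha,s\rangle+\beta$, and then exploit the compatibility relations
\[
R_i(s+e_j)\,R_j(s) \;=\; R_j(s+e_i)\,R_i(s),
\]
which are forced by the commutativity of the shifts in $s_i$ and $s_j$. Comparing zero and pole loci on both sides, one sees that the linear factors appearing in the various $P_j$ and $Q_j$ must fit together into coherent families, each family being an integer-translation orbit of a single linear form.

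Next I would use the observation that
\[
\frac{\Gamma(\langle A,s+e_j\rangle+c)}{\Gamma(\langle A,s\rangle+c)}
\]
is a rational function of $s$ precisely when $\langle A,e_j\rangle=A_j\in\Z$; imposing this for every coordinate forces the covector $A$ to lie in $\Z^n$, which is exactly the integrality statement on the ${\bf A}_i$. Each coherent family identified above can then be packaged into a single Gamma factor $\Gamma(\langle {\bf A}_i,s\rangle+c_i)$ with ${\bf A}_i\in\Z^n$. After dividing $\varphi$ by the resulting finite product of Gammas, the ratios $R_j$ reduce to nonzero constants $t_j$; solving these trivial first-order difference equations produces the factor $t^s$ up to a multiplier that is invariant under every shift $s\mapsto s+e_j$, i.e.\ up to a periodic function $\phi(s)$, while any remaining rational mismatch is absorbed into $U(s)$.

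The main obstacle is the combinatorial step: showing that after the initial $\C$-factorization the linear factors really do partition into finitely many orbits under integer translation, each orbit assembling into a single Gamma with integer covector. This pigeonholing relies essentially on the simultaneous consistency of all $n$ ratio equations and cannot be done one coordinate at a time; it is the heart of the original Ore--Sato argument carried out in \cite{GGR,Sato}.
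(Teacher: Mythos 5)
The paper does not prove this theorem at all: it is stated as a cited classical result of Ore and Sato, with references to \cite{GGR} and \cite{Sato}, and the paper explicitly remarks that recovering $\varphi$ from the ratios ``requires solving a system of difference equations which is only solvable under some compatibility conditions on $P_i,Q_i$,'' deferring the careful analysis to \cite{Sadykov-SMZh}. So there is no internal proof to compare your proposal against; what you have written is a sketch of the standard Ore--Sato argument, not an alternative to anything in this paper.

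As a sketch it is pointed in the right direction, but it contains a genuine gap that cannot be dismissed as a detail. Your very first step is to ``factor each $P_j$ and $Q_j$ over $\C$ into linear forms $\langle\alpha,s\rangle+\beta$.'' For $n\ge 2$ a multivariate polynomial does not in general decompose into linear factors, and nothing in the mere rationality of the ratios $R_j$ grants this. The claim that the $P_j$, $Q_j$ are products of affine-linear forms with rationally commensurable, and ultimately integer, covectors is itself one of the main conclusions one must extract from the compatibility relations $R_i(s+e_j)R_j(s)=R_j(s+e_i)R_i(s)$; starting from the factorization therefore begs a substantial part of the question. Your later observation that $\Gamma(\langle A,s+e_j\rangle+c)/\Gamma(\langle A,s\rangle+c)$ is rational precisely when $A_j\in\Z$ is correct and is indeed how the integrality of the ${\bf A}_i$ is forced, but it sits downstream of the unproved linear-factorization step. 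Since you yourself defer the ``pigeonholing'' to the original Ore--Sato argument, the proposal amounts to a plausible roadmap rather than a proof; the two nontrivial ingredients --- forcing linear factorization, and assembling the factors into finitely many $\Z$-translation orbits with integer covectors --- are both left to the cited sources.
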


In the article ~\cite{Sato} Appendix (A.3) a precise description of rational
function factor of $U(s)$ is available.

We will call any function of the form~(\ref{oresatocoeff}) {\it
the Ore-Sato coefficient of a hypergeometric series.}  We remark that
in view of the formula
$$
\sin(\pi z)\Gamma(1-z)\Gamma(z)=\pi
$$
an Ore-Sato coefficient can be a function of the form
$$
\varphi(s)=t^{s}\prod_{i\in{\mathbf I}}
\Gamma(\langle{\mathbf A}_{i},s\rangle+c_{i})
\prod_{j\notin{\mathbf I}}
\frac{e^{\pi\sqrt{-1}(\langle{\mathbf A}_{j},s\rangle+c_{j})}}
{\Gamma(1-\langle{\mathbf A}_{j},s\rangle-c_{j})},
$$
where ${\mathbf I}\subset\{1,\dots,m\}$.

Given the above data ($t_i, c_i, {\bf A}_i, U(s)$) that determines
the coefficient of a hypergeometric series, it is straightforward
to compute the rational functions $P_{i}(s)/Q_{i}(s+e_{i})$ using
the $\Gamma$-function identity. The converse requires solving a
system of difference equations which is only solvable under some
compatibility conditions on $P_i,Q_i.$ A careful analysis of this
system of difference equations has been performed
in~\cite{Sadykov-SMZh}.

In this
paper the Ore-Sato coefficient~(\ref{oresatocoeff}) plays the role
of a primary object which generates everything else: the series,
the system of differential equations, the algebraic hypersurface
containing the singularities of its solutions, the amoeba of its
defining polynomial, and, ultimately, the monodromy group of the
hypergeometric system of differential equations. We will also
assume that $m \geq n$ since otherwise the corresponding
hypergeometric series~(\ref{series}) is just a linear combination
of hypergeometric series in fewer variables (times arbitrary
function in remaining variables that makes the system
non-holonomic) and~$n$ can be reduced to meet the inequality.

\begin{definition}\label{def:Horn system}{\rm
{\it The Horn system of an Ore-Sato coefficient.} A (formal)
Laurent series $\sum_{s\in\midZ^n}\varphi(s)x^s$ whose coefficient
satisfies the relations $\varphi(s+e_j)/\varphi(s) =
P_j(s)/Q_j(s+e_j)$ is a (formal) solution to the following system
of partial differential equations of hypergeometric type
\begin{equation}
x_j P_j(\theta)f(x) = Q_j(\theta)f(x), \,\,\, j=1,\ldots,n.
\label{horn}
\end{equation}
Here $\theta=(\theta_1,\ldots,\theta_n),$ $\theta_j =
x_j\frac{\partial}{\partial x_j}.$ The system~(\ref{horn}) will be
referred to as {\it the Horn hypergeometric system defined by the
Ore-Sato coefficient~$\varphi(s)$} (see~\cite{GGR}) and denoted by
${\rm Horn}(\varphi)$.  We shall denote by  $S({\rm
Horn}(\varphi))$ the solution space to  ${\rm Horn}(\varphi)$. In
this paper we treat only holonomic Horn hypergeometric systems if
not otherwise specified i.e. $ {\rm rank} ({\rm Horn}(\varphi))$
is always assumed to be finite. A necessary and sufficient
condition for a system ${\rm Horn}(\varphi)$ to be holonomic has
been established
in~\cite{DMM}, Theorem 6.3.   }
\end{definition}
We will often be dealing with the important special case of an
Ore-Sato coefficient~(\ref{oresatocoeff}) where $t_i=1$ for any
$i=1,\ldots,n$ and $U(s)\equiv 1.$ The Horn system associated with
such an Ore-Sato coefficient will be denoted by ${\rm Horn}(A,c),$
where~$A$ is the matrix with the rows ${\bf A}_1,\ldots, {\bf A}_m \in \Z^n$
and $c=(c_1, \ldots, c_m)\in\C^m.$ In this case the following
operators $P_j(\theta)$ and $Q_j(\theta)$ explicitly determine the
system~(\ref{horn}):
$$ P_j(s) = \prod_{i : A_{i,j}>0} \prod_{\ell_j^{(i)}=0}^{A_{i,j}-1} \left( \langle {\bf A}_{i}, s
\rangle + c_{i} + \ell_j^{(i)}\right), $$
$$ Q_j(s) = \prod_{i : A_{i,j}<0} \prod_{\ell_j^{(i)}=0}^{|A_{i,j}| -1} \left( \langle {\bf A}_{i}, s
\rangle + c_{i} + \ell_j^{(i)}\right). $$

\begin{definition}
\rm The Ore-Sato coefficient~(\ref{oresatocoeff}), the
corresponding hypergeometric series~(\ref{series}), and the
associated hypergeometric system~(\ref{horn}) are called {\it
nonconfluent} if
\begin{equation}
\sum_{i=1}^{m} {\bf A}_{i} = 0.
\label{nonconfluency}
\end{equation}
\end{definition}
It is a well known fact (e.g. \cite{DMM}, Theorem 6.3) that a nonconfluent holonomic hypergeometric system
is a regular holonomic system i.e. every solution admits polynomial growth when approaching its singular loci.

\begin{definition}\label{def:polygon}
\rm {\it The polygon of a nonconfluent Ore-Sato coefficient in two
variables.} Using, if necessary, the Gauss multiplication formula
for the $\Gamma$-function and $N \in \N$,
$$
\Gamma(\langle {\bf A}_i, s \rangle + c_i) =
$$
$$
\frac{N^{ \langle {\bf A}_i,s \rangle + c_i}}{(2 \pi)^{(N-1)/2} \sqrt N} \,  \Gamma \left( \frac{\langle {\bf A}_i,s \rangle + c_i}{N} \right)
\Gamma\left( \frac{\langle {\bf A}_i,s \rangle + c_i + 1}{N} \right) \ldots \Gamma \left( \frac{ \langle {\bf A}_i,s \rangle + c_i + N - 1}{N} \right),
$$
we may without loss of generality assume that for any
$i=1,\ldots,p$ the nonzero components of the vector~${\bf A}_{i}$ are
relatively prime. Let~$l_{i}$ denote the generator of the
sublattice $\{ s\in\Z^{2} : \langle {\bf A}_{i}, s \rangle = 0\}$ and
let~$k_{i}$ be the number of elements in the multiset
$\{{\bf A}_{1},\ldots,{\bf A}_{m}\}$ which coincide with~${\bf A}_{i}.$ The
nonconfluency condition~(\ref{nonconfluency}) implies that there
exists a uniquely determined (up to a translation) integer convex
polygon whose sides are translations of the vectors $k_{i}l_{i},$
the vectors ${\bf A}_{1},\ldots,{\bf A}_{m}$ being the outer normals to its
sides. The number of sides of this polygon coincides with the
number of different elements in the multiset of vectors
$\{{\bf A}_{1},\ldots,{\bf A}_{m}\}.$ We call this polygon {\it the polygon
of the Ore-Sato coefficient~(\ref{oresatocoeff})} and denote it
by~$\mathcal{P}({\varphi}).$
\end{definition}

Conversely, any convex integer polygon determines a
$(m\times2)$-matrix whose rows sum up to the zero vector and
therefore (together with a vector of parameters) a nonconfluent
hypergeometric system of equations. We will denote this system by
${\operatorname{Horn}}(A(\mathcal{P}),c)$. This relation is
illustrated by example~\ref{e4.5}.


\begin{definition}
\label{def:index}
{\rm For a pair of vectors
$(a_1,b_1),$ $(a_2,b_2)\in\Z^2$ we set
$$
\nu(a_1,b_1;a_2,b_2) = \left\{ \begin{array}{ll} \min(|a_1b_2|,|b_1a_2|),
\quad  &
\mbox{if~$(a_1,b_1)$,~$(a_2,b_2)$ are}\\
      &\mbox{in opposite open quadrants of $\Z^2$},\\
0, & \mbox{otherwise}. \end{array} \right.
$$
The number~$\nu(a_1,b_1;a_2,b_2)$ is called the {\em index} associated
with the lattice vectors~$(a_1,b_1)$ and~$(a_2,b_2)$. The index of the
rows of a $2\times 2$ matrix~$M$ will be denoted by $\nu(M).$}
\end{definition}
\begin{definition} \rm
By the {\it initial exponent} of a multiple hypergeometric series
$$
x^{\alpha} \sum\limits_{s\in \midZ^n}
\varphi(s) \, x^{s}
$$
we mean the vector $\alpha=(\alpha_1,\ldots,\alpha_n)\in\C^n.$
Observe that the initial exponent of such a series is only defined
up to shifts by integer vectors.
However, in the view of Proposition~\ref{prop:intertwiningOperators} and
Corollary~\ref{cor:equivalentRepresentations} (to be proved in Section~\ref{sec:atomicHorn})
this is exactly what we need for computing monodromy of hypergeometric systems.
\label{def:initialExponent}
\end{definition}
\begin{definition}
\rm The support of a series solution to~(\ref{horn}) is called
{\it irreducible} if there exists no series solution to~(\ref{horn})
supported in its proper nonempty subset.
\label{def:irreducibleSupport}
\end{definition}
\begin{definition}
\rm A series solution with irreducible support $f(x) =
\sum_{\alpha\in \Lambda} c_\alpha x^{\alpha}$ to a Horn system is
called {\it pure} if for any $\alpha,\beta\in \Lambda$ we have
$\alpha = \beta\mod\Z^n.$ In other words, a series (in particular,
a polynomial) solution centered at the origin and with irreducible
support is called pure if it is given by the product of a monomial
and a Laurent series. A set of linearly independent
series~${\{f_{k}(x)\}}_{k=1}^r$ is called a {\it pure basis} of
the solution space of a Horn system in a neighborhood of a
nonsingular point $x\in\C^n$ if every~$f_k$ converges at~$x,$ is a
pure solution and together they span a linear space whose
dimension equals the holonomic rank of the Horn system.
\label{def:pureSolution}
\end{definition}
Since a Horn system has polynomial coefficients, it follows that
any of the Puiseux series solutions to a holonomic Horn system
can be written as a finite linear combination of pure solutions
to the same system of equations.
Here the holonomic property is necessary to ensure that the linear
combination is finite. Moreover, in a neighborhood of a
nonsingular point, a pure basis in the local solution space of a
Horn system is defined uniquely up to permutation and
multiplication of its elements with nonzero constants. In this
paper we will neglect this unessential difference between pure
bases of solutions to hypergeometric systems. If necessary, we
will explicitly specify the ordering of the elements of the pure
basis and the way they are normalized. The pure basis of a
hypergeometric system is especially convenient for computing
monodromy since, within the domain of convergence of the basis
series, the monodromy matrices are diagonal.
\begin{definition}
\rm
A Puiseux polynomial solution to the hypergeometric system Horn($A,c$) is called {\it persistent}
if its support remains finite under arbitrary small perturbations of the vector of parameters~$c.$
\label{def:PersistentPolynomialSolution}
\end{definition}
For instance, the first solution to the hypergeometric system
(\ref{horn(1,2)(-1,-1),(0,-1)}) is a persistent Puiseux monomial
since it remains monomial for any $(c_1,c_2,c_3) \in\C^3.$ The second
solution to (\ref{horn(1,2)(-1,-1),(0,-1)}) is a (Puiseux)
polynomial only for $-(c_1+c_2+c_3) \in \N$ and it is therefore not a
persistent polynomial solution. The notion is also illustrated in
Examples~\ref{ex(1,1)(1,-2),(-2,1)},
\ref{ex(1,2)(-1,-2),(-1,1),(1,-1),(-3,-2),(3,2),(2,-1),(-2,1)}
and~\ref{ex(2,-1)(2,-1),(-2,1),(-1,3),(-1,3),(1,-3),(1,2),(-1,-2),(-1,-2)}.

We will denote the linear space of all (not necessarily
persistent) Puiseux polynomial solutions to the Horn system
defined by the Ore-Sato coefficient $\varphi(s)$
by~$\Psi(\varphi)$ and use the notation~~$\Psi_{0}(\varphi)$ for
the space of all persistent polynomial solutions to this system.
The following is an immediate consequence of
Definition~\ref{def:PersistentPolynomialSolution}.
\begin{proposition}
For an Ore-Sato coefficient $\varphi$ defined by~(\ref{oresatocoeff}) with generic vector $c=(c_1,\ldots, c_m) \in \C^m$
of parameters every Puiseux polynomial solution
to the corresponding hypergeometric system {\rm Horn}($\varphi$) is persistent.
That is to say, $\Psi(\varphi)=\Psi_{0}(\varphi)$ as long as~$c$ is generic.
\end{proposition}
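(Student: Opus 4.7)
The plan is to prove that the locus of parameters $c \in \C^m$ admitting a non-persistent Puiseux polynomial solution to Horn($\varphi$) is contained in a countable union of proper algebraic subvarieties of $\C^m$; generic $c$ therefore avoids this locus, forcing $\Psi(\varphi) = \Psi_0(\varphi)$. By the remark following Definition~\ref{def:pureSolution}, every Puiseux polynomial solution is a finite linear combination of pure ones, and persistence of the sum is equivalent to persistence of each pure summand. I would therefore reduce immediately to the case where $f = \sum_{\alpha \in \Lambda} c_\alpha x^\alpha$ is a pure polynomial solution with $\Lambda$ a finite subset of a translate $\alpha_0 + \Z^n$ of the lattice.

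The main step is a boundary analysis. The hypergeometric recursion $Q_j(\alpha + e_j)\, c_{\alpha + e_j} = P_j(\alpha)\, c_\alpha$ together with the finiteness of $\Lambda$ forces $P_j(\alpha) = 0$ at every point $\alpha \in \Lambda$ with $\alpha + e_j \notin \Lambda$ and $c_\alpha \neq 0$, and analogously $Q_j(\alpha + e_j) = 0$ at each boundary point where the series would otherwise be forced to extend into $\Lambda$ from outside. Using the explicit factorization of $P_j$ and $Q_j$ as products of linear forms $\langle {\bf A}_i, s \rangle + c_i + \ell$, these vanishings translate into a finite system of affine-linear equations in the joint unknowns $(\alpha_0, c) \in \C^n \times \C^m$. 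Eliminating $\alpha_0$ yields a finite system of affine-linear conditions on $c$ alone. A clean dichotomy then arises: either this system is identically satisfied, in which case $\alpha_0 = \alpha_0(c)$ depends affinely on $c$ and the polynomial solution deforms smoothly with $c$ as a persistent solution; or the system cuts out a proper affine subvariety $V \subsetneq \C^m$, and the polynomial solution exists only for $c \in V$, hence is non-persistent.

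Taking the union over all combinatorial shapes of $\Lambda - \alpha_0 \subset \Z^n$ (a countable collection of finite sets) and over the finitely many admissible assignments of boundary-defining linear factors per shape, the non-persistence locus is exhibited as a countable union of proper algebraic subvarieties of $\C^m$. On its complement --- a dense Zariski-open subset of $\C^m$ --- every polynomial solution is persistent, i.e.\ $\Psi(\varphi) = \Psi_0(\varphi)$. The main obstacle I anticipate is making the dichotomy above rigorous: one must exclude the possibility that a nontrivial algebraic condition on $c$ is satisfied on a Zariski-open subset. This relies on the fact that each factor $\langle {\bf A}_i, \alpha \rangle + c_i + \ell$ is of degree one in $c$, which guarantees that an individual vanishing is either vacuous or cuts out a hyperplane, and this dichotomy survives Gaussian elimination of $\alpha_0$ because the resulting reduced system remains affine-linear in $c$.
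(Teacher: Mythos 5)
Your argument is correct and matches the paper's (implicit) one: the paper labels this proposition an immediate consequence of Definition~\ref{def:PersistentPolynomialSolution} without giving a proof at the proposition itself, but the substantive justification appears inside the proof of Theorem~\ref{thm:decompositionOfTheSolutionSpace}(1), where the authors observe that a polynomial solution's exponents must satisfy a finite affine-linear system whose right-hand sides are affine in $c$, so that for generic $c$ the system is nondegenerate and the polynomial deforms smoothly as a persistent solution. Your reduction to pure summands, the boundary-vanishing analysis forcing $P_j(\alpha)=0$ or $Q_j(\alpha)=0$ at support edges, the case split over which linear factor vanishes, and the elimination of $\alpha_0$ fill in carefully and correctly exactly the details the paper elides.
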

The next proposition is proved by analysis of the difference equations satisfied by
the coefficient of a hypergeometric polynomial (see \cite{DMS}).
\begin{proposition}
Let~$\varphi(s)$ be an Ore-Sato coefficient and let~$f(x)$ be a
Puiseux polynomial solution to {\rm Horn($\varphi$)}. If this
polynomial solution is persistent then there exists a multi-index
$I= \{i_1,\ldots, i_n \} \subset \{1,\ldots,m\}$ with different
components such that for any $s \in {\rm supp} f$ and any $\ell =
1,\ldots,n$ there exists $j \in I$ and $k \in \{0,\ldots,|A_{j,
\ell}| - 1 \}$ such that $\langle {\bf A}_j,s \rangle + c_j + k = 0$.
\label{prop:persistentPolSol}
\end{proposition}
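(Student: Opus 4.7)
My plan is to extract the structural constraints on $\mathrm{supp}\,f$ from the difference equations satisfied by $\varphi$ and to use persistence to isolate $n$ special Gamma factors whose indices will form $I$.

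The recursion $\varphi(s+e_\ell)Q_\ell(s+e_\ell)=\varphi(s)P_\ell(s)$ forces, at every $s\in S:=\mathrm{supp}\,f$ lying on the forward boundary of $S$ in some direction $e_\ell$, the vanishing $P_\ell(s)=0$, and dually $Q_\ell(s)=0$ at the backward boundary. By the explicit factorisations displayed after Definition~\ref{def:Horn system}, each such vanishing amounts to an identity
\[
\langle {\bf A}_j,s\rangle+c_j+k=0
\]
with $A_{j,\ell}\ne 0$ and $k\in\{0,\ldots,|A_{j,\ell}|-1\}$.

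Next I would exploit persistence. Write the deformed family as $f_c(x)=\sum_{t\in S_0}\varphi_c(\alpha(c)+t)\,x^{\alpha(c)+t}$ with a fixed lattice shape $S_0\subset\Z^n$ and a continuously varying initial exponent $\alpha(c)\in\C^n$. Among the vanishing identities produced at the various extreme points of $S$, select $n$ that are linearly independent in $\alpha$; these pin down $\alpha(c)$ and take the form
\[
\langle {\bf A}_{i_\ell},\alpha(c)\rangle+c_{i_\ell}+k_\ell=0,\qquad \ell=1,\ldots,n.
\]
Linear independence of ${\bf A}_{i_1},\ldots,{\bf A}_{i_n}$ forces the indices $i_1,\ldots,i_n$ to be pairwise distinct, giving the desired set $I=\{i_1,\ldots,i_n\}$. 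If a vanishing identity $\langle {\bf A}_j,\alpha(c)+t\rangle+c_j+k=0$ with $j\notin I$ held at some $s=\alpha(c)+t\in S$, then after substituting $\alpha(c)$ via the above $n$ relations it would become a nontrivial polynomial identity in the independent parameters $c_{i_1},\ldots,c_{i_n},c_j$, which fails on an open set of $c$'s and contradicts persistence. Hence every termination index lies in $I$, and for each $\ell$ the slab
\[
\{s\in\mathrm{supp}\,f:\langle {\bf A}_{i_\ell},s\rangle+c_{i_\ell}\in\{-|A_{i_\ell,\ell}|+1,\ldots,0\}\}
\]
carved out by the forward and backward termination conditions in direction $e_\ell$ contains all of $S$, which is exactly the claimed statement.

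The main obstacle is turning the qualitative assumption ``$f_c$ stays a Puiseux polynomial for $c$ near $c^\ast$'' into the rigid algebraic conclusion ``every boundary vanishing of $P_\ell$ or $Q_\ell$ uses an index of $I$''. This is the heart of the difference-equation analysis of \cite{DMS}, and it is also the reason the cardinality of $I$ is forced to equal~$n$: the initial exponent lives in $\C^n$ and requires exactly $n$ linearly independent linear conditions to be determined.
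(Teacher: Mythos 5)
The paper gives no proof of this proposition beyond a citation to \cite{DMS}, so there is no argument in the paper itself to compare against; your attempt is a reconstruction of the difference--equation analysis alluded to there. Your opening moves are correct: the recursion $\varphi(s+e_\ell)Q_\ell(s+e_\ell)=\varphi(s)P_\ell(s)$ does force a linear factor of $P_\ell$ (resp.\ $Q_\ell$) to vanish at every forward (resp.\ backward) boundary point of $\mathrm{supp}\,f$ in direction $e_\ell$, and the genericity argument --- substituting $\alpha(c)$ into a putative vanishing with $j\notin I$ to obtain a non-trivial affine relation among the free parameters $c_{i_1},\ldots,c_{i_n},c_j$ that fails on an open set --- is the right mechanism for attributing all boundary terminations to a single $n$-element index set~$I$.

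The final step, however, is a genuine gap. You conclude that for each $\ell$ the slab $\{s:\langle {\bf A}_{i_\ell},s\rangle+c_{i_\ell}\in\{-|A_{i_\ell,\ell}|+1,\ldots,0\}\}$ ``carved out by the forward and backward termination conditions'' contains all of $S$, but this does not follow from what precedes. For a given $\ell$ the forward termination necessarily comes from a row with $A_{j,\ell}>0$ and the backward termination from a row with $A_{j,\ell}<0$; these are two distinct rows of $A$, so they cannot both be charged to one index $i_\ell$ and do not jointly carve a single slab of width $|A_{i_\ell,\ell}|$. More importantly, the boundary vanishings control only the extreme points of $\mathrm{supp}\,f$; nothing in your argument forces an \emph{interior} point of the support to satisfy any of the $m$ linear identities, whereas the proposition asserts such an identity at every $s\in\mathrm{supp}\,f$ and every $\ell$. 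Bridging that gap --- showing that the lattice width of $\mathrm{supp}\,f$ in each ${\bf A}_j$-direction is small enough that the boundary conditions propagate inward --- is exactly the non-trivial bookkeeping of \cite{DMS} that your sketch omits. Two secondary omissions: the claim that persistence yields a deformation with ``a fixed lattice shape $S_0$'' and a smoothly varying $\alpha(c)$, and the assertion that $n$ linearly independent boundary constraints exist among the vanishings, are both plausible but stated without proof, and they are load-bearing for the genericity argument that follows them.
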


\begin{definition}
\rm We say that the Ore-Sato coefficient
$\varphi(s) =  \prod_{i=1}^{m} \Gamma(\langle {\bf A}_{i}, s \rangle + c_{i})$
(as well as the corresponding hypergeometric system Horn($\varphi(A,c)$) is {\it resonant}
if there exists a multi-index $I=(i_1,\ldots,i_k)$ with $1\leq i_1 < \ldots < i_k \leq m,$
$1\leq k \leq m$ such that for any linear relation $a_{i_1} {\bf A}_{i_1} + \ldots + a_{i_k} {\bf A}_{i_k} = 0$
with integer and relatively prime coefficients $a_{i_1},\ldots,a_{i_k}\in\Z$
we have $a_{i_1} c_{i_1} + \ldots + a_{i_k} c_{i_k}\in\Z.$
The system Horn($\varphi(A,c)$) is called {\it maximally resonant} if the above holds for
any multi-index $I=(i_1,\ldots,i_k)$ such that the corresponding integer vectors ${\bf A}_{i_1},\ldots,{\bf A}_{i_k}$
are linearly dependent.
\label{def:ResonantHornSystem}
\end{definition}
The notion of resonance is illustrated by the following example that is based on
a hypergeometric system of the smallest possible rank.
\begin{example}
\rm
To simplify the notation, here and throughout the paper we will define a system
of linear homogeneous differential equations by giving the set of its generating operators.
The Horn system
\begin{equation}
\left\{
\begin{array}{l}
x_1 (\theta_1  + \theta_2 + c_3) - (\theta_1 + c_1), \\
x_2 (\theta_1  + \theta_2 + c_3) - (\theta_2 + c_2)
\end{array}
\right.
\label{horn(1,1),(-1,0),(0,-1)}
\end{equation}
is the only (up to a monomial change of variables defined by a
unimodular matrix) bivariate hypergeometric system whose holonomic
rank equals~1 for all values of its parameters $c_1,c_2,c_3 \in \C.$ The
only solution to this system is $x_1^{-c_1} x_2^{-c_2} (1 - x_1 -
x_2)^{c_1+c_2-c_3}.$ It is resonant (and maximally resonant as well, since
it has holonomic rank~1) if and only if $c_1+c_2-c_3 \in \Z.$ The
monodromy of~(\ref{horn(1,1),(-1,0),(0,-1)}) only depends on the
values of $a,b,c$ modulo~$\Z$ and is the subgroup of $\C$ with the
three generators $\{ \exp(2\pi \sqrt{-1}\, c_1), \exp(2\pi
\sqrt{-1}\, c_2), \exp(2\pi \sqrt{-1}\, c_3) \}$ in non-resonant case,
while it has less than two generators in resonant case (if the group is
not trivial). \label{ex(1,1)(1,0)(0,1)}
\end{example}
The crucial importance of the notion of resonance will be revealed in the theorems and examples that follow.
Roughly speaking, nonresonant parameters of a hypergeometric system mean that
any of its solutions is either a fully supported series (centered at the origin) or a persistent
Puiseux polynomial. Resonant parameters may correspond to non-holonomic systems, systems with non-persistent polynomial
solutions, non-fully supported series solutions or, possibly, logarithmic solutions which do
not admit any expansions into Puiseux series (centered at the origin) at all.
 For instance, the hypergeometric system~\eqref{horn(1,0)(0,1)(1,1)(-1,0)^2(0,-1)^2} is
maximally resonant.

\begin{definition}
\rm A solution $f(x)$ to the system of differential equations
Horn($\varphi$) at a nonsingular point $x^{(0)}\in\C^n$ is said to
generate a linear subspace $L \subset S({\rm Horn}(\varphi))|_{
V(x^{(0)})}$ of the space of all holomorphic solutions to
Horn($\varphi$) in a simply connected neighbourhood $V(x^{(0)})$
if every element of $L$ can be represented as a linear combination
of branches of $f(x)$ on $V(x^{(0)}).$ We will say that $f(x)$ is
a {\it generating solution} of $L.$ A function is called a
generating solution to a system of equations if it generates the
whole space of its holomorphic solutions at any nonsingular point.
In Section~\ref{sec:polynomialBases} we will construct generating
solutions for two families of hypergeometric systems
(Proposition~\ref{prop:simplicialSolution},
Proposition~\ref{prop:parallelepiped}).
\label{def:generatingSolution}
\end{definition}
\begin{example}
\rm
The maximally resonant Horn system defined by the Ore-Sato coefficient
$\varphi(s) = \Gamma(s_1)\Gamma(s_2) \Gamma(s_1+s_2) \Gamma(-s_1)^2 \Gamma(-s_2)^2$
is given by
\begin{equation}
\left\{
\begin{array}{l}
x_1 \, \theta_1 (\theta_1 + \theta_2) - \theta_{1}^2, \\
x_2 \, \theta_2 (\theta_1 + \theta_2) - \theta_{2}^2.
\end{array}
\right.
\label{horn(1,0)(0,1)(1,1)(-1,0)^2(0,-1)^2}
\end{equation}
This system has holonomic rank 4. Its space of holomorphic
solutions is spanned by $1,\, \log{x_1}, \, \log{x_2}, \,
\log{x_1} \log{x_2} + {\rm PolyLog}(2,x_1)$ $ + {\rm
PolyLog}(2,x_2).$ Here ${\rm PolyLog}$ $( 2, z ) =
\sum_{k=1}^{\infty} z^k/k^2.$ The resultant of the principal
symbols of (\ref{horn(1,0)(0,1)(1,1)(-1,0)^2(0,-1)^2}) equals
$x_1x_2(x_1-1)(x_2-1)(x_1+x_2-1).$ Using the properties of ${\rm
PolyLog}(2,z)$ (see~\cite{higherLogarithms}), we conclude that the
monodromy group of~(\ref{horn(1,0)(0,1)(1,1)(-1,0)^2(0,-1)^2}) is
generated by the four matrices
$$
M_{x_1=0}=
\left(
\begin{array}{cccc}
 1 & 0 & 2\pi \sqrt{-1}  & 0               \\
 0 & 1 & 0               & 2\pi \sqrt{-1}  \\
 0 & 0 & 1               & 0               \\
 0 & 0 & 0               & 1
\end{array}
\right), \quad
M_{x_2=0}=
\left(
\begin{array}{cccc}
 1 & 2\pi \sqrt{-1} & 0      & 0               \\
 0 & 1              & 0      & 0               \\
 0 & 0              & 1      & 2\pi \sqrt{-1}  \\
 0 & 0              & 0      & 1
\end{array}
\right),
$$
$$
M_{x_1=1}=
\left(
\begin{array}{cccc}
 1 & -2\pi \sqrt{-1}     & 0 & 0 \\
 0 & 1                   & 0 & 0 \\
 0 & 0                   & 1 & 0 \\
 0 & 0                   & 0 & 1
\end{array}
\right), \quad
M_{x_2=1}=
\left(
\begin{array}{cccc}
 1 & 0      & -2\pi \sqrt{-1} & 0      \\
 0 & 1      & 0               & 0      \\
 0 & 0      & 1               & 0      \\
 0 & 0      & 0               & 1
\end{array}
\right).
$$
This monodromy representation shows that $\log{x_1} \log{x_2} + {\rm PolyLog}(2,x_1) + {\rm PolyLog}(2,x_2)$
is a generating solution of $S({\rm Horn}(\varphi)).$
\label{ex(1,0)(0,1)(1,1)(-1,0)^2(0,-1)^2}
\end{example}
If the monodromy representation of the entire solution space
$S({\rm Horn} (\varphi))$ is irreducible then it admits a
generating solution. On the other hand, the monodromy
representation can be reducible for $S({\rm Horn}(\varphi))$ with
a generating function as the above
Example~\ref{ex(1,0)(0,1)(1,1)(-1,0)^2(0,-1)^2} illustrates.

The main result in the paper
(Theorem~\ref{thm:zonotopesHavePolBasis}) describes bivariate
hypergeometric systems whose solution spaces split into
one-dimensional invariant subspaces. Throughout the paper, we will
adopt the following definition.
\begin{definition} \rm
We will say that the monodromy representation of a system of equations
is {\it maximally reducible} if its solution space splits into a direct
sum of one-dimensional invariant subspaces.
\label{def:maxReducibleMonodromy}
\end{definition}


\section{The structure of the space of holomorphic solutions to a Horn system}
\label{sec:atomicHorn}

\subsection{Integral representations and calculation of multidimensional residues.}
Our main tool for computing analytic continuation of a hypergeometric series is the
Mellin-Barnes integral. The following theorem gives an integral
representation for solutions to a hypergeometric system.

\begin{theorem} {\rm (See~\cite{Sadykov-SMZh}).}
Let  $$ \psi(s)=\prod_{j=1}^m \Gamma(\langle {\bf A}_j,s \rangle + c_j)$$ be a nonconfluent Ore-Sato coefficient.
Let us put $\varphi(s) = \phi(s) \psi(s) $, where $\phi(s)$ is a periodic meromorphic function with the period~1 in every coordinate direction.
Then the Mellin-Barnes integral
\begin{equation}
MB(\varphi,{\mathcal C}):= \int\limits_{\mathcal C} \varphi(s) \, x^s ds
\label{MBSolution}
\end{equation}
represents a solution to ${\rm Horn}(A,c).$ Here~${\mathcal C}$ is any
$n$-dimensional contour which is homologous to its unitary
shifts in any real direction in the complement of the singularities
of the integrand in~(\ref{MBSolution}).
\label{thm:generalMBIntegralRepresenationForHorn}
\end{theorem}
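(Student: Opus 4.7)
The plan is to verify directly that each Horn operator $x_j P_j(\theta) - Q_j(\theta)$ annihilates $MB(\varphi,\mathcal{C})$. The essential ingredients are three: (i) the monomial $x^s$ is a joint eigenfunction of the Euler operators, $\theta_k x^s = s_k x^s$, so that $R(\theta)x^s = R(s)x^s$ for any polynomial $R$; (ii) the contour $\mathcal{C}$ can be replaced by the shifted contour $\mathcal{C}+e_j$ without changing the value of the integral, by the assumed homology condition; and (iii) the Ore–Sato coefficient satisfies the functional equation $\varphi(s+e_j)/\varphi(s)=P_j(s)/Q_j(s+e_j)$.

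First I would justify bringing the operators $x_j P_j(\theta)$ and $Q_j(\theta)$ inside the integral sign, so that
\begin{equation*}
\bigl[x_j P_j(\theta)-Q_j(\theta)\bigr]MB(\varphi,\mathcal{C})
= \int_{\mathcal{C}} P_j(s)\varphi(s)\,x^{s+e_j}\,ds
- \int_{\mathcal{C}} Q_j(s)\varphi(s)\,x^{s}\,ds.
\end{equation*}
In the first integral I perform the change of variables $s\mapsto s-e_j$, turning it into an integral of $P_j(s-e_j)\varphi(s-e_j)\,x^s$ over $\mathcal{C}+e_j$; by the homological shift hypothesis, the contour can be put back to $\mathcal{C}$ at no cost. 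Both terms then appear as integrals over $\mathcal{C}$ with the same factor $x^s$, and the result reduces to the pointwise identity
\begin{equation*}
P_j(s-e_j)\,\varphi(s-e_j)=Q_j(s)\,\varphi(s),
\qquad\text{i.e.}\qquad \frac{\varphi(s+e_j)}{\varphi(s)}=\frac{P_j(s)}{Q_j(s+e_j)}.
\end{equation*}

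To establish this identity I would use $\Gamma(z+1)=z\Gamma(z)$ factor by factor. The periodicity $\phi(s+e_j)=\phi(s)$ kills the $\phi$ contribution, and for each $i$ the ratio $\Gamma(\langle {\bf A}_i,s\rangle+c_i+A_{i,j})/\Gamma(\langle {\bf A}_i,s\rangle+c_i)$ is a product of $|A_{i,j}|$ linear factors in $s$, placed in the numerator when $A_{i,j}>0$ and in the denominator when $A_{i,j}<0$. Grouping by sign of $A_{i,j}$ reproduces precisely $P_j(s)$ in the numerator and $Q_j(s+e_j)$ in the denominator, matching the construction of the operators given after Definition~\ref{def:Horn system}.

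The main technical obstacle is not algebraic but analytic: legitimising the interchange of differentiation and integration, and ensuring that the contour shift $\mathcal{C}\rightsquigarrow\mathcal{C}+e_j$ truly avoids all singularities of $\varphi(s)x^s$. This is where the nonconfluency hypothesis $\sum_i {\bf A}_i=0$ enters, since it controls the growth of $\prod_i \Gamma(\langle {\bf A}_i,s\rangle+c_i)$ by Stirling's formula along the imaginary directions and prevents new poles from sweeping across the region bounded by $\mathcal{C}$ and $\mathcal{C}+e_j$. I would handle this by referring to the convergence analysis in~\cite{Sadykov-SMZh} rather than redoing it, and then assemble the three ingredients into the identity $[x_jP_j(\theta)-Q_j(\theta)]MB(\varphi,\mathcal{C})=0$ for each $j=1,\dots,n$.
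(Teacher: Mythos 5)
Your direct verification is correct: moving the Horn operators under the integral sign, performing the substitution $s\mapsto s-e_j$, invoking the homology hypothesis to return the shifted contour to $\mathcal{C}$, and reducing to the pointwise identity $P_j(s-e_j)\varphi(s-e_j)=Q_j(s)\varphi(s)$, which holds factor-by-factor via $\Gamma(z+1)=z\Gamma(z)$ together with the periodicity of $\phi$. You also correctly isolate the genuine analytic content, namely justifying the interchange of integration with the Euler operators and the contour shift, and note that this is where nonconfluency and Stirling asymptotics enter. The paper does not reproduce a proof of this theorem (it cites \cite{Sadykov-SMZh}), but the remark immediately after it states that the result is proved ``by computing multidimensional residues at simple singularities,'' which suggests the original argument first converts the integral into a residue sum, i.e.\ a Puiseux series, and then checks the coefficient recurrence. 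Your approach sidesteps that conversion: it is a direct operator-level verification that works for any contour $\mathcal{C}$ satisfying the homology hypothesis, without first needing to know that $\mathcal{C}$ can be closed up as a sum of residues, whereas the residue route is more tightly tied to extracting the series solutions that appear in Proposition~\ref{prop:seriesToMBIntegral} and Theorem~\ref{thm:MBSolutionToHorn}. The two routes are equivalent in substance, and yours is the one most readers of the Mellin--Barnes literature would expect.
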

The next proposition is proved, like the previous theorem, by
computing multidimensional residues at simple singularities. It
allows one to convert a multiple hypergeometric series into an
iterated Mellin-Barnes integral.
\begin{proposition}
Let $\psi(k)/k!$ be a nonconfluent Ore-Sato coefficient with
generic parameters, $A\in GL(n,\Z)$ an integer nondegenerate
square matrix with the rows ${\bf A}_1,\ldots,{\bf A}_n$ and $\alpha\in\C^n.$
For a sufficiently small $\varepsilon > 0$ and $k\in\N^n$ let
$\tau(k)=\{s\in\C^n : | \langle {\bf A}_j, s \rangle + \alpha_j + k_j| =
\varepsilon, {\rm\ for\ any\ } j=1,\ldots,n \}$ and define
$\mathcal{C}=\sum\limits_{k\in\midN^n} \tau(k).$ Then
$$
\sum\limits_{k\in\midN^n} \frac{{(-1)}^{|k|}}{k!} \psi(k) \,
x^{Ak+\alpha} = \frac{1}{(2\pi \sqrt{-1})^n |A|}
\int\limits_{\mathcal{C}} \prod_{j=1}^{n} \Gamma( {(-A^{-1}(s -
\alpha))}_j ) \, \psi(A^{-1}(s-\alpha)) \, x^s ds.
$$
\label{prop:seriesToMBIntegral}
\end{proposition}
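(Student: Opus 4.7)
The plan is to realize the right-hand side as a sum of multidimensional residues of the meromorphic form under the integral sign and then match these residues, one by one, with the summands of the series on the left.

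I would begin by making the invertible linear change of coordinates $v=-A^{-1}(s-\alpha)$; its Jacobian has absolute value $|A|$, and so cancels the factor $|A|$ in the prefactor $\frac{1}{(2\pi\sqrt{-1})^{n}|A|}$. In the new variables the integrand takes the transparent product form
$$
\prod_{j=1}^{n}\Gamma(v_{j})\cdot \psi(-v)\cdot x^{\alpha-Av},
$$
and the image of the contour $\mathcal{C}$ is a countable sum, indexed by $k\in\N^{n}$, of small product tori of polyradius $\varepsilon$ centered at the points $v=-k$. These centers are exactly the transversal intersections of the simple polar divisors $v_{j}=-k_{j}$ of the $\Gamma$-factors.

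Because the parameters of $\psi$ are assumed generic, the factor $\psi(-v)\cdot x^{\alpha-Av}$ is holomorphic on a neighbourhood of each point $v=-k$, so every such point is a simple $n$-fold pole of the integrand and no other poles lie on the tori. Applying the iterated Cauchy (Grothendieck) residue formula and invoking the elementary identity $\mathrm{Res}_{v_{j}=-k_{j}}\Gamma(v_{j})=(-1)^{k_{j}}/k_{j}!$ yields a local contribution of
$$
(2\pi\sqrt{-1})^{n}\,\frac{(-1)^{|k|}}{k!}\,\psi(k)\,x^{Ak+\alpha}
$$
at each $v=-k$. Summing over $k\in\N^{n}$ and dividing by $(2\pi\sqrt{-1})^{n}$ reproduces exactly the series on the left-hand side.

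The chief technical step, and the main obstacle in the proof, is to justify the termwise application of the residue theorem: one must verify that no further singularities of the integrand intersect the tori $\tau(k)$ for small enough $\varepsilon$, and that the series of local integrals $\sum_{k}\oint_{\tau(k)}$ converges and agrees with $\oint_{\mathcal{C}}$. This is where the nonconfluency hypothesis on $\psi(k)/k!$ enters: combined with Stirling's estimate it provides decay bounds on the integrand along large shifts of $v$, enabling a dominated-convergence argument. Once these estimates are in place the identity follows immediately from the residue calculation sketched above.
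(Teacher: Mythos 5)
Your proposal is correct and coincides with the paper's intended argument: the paper offers no written proof beyond the remark that the proposition ``is proved, like the previous theorem, by computing multidimensional residues at simple singularities,'' and your computation fills in exactly those details — the linear change of variables $v=-A^{-1}(s-\alpha)$ whose Jacobian absorbs the $|A|$, the identification of the simple $n$-fold poles at $v=-k$, and the residue identity $\mathrm{Res}_{v_j=-k_j}\Gamma(v_j)=(-1)^{k_j}/k_j!$ which together with $\psi(-v)\big|_{v=-k}=\psi(k)$ and $x^{\alpha-Av}\big|_{v=-k}=x^{Ak+\alpha}$ reproduces the general term of the series. Your closing caveat about justifying the termwise residue evaluation (nonconfluency plus Stirling giving the needed decay) is also the right thing to flag; the paper delegates this entirely to the analogy with Theorem~\ref{thm:generalMBIntegralRepresenationForHorn} and the reference~\cite{Sadykov-SMZh}.

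One small point worth noting for your own records: as literally written, the contour description $\tau(k)=\{s:|\langle \mathbf{A}_j,s\rangle+\alpha_j+k_j|=\varepsilon\}$ is not consistent with the location of the poles of $\Gamma\bigl((-A^{-1}(s-\alpha))_j\bigr)$, which sit at $s=\alpha+Ak$ (it appears to have been carried over from the setup of Theorem~\ref{thm:MBSolutionToHorn} without adjustment). Your reading — that $\tau(k)$ becomes the product torus $\{|v_j+k_j|=\varepsilon\}$ after the substitution — is clearly the intended one, and your proof is the correct one for that reading.
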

The following theorem gives a solution to the hypergeometric system
${\rm Horn}(A, \alpha)$ in the form of a multiple Mellin-Barnes
integral and allows one to convert it into a hypergeometric (Puiseux)
series by computing the residues at a distinguished family of
singularities of the integrand.
\begin{theorem}
{\rm (See~\cite{Sadykov-SMZh}).} Let~$A$ be a $m\times n$ integer
matrix of full rank~$n$ with the rows ${\bf A}_1,\ldots,{\bf A}_m$ and let
$I=(i_1,\ldots,i_n)\subset\{1,\ldots,m\}$ be a multi-index such
that the matrix~${\bf A}_I$ with the rows ${\bf A}_{i_1},\ldots,{\bf A}_{i_n}$ is
nondegenerate. For a sufficiently small $\varepsilon>0$ and
$k\in\N^n$ let $\tau_I(k)=\{s\in\C^n : | \langle {\bf A}_{i_j}, s
\rangle + \alpha_{i_j} + k_j| = \varepsilon, {\it\ for\ any\ }
j=1,\ldots,n \}$ and define
$\mathcal{C}_I=\sum\limits_{k\in\midN^n} \tau_I(k).$ Then for
generic $\alpha\in\C^m$ and $\alpha_I=(\alpha_{i_1},\ldots,
\alpha_{i_n})$ the following Mellin-Barnes integral satisfies the
system of equations ${\rm Horn}(A, \alpha)$ and can be represented
in the form of a hypergeometric (Puiseux) series:
\begin{equation}
\frac{1}{(2\pi \sqrt{-1})^n} \int\limits_{\mathcal{C}_I}
\prod_{j=1}^{m} \Gamma(\langle {\bf A}_j, s \rangle + \alpha_j) \, x^s
ds
\label{MBSolutionToHorn}
\end{equation}
$$ = \sum\limits_{k\in\midN^n} \frac{{(-1)}^{|k|}}{k! |{\bf A}_I|}
\prod_{j\not\in I} \Gamma(\langle {\bf A}_j, - {\bf A}_{I}^{-1} (k+\alpha_I)
\rangle + \alpha_j) \, x^{- {\bf A}_{I}^{-1} (k+\alpha_I)}.$$
\label{thm:MBSolutionToHorn}
\end{theorem}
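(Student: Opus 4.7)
The plan is to evaluate the left-hand side by iterated residues, using that by construction $\mathcal{C}_I=\sum_{k\in\midN^n}\tau_I(k)$ is a sum of small $n$-tori, each encircling precisely one simple joint pole of the $n$ distinguished Gamma factors $\Gamma(\langle {\bf A}_{i_j},s\rangle+\alpha_{i_j})$, $j=1,\dots,n$. First I would perform the linear change of variables $u=\mathbf{A}_I s+\alpha_I$, i.e. $u_j=\langle{\bf A}_{i_j},s\rangle+\alpha_{i_j}$. Since $\mathbf{A}_I\in GL(n,\R)$, this is a biholomorphism of $\C^n$ with Jacobian $\det\mathbf{A}_I$, so $ds=du/\det\mathbf{A}_I$ and, with the orientation of $\mathcal{C}_I$ chosen accordingly, the scalar $|\mathbf{A}_I|$ appearing in the denominator of the claimed formula is accounted for. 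In the new coordinates the poles $\tau_I(k)$ become the standard tori $\{|u_j+k_j|=\varepsilon\}$ around $u=-k$, $k\in\N^n$, and $s=\mathbf{A}_I^{-1}(u-\alpha_I)$ is holomorphic there.

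Second, I would compute the local residue at each $u=-k$. For generic $\alpha\in\C^m$ the remaining factors $\prod_{j\not\in I}\Gamma(\langle{\bf A}_j,s\rangle+\alpha_j)$ and the monomial $x^s$ are holomorphic in a neighbourhood of $s=-\mathbf{A}_I^{-1}(k+\alpha_I)$, so the only polar behaviour comes from $\prod_{j=1}^n\Gamma(u_j)$. The iterated Cauchy formula then yields
$$
\mathrm{Res}_{u=-k}\Bigl(\prod_{j=1}^n\Gamma(u_j)\Bigr)=\prod_{j=1}^n\frac{(-1)^{k_j}}{k_j!}=\frac{(-1)^{|k|}}{k!},
$$
and multiplying by the values of the non-distinguished factors and of $x^s$ at $s=-\mathbf{A}_I^{-1}(k+\alpha_I)$ produces exactly the $k$-th summand on the right-hand side, with the factor $(2\pi\sqrt{-1})^n/|\mathbf{A}_I|$ coming from the residue theorem together with the Jacobian.

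Third, summing the contributions of all tori $\tau_I(k)$ and dividing by $(2\pi\sqrt{-1})^n$ gives the claimed Puiseux-series identity, provided the series converges in a nonempty open set. Convergence in a suitable region of $x$-space follows from Stirling's asymptotic for the remaining Gamma factors applied along the rays $k\mapsto-\mathbf{A}_I^{-1}(k+\alpha_I)$, combined with the explicit $(-1)^{|k|}/k!$ factor; this is the standard computation that identifies the Newton polytope of the series. Finally, that the integral itself satisfies $\mathrm{Horn}(A,\alpha)$ is simply Theorem~\ref{thm:generalMBIntegralRepresenationForHorn} applied to the nonconfluent Ore-Sato coefficient $\prod_j\Gamma(\langle{\bf A}_j,s\rangle+\alpha_j)$, once one notes that $\mathcal{C}_I$, being an absolutely convergent sum of compact tori, is homologous to all of its unit shifts in the complement of the singular locus.

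\textbf{Main obstacle.} The computational core is routine; the real care is needed in the genericity hypothesis on $\alpha$. One must ensure that (i) for each $k\in\N^n$ the joint zero $s=-\mathbf{A}_I^{-1}(k+\alpha_I)$ of the distinguished linear forms is simple and avoids the polar divisors of the remaining factors $\Gamma(\langle{\bf A}_j,s\rangle+\alpha_j)$, $j\notin I$, and (ii) the small parameter $\varepsilon$ can be chosen uniformly in $k$ so that the tori $\tau_I(k)$ are pairwise disjoint and separate the $I$-poles from all other polar hyperplanes of the integrand. Both conditions hold off a countable union of hyperplanes in $\alpha$-space; once they are secured, the residue sum equals the integral and the justification of series convergence via Stirling's formula proceeds unobstructed.
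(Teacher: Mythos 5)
Your proof is correct and follows exactly the approach the paper indicates: the paper itself does not reprove this theorem (it defers to \cite{Sadykov-SMZh}) but explicitly describes the method as ``computing the residues at a distinguished family of singularities of the integrand,'' and the neighbouring Proposition~\ref{prop:seriesToMBIntegral} is said to be proved the same way. Your change of variables $u=\mathbf{A}_I s+\alpha_I$, the residue $\mathrm{Res}_{u=-k}\prod_j\Gamma(u_j)=(-1)^{|k|}/k!$, the absorption of the sign of $\det\mathbf{A}_I$ into the orientation so that only $|\mathbf{A}_I|$ survives, the genericity discussion ensuring simple poles disjoint from the other polar divisors, and the appeal to Theorem~\ref{thm:generalMBIntegralRepresenationForHorn} for the differential equations are precisely the standard residue argument the paper has in mind.
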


\subsection{Holonomic rank formulas.}

To give a proper formulation to the main
Theorem~\ref{thm:decompositionOfTheSolutionSpace} of this section,
we introduce the following notion.

\begin{definition}
\rm For $m\geq n$ let~$A$ be a $m\times n$ integer matrix of rank~$n$
with the rows ${\bf A}_1,\ldots,{\bf A}_m$ and let $c\in\C^m$ be a vector of
parameters.
Let $I=(i_1,\ldots,i_n)$ be a multi-index such that the square
matrix~${\bf A}_I$ with the rows ${\bf A}_{i_1},\ldots,{\bf A}_{i_n}$ is
nondegenerate. Let~$c_I$ denote the vector
$(c_{i_1},\ldots,c_{i_n}).$ The hypergeometric system ${\rm
Horn}({\bf A}_I,c_I)$ will be referred to as {\it an atomic system
associated with the system} ${\rm Horn}(A,c).$ The number of
atomic systems associated with a hypergeometric system ${\rm
Horn}(A,c)$ equals the number of maximal nondegenerate square
submatrices of the matrix~$A.$ \label{def:associatedAtomicSystem}
\end{definition}
It follows from Theorem~1.3 in~\cite{Sadykov-MathScand} that, as
long as the supports of series solutions are concerned, a generic
hypergeometric system is built of associated atomic systems. More
precisely, the set of supports of solutions to a hypergeometric
system with generic parameters consists of supports of solutions
to associated atomic systems. In particular, the initial exponents
of Puiseux polynomial solutions to a hypergeometric system are
precisely the initial exponents of Puiseux polynomials which
satisfy the associated atomic systems. In the following statement
we sum up the basic properties of Horn hypergeometric systems
that we will need in the sequel.

\begin{proposition}
\label{p3.5} For any solution $v(x)$ to an atomic system
associated with a nonconfluent holonomic system
${\operatorname{Horn}}(A,c)$ with a generic vector of parameters
$c\in\mathbb C^m$, there exists a solution $u(x)\in
S(\operatorname{Horn}(A,c))$ whose support coincides with the
support of the function~$v(x)$.
\end{proposition}

\begin{proof}
Consider a nonconfluent holonomic system
$\operatorname{Horn}(A,c)$ defined by the Ore-Sato coefficient
$$
\varphi(s)
=\phi(s)\prod_{i=1}^{m}\Gamma(\langle{\mathbf A}_{i},s\rangle+c_{i})
$$
with a suitable meromorphic periodic function~$\phi(s)$.

Any solution to the associated atomic system
$\operatorname{Horn}(A_{\mathbf I},c_{\mathbf I})$, ${\mathbf
I}=(i_1,\dots,i_n)\subset\{1,\dots,m\}$ admits the integral
representation
$$
v(x)=\int_{C_{\mathbf I}}\prod_{i\in{\mathbf I}}
\Gamma(\langle{\mathbf A}_{i},s\rangle+c_{i})\phi(s)x^s\,ds
$$
for a suitable choice of the contour $C_{\mathbf I}$ and the
periodic function~$\psi(s)$.

Using this integral representation we obtain the following
solution to the nonconfluent holonomic  system ${\operatorname{Horn}}(A,c)$:
$$
u(x)=\int_{C_{\mathbf I}}\prod_{i\in{\mathbf I}}
\Gamma(\langle{\mathbf A}_{i},s\rangle+c_{i})
\prod_{j\notin{\mathbf I}}
\Gamma(\langle{\mathbf A}_{j},s\rangle+c_{j})\phi(s)x^s\,ds.
$$
Since the vector of parameters $c\in\mathbb C^m$ is generic, we
may assume that the contour~$C_{\mathbf I}$ only contains
intersections of $n$ polar sets of the product
$\prod_{i\in{\mathbf I}}\Gamma(\langle{\mathbf
A}_{i},s\rangle+c_{i})$, that are moreover disjoint from the poles
of the product $\prod_{j\not\in{\mathbf I}}\Gamma (\langle{\mathbf
A}_{j},s\rangle+c_{j})\phi(s)$. Thus in a small neighborhood of
the poles of the factor $\prod_{i\in{\mathbf
I}}\Gamma(\langle{\mathbf A}_{i},s\rangle+c_{i})$ the meromorphic
function $\prod_{j \notin{\mathbf I}} \Gamma(\langle{\mathbf
A}_{j},s\rangle+c_{j})\phi(s)$ is holomorphic. This immediately
yields that the support of~$u(x)$ coincides with the support
of~$v(x)$.
\end{proof}

\begin{remark}
\label{r3.6} {\rm If the vector of parameters $c\in\mathbb C^m$ is not
generic then the support of a solution $u(x)\in
S(\operatorname{Horn}(A,c))$ to a hypergeometric system can be a
proper subset of the support to a solution $v(x)\in
S({\operatorname{Horn}}(A_I,c_I))$ of the associated atomic
system.

Consider the following example:
$$
A=((-1,2),(2,-1),(-1,-1)),
\qquad
c=(0,0,-2).
$$
Given a solution to the hypergeometric system
${\operatorname{Horn}}(A,c)$
$$
w(x)=\sum_{m,n\ge0}
{\operatorname{Res}_{\substack{-s_1+2s_2=-m \\
2{s_1}-s_2=-n}}}\Gamma(-s_1+2s_2)\Gamma(-{s_1}-{s_2}-2)
\Gamma(2{s_1}-{s_2})x^s,
$$
we define the solution to the associated atomic system
$$
v(x)=\sum_{m,n\ge0}{\operatorname{Res}_{\substack{-s_1+2s_2=-m \\
2{s_1}-s_2=-n}}}\Gamma(-s_1+2s_2)\Gamma(2{s_1}-{s_2})x^s.
$$

Since the solution space $S({\operatorname{Horn}}(A,c))$ is
invariant under the monodromy action, the function
$$
u(x)=\frac{1}{2\pi\sqrt{-1}}\bigl(w(x_1e^{2\pi\sqrt{-1}},x_2)
-w(x_1,x_2)\bigr)
$$
satisfies the system ${\operatorname{Horn}}(A,c)$. Straightforward
computation shows that
$$
u(x)=\frac{\bigl(x_1^{2/3}x_2^{2/3}+\sqrt[3]{x_1}+\sqrt[3]{x_2}\bigr)^2}
{3x_1^{4/3}x_2^{4/3}},
$$
i.e. the support of $u(x)$ consists of the six points
$\{s\in\mathbb C^2\!:\allowbreak s_1-2s_2\in\mathbb Z_{\ge0},\;
-2s_1+s_2\in\mathbb Z_{\ge0},\;-2\le s_1+s_2\le0\}$. Observe that
the meromorphic function
$\Gamma(-s_1+2s_2)\Gamma(-{s_1}-{s_2}-2)\Gamma(2{s_1}-{s_2})x^s$
has triple poles at the point that belong to the support of
$u(x)$, all the other its poles being simple.}
\end{remark}

The next theorem summarizes the main properties of the space of
holomorphic solutions to a Horn system that we need throughout the
rest of the paper.

\begin{theorem}
Assume that the hypergeometric system ${\rm Horn}(A,c)$ is nonconfluent, holonomic
and has generic vector of parameters~$c$.

{\rm (1)} The space of local holomorphic solutions at a nonsingular point~$x^{(0)}$
to ${\rm Horn}(A,c)$ admits the following decomposition:
$ S({\rm Horn}(A,c)) = \Psi\oplus \mathcal{F}_{x^{(0)}}. $
Here~$\Psi$ is the subspace of its persistent Puiseux polynomial
solutions and~$\mathcal{F}_{x^{(0)}}$ is the subspace of its fully supported
Puiseux series solutions which converge at~$x^{(0)}.$

{\rm (2)} The dimension of the space~$\mathcal{F}_{x^{(0)}}$ of Puiseux series
(centered at the origin) which satisfy ${\rm Horn}(A,c)$ and converge
at~$x^{(0)} \in {^c\mathcal{A}(\varphi(A,c))}$ is given by
$$
\dim_{\midC} \mathcal{F}_{x^{(0)}} =
\sum\limits_{
\begin{array}{l}
I=(i_1,\ldots,i_n)\subset \{1,\ldots,m\} \\
M(\varphi(A,c),\Log\, x^{(0)}) \subset {({\bf A}_{I}^{-1} \midR_{ + }^n)}^{\vee}
\end{array}
}
|\det {\bf A}_I|.
$$

{\rm (3)} The dimension of the space~$\Psi_0$ of persistent Puiseux polynomial solutions
to a bivariate system ${\rm Horn}(A,c)$ is given by
$\dim_{\midC} \Psi_0 = \sum\limits_{{\bf A}_i,{\bf A}_j \ {\rm lin.\ indep.}} \nu({\bf A}_i,{\bf A}_j). $
\label{thm:decompositionOfTheSolutionSpace}
\end{theorem}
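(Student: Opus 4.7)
The strategy is to treat the four assertions as consequences of two tools already in the paper: the pure decomposition of the solution space (stated after Definition~\ref{def:pureSolution}) together with the Mellin--Barnes construction of Theorem~\ref{thm:MBSolutionToHorn}, and the atomic reduction discussed after Definition~\ref{def:associatedAtomicSystem}. The guiding principle is that, for generic~$c$, the supports of Puiseux series solutions to ${\rm Horn}(A,c)$ coincide with those of solutions to its associated atomic subsystems, and each atomic subsystem ${\rm Horn}({\bf A}_I, c_I)$ with ${\bf A}_I$ nondegenerate contributes $|\det {\bf A}_I|$ linearly independent pure series. I will therefore first produce the right number of pure series from atomic systems, and then separate them according to their support type (finite versus fully supported) and, in the fully supported case, their domain of convergence.

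For \textrm{(1)} I would combine the atomic decomposition with the observation that any pure solution is either supported on a finite set or is fully supported. For generic~$c$ the Proposition preceding the statement gives $\Psi=\Psi_0$, so finite-support pure solutions are automatically persistent. Directness of the sum follows because the two summands consist of solutions with mutually incompatible asymptotic behaviour (finite support against full-dimensional support of convex hull), so a common element would force all coefficients of the fully supported part to vanish. For \textrm{(2)} I would enumerate atomic systems: to each multi-index $I=(i_1,\dots,i_n)$ with ${\bf A}_I$ nondegenerate, Theorem~\ref{thm:MBSolutionToHorn} produces $|\det {\bf A}_I|$ pure Mellin--Barnes series (corresponding to residue cycles at translates of $\tau_I(k)$), and summing over $I$ and subtracting the polynomial ones collected in~$\Psi_0$ gives the stated count for $\mathcal{F}$; the fact that different $I$ produce distinct supports for generic~$c$ ensures no double counting.

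For \textrm{(3)} I would pass through the amoeba. The pure series produced by atomic system $I$ is supported (up to shift) in the lattice cone ${\bf A}_I^{-1}\Z_{\geq 0}^n$, hence its natural domain of convergence in logarithmic coordinates is the dual cone $({\bf A}_I^{-1}\R_{+}^n)^{\vee}$, translated to lie in $^c\!\mathcal{A}(\varphi(A,c))$. A fully supported pure series converges at~$x^{(0)}$ precisely when the connected component $M(\varphi(A,c),\Log x^{(0)})$ is contained in the translate of this dual cone, which gives the selection condition in the sum. Counting $|\det {\bf A}_I|$ for each admissible~$I$ and using the decomposition from~\textrm{(1)} then yields the stated dimension.

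For \textrm{(4)} I would argue directly from Proposition~\ref{prop:persistentPolSol} in the bivariate case $n=2$. Every persistent polynomial solution has its support cut out by a pair of indices $I=(i,j)\subset\{1,\dots,m\}$ with ${\bf A}_i,{\bf A}_j$ linearly independent, and the relations $\langle {\bf A}_i,s\rangle+c_i+k_1=0$, $\langle {\bf A}_j,s\rangle+c_j+k_2=0$ describe a finite rectangular lattice in $\Z^2$ whose cardinality I will compute. The crucial observation is that this lattice has bounded (hence nonempty) integer part exactly when ${\bf A}_i$ and ${\bf A}_j$ lie in opposite open quadrants, in which case the admissible ranges $0\leq k_1<|A_{j,\cdot}|$, $0\leq k_2<|A_{i,\cdot}|$ together with $|\det {\bf A}_I|=|a_1b_2-a_2b_1|$ force the count to be $\min(|a_1b_2|,|b_1a_2|)=\nu({\bf A}_i,{\bf A}_j)$; in all other configurations the lattice is either infinite (so the solution is not polynomial) or empty. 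Summing over linearly independent pairs and checking, via genericity of~$c$, that different pairs contribute disjoint sets of initial exponents yields $\dim\Psi_0 = \sum_{{\bf A}_i,{\bf A}_j\ {\rm lin.\ indep.}} \nu({\bf A}_i,{\bf A}_j)$.

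The main obstacle is the bookkeeping in \textrm{(4)}: turning the qualitative statement of Proposition~\ref{prop:persistentPolSol} into the precise count $\nu({\bf A}_i,{\bf A}_j)$ requires one to handle the interplay between the multiplicities $k_1,k_2$ in the Gamma-quotient factorisation and the determinant of ${\bf A}_I$, and to verify that the opposite-quadrant condition in Definition~\ref{def:index} is the exact criterion for the support to be both nonempty and finite. Secondarily, ensuring that the atomic contributions in \textrm{(2)} and \textrm{(3)} are genuinely independent for generic~$c$ demands a careful genericity argument ruling out accidental coincidences of supports between distinct multi-indices.
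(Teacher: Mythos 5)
Your plan for parts (1) and (3) tracks the paper's argument closely: for (1) the paper also observes that for generic $c$ any Puiseux series solution is either fully supported or a persistent polynomial, identifies $\Psi=\Psi_0$, and gets directness because no finite linear combination of polynomials can become fully supported; for (3) the paper likewise reduces to (2) plus the two-sided Abel lemma of~\cite{PST} that governs the domains of convergence of nonconfluent hypergeometric series. So there you are essentially matching the paper.

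For (2) you have introduced a genuine error. You propose ``summing over $I$ and subtracting the polynomial ones collected in~$\Psi_0$,'' but the target formula $\sum_I |\det {\bf A}_I|$ contains no subtraction, and none is needed: by Theorem~\ref{thm:bivariateAtomicHorn} each atomic system ${\rm Horn}({\bf A}_I,c_I)$ has exactly $|\det {\bf A}_I|$ fully supported pure series (the initial exponents being the $|\det {\bf A}_I|$ cosets $(-{\bf A}_I^{-1}(\Z^n+\alpha))/\Z^n$), while its $\nu({\bf A}_I)$ persistent polynomial solutions form a disjoint collection counted separately in part (4). Thus $\sum_I |\det{\bf A}_I|$ already enumerates precisely the fully supported series; your subtraction step would either double-subtract the polynomials or give the wrong total. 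The paper's proof of (2) is the one-line atomic reduction, no correction term.

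For (4) you diverge from the paper, which simply cites Theorem~6.6 of~\cite{DMS}. Attempting a direct combinatorial derivation from Proposition~\ref{prop:persistentPolSol} is legitimate in principle, and your observation that the opposite-quadrant condition governs finiteness and nonemptiness of the support is the right geometric picture. However, Proposition~\ref{prop:persistentPolSol} gives only a necessary condition, so you still need the sufficiency and the precise cardinality, which the paper supplies not at this point but later via Theorem~\ref{thm:bivariateAtomicHorn} and the explicit rectangle $\mathcal{R}_M$ (whose size is $|b_1a_2|$ or $|a_1b_2|$ according to the two cases, i.e.\ $\min(|a_1b_2|,|b_1a_2|)=\nu$). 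As written, your sketch flags this ``bookkeeping'' as the main obstacle but does not resolve it, and without it the argument is not a proof. To close the gap you would need either to carry out the case analysis behind $\mathcal{R}_M$ in full, or, like the paper, invoke the external reference.
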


\begin{proof}
(1) Observe that any Puiseux series solution (centered at the origin)
of a Horn system with generic parameters is either a fully supported
series or a persistent Puiseux polynomial.
Indeed, for a polynomial to be a solution to a hypergeometric system, its
exponents must satisfy a system of linear algebraic equations. The
generic parameters assumption implies that the right-hand-sides of
these equations are also generic and hence the system of linear
algebraic equations is defined by a square nondegenerate matrix.
The corresponding solutions to the hypergeometric system are
precisely persistent polynomials.
This means, in particular, that for an Ore-Sato coefficient $\varphi$ with
generic parameters $\Psi(\varphi) = \Psi_0 (\varphi).$
Since no linear combination of elements in $\Psi(\varphi)$ can yield a
fully supported Puiseux series, it follows that the sum is direct.

(2) This follows from the previous part together with the two-sided Abel lemma (see Lemma~11 in~\cite{PST})
which describes the domain of convergence of a nonconfluent hypergeometric series.
By the first part of the theorem the generic parameters assumption implies
that only fully supported series must be taken into account and it is therefore
sufficient to consider square nondegenerate submatrices of~$A$.

(3) This is the statement of Theorem~6.6 in~\cite{DMS}.
\end{proof}

The following result (see~\cite{DMS}) gives the holonomic rank of a bivariate nonconfluent
Horn system with generic parameters.
\begin{theorem} {\rm (\cite{DMS})}
Let $A$ be an~$m \times 2$ integer matrix of full rank such that
its rows ${\bf A}_1,\dots, {\bf A}_m$ satisfy ${\bf A}_1+\ldots+{\bf A}_m=0$. If $c\in
\C^m$ is a generic parameter vector, then the ideal ${\rm
Horn}(A,c)$ is holonomic. Moreover,
$$
{\rm rank}({\rm Horn}(A,c)) = \left( \sum_{i : A_{i,1} > 0} A_{i,1} \right) \cdot \left( \sum_{i : A_{i,2} > 0} A_{i,2} \right) \quad  -
\sum_{\mbox{\tiny ${\bf A}_i,{\bf A}_j$ \mbox{\rm lin. dep.} }}
\nu({\bf A}_i,{\bf A}_j),
$$
where the summation runs over linearly dependent pairs~${\bf A}_i$, ${\bf A}_j$
of rows of~$A$
that lie in opposite open quadrants of~$\Z^2.$
\label{thm:DMStheorem}
\end{theorem}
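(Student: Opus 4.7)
The plan is to derive the formula by combining the direct-sum decomposition of Theorem~\ref{thm:decompositionOfTheSolutionSpace}(1) with a combinatorial identity about the rows of~$A$. Holonomicity of ${\rm Horn}(A,c)$ for generic~$c$ follows immediately from Theorem~6.3 of~\cite{DMM}: the exceptional parameter locus is a countable union of hyperplanes in $\C^m$, and genericity of~$c$ places it in the complement. Set $p_j := \sum_{i : A_{i,j}>0} A_{i,j} = \sum_{i : A_{i,j}<0} |A_{i,j}|$, the second equality coming from nonconfluency $\sum_i {\bf A}_i = 0$. The defining operators $L_j := x_j P_j(\theta) - Q_j(\theta)$ then have order exactly~$p_j$, so a Bernstein-Kashiwara argument applied to the characteristic ideal produces the upper bound ${\rm rank}({\rm Horn}(A,c)) \le p_1 p_2$.

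For the matching lower bound, I would invoke Theorem~\ref{thm:decompositionOfTheSolutionSpace}(1): at any nonsingular $x^{(0)}$ we have
$$
{\rm rank}({\rm Horn}(A,c)) \;=\; \dim_{\C}\Psi_0 \;+\; \dim_{\C}\mathcal{F}_{x^{(0)}}.
$$
Part (4) identifies the first summand as $\dim_{\C}\Psi_0 = \sum_{{\bf A}_i,{\bf A}_j \text{ lin. indep.}} \nu({\bf A}_i,{\bf A}_j)$ (only pairs in opposite open quadrants contribute, as $\nu = 0$ otherwise). Part (3) gives $\dim_{\C}\mathcal{F}_{x^{(0)}} = \sum_{I : M \subset ({\bf A}_I^{-1}\R_+^2)^{\vee}} |\det {\bf A}_I|$, where $M=M(\varphi,\Log|x^{(0)}|)$. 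The theorem therefore reduces to proving the identity
$$
\sum_{I : M \subset ({\bf A}_I^{-1}\R_+^2)^{\vee}} |\det {\bf A}_I| \;=\; p_1 p_2 \;-\; \sum_{\{i,j\}} \nu({\bf A}_i,{\bf A}_j),
$$
where the right-hand side runs over all unordered pairs and is chamber-independent.

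The main combinatorial input is the pointwise relation: for ${\bf A}_i,{\bf A}_j$ in opposite open quadrants (the only case where $\nu\ne 0$), both $A_{i,1}A_{j,2}$ and $A_{i,2}A_{j,1}$ have the same sign, so $|\det {\bf A}_I| + 2\nu({\bf A}_i,{\bf A}_j) = |A_{i,1}A_{j,2}| + |A_{i,2}A_{j,1}|$; in the remaining cases $\nu=0$ and $|\det {\bf A}_I|$ equals a signed combination of the same two products determined by the quadrants of ${\bf A}_i,{\bf A}_j$. Expanding $p_1 p_2 = \sum_{i,j} A_{i,1}^+ A_{j,2}^+$ with diagonal terms included, then using nonconfluency to symmetrize $A_{i,j}^+$ against $A_{i,j}^-$, one rewrites $p_1 p_2 - \sum \nu$ as a sum of $|\det {\bf A}_I|$ over a distinguished collection of index pairs.

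The hard part, and the main obstacle, is matching this chamber-independent sum with the chamber-dependent set $\{I : M \subset ({\bf A}_I^{-1}\R_+^2)^{\vee}\}$. Here one uses the geometry of the Ore-Sato polygon $\mathcal{P}(\varphi)$: the cones $({\bf A}_I^{-1}\R_+^2)^{\vee}$ are the outer normal cones at vertices of~$\mathcal{P}(\varphi)$, and a tile-counting argument (or induction on~$m$, adding one row at a time and tracking how both sides change) shows that the subcollection of $I$ selecting a given chamber contributes the same total. An alternative route that sidesteps this chamber analysis is to construct $p_1 p_2 - \sum_{\text{lin. dep.}} \nu$ linearly independent Mellin-Barnes solutions by means of Theorems~\ref{thm:generalMBIntegralRepresenationForHorn} and~\ref{thm:MBSolutionToHorn}, each attached to a distinct nondegenerate square submatrix~${\bf A}_I$, with the $\sum \nu$ collisions accounting for precisely the lost dimensions; combined with the upper bound above, this gives the exact rank.
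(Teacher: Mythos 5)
The paper does not prove this theorem at all: it is stated with the attribution \lq\lq{\rm (\cite{DMS})}\rq\rq\ and the preceding sentence reads \lq\lq The following result (see~\cite{DMS}) gives the holonomic rank\ldots\rq\rq, so the \lq\lq paper's proof\rq\rq\ is a citation. Your attempt to derive the formula from Theorem~\ref{thm:decompositionOfTheSolutionSpace} therefore runs into a circularity rather than a genuine independent proof. Look at where the ingredients of Theorem~\ref{thm:decompositionOfTheSolutionSpace} come from: part~(4) is explicitly \lq\lq the statement of Theorem~6.6 in~\cite{DMS}\rq\rq; part~(1) asserts $S({\rm Horn}(A,c))=\Psi\oplus\mathcal{F}_{x^{(0)}}$, and the proof sketched there only shows the sum is direct and that the two summands exhaust the Puiseux-series solutions -- it does not, on its own, show that this direct sum spans the entire solution space, which is precisely the dimension count Theorem~\ref{thm:DMStheorem} supplies; and part~(3) gives the dimension of $\mathcal{F}_{x^{(0)}}$ by counting supports, which again presupposes knowing the holonomic rank so that the count can be promoted to a basis. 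In short, Theorem~\ref{thm:decompositionOfTheSolutionSpace} is downstream of the rank formula, not upstream of it, so you cannot use it to prove Theorem~\ref{thm:DMStheorem} without begging the question.

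Even setting the circularity aside, two steps in your argument are asserted rather than established, and both are real obstacles. First, the upper bound ${\rm rank}\le p_1 p_2$ \lq\lq by a Bernstein--Kashiwara argument applied to the characteristic ideal\rq\rq\ is not straightforward here: the cases where the rank is strictly less than $p_1 p_2$ are exactly the cases where $A$ has rows in opposite open quadrants, and in those cases the principal symbols $\sigma(L_1)$, $\sigma(L_2)$ share a common factor, so $V(\sigma(L_1),\sigma(L_2))$ is not Lagrangian and a naive B\'ezout bound on the two generating operators gives nothing. One must pass to the full left ideal, identify the extra operators that cut down the characteristic variety, and then run the multiplicity computation -- this is the hard analytic content of~\cite{DMS}/\cite{DMM}, not a one-line bound. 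Second, you explicitly label the combinatorial identity
$\sum_{I:M\subset({\bf A}_I^{-1}\R_+^2)^{\vee}}|\det{\bf A}_I|=p_1p_2-\sum_{\{i,j\}}\nu({\bf A}_i,{\bf A}_j)$
as \lq\lq the hard part, and the main obstacle\rq\rq\ and then offer only a sketch (a tile-counting argument or an unexecuted Mellin--Barnes construction); the chamber-independence of the left-hand side is Theorem~\ref{thm:eachComponentHasABasis}, but the evaluation of the common value is never done. The pointwise relation you state for a single pair in opposite quadrants is correct, but assembling it with the nonconfluency relation $\sum_i{\bf A}_i=0$ into the claimed identity is nontrivial and is precisely the place where a complete proof would have to do work. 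As written, the proposal identifies the right pieces but proves neither the upper bound nor the lower bound.
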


\begin{remark}
\rm The conclusion of Theorem~\ref{thm:DMStheorem} only holds
under the nonconfluency assumption on the matrix~$A.$ For
instance, the confluent Horn system generated by the operators $x_1
(\theta_1 + \theta_2) (\theta_1 + \theta_2 - a) - \theta_1$ and $x_2
(\theta_1 + \theta_2) (\theta_1 + \theta_2 - a) - \theta_2$ is
holonomic with rank~2. Indeed, if the above equations are
satisfied by a function $f(x)$ then $f_{x_1} = f_{x_2}$ and hence
$f(x)=g(x_1+x_2)$ for a suitable univariate function~$g.$ Moreover~$g(t)$
is a solution to the ordinary differential equation $t^2 g''(t) +
((1-a)t -1) g'(t) =0.$ A fundamental system of solutions of this
equation is $1, \Gamma(-a,1/t),$ where $\Gamma(p,q)$ is the
incomplete gamma-function. Thus a basis in the solution space of
the Horn system is~$1,$ $\Gamma\left(-a,\frac{1}{x_1+x_2}\right). $
Observe that $\Gamma(1,1/(x_1+x_2))=e^{-1/(x_1+x_2)}.$ Thus for a
confluent system the rank can be smaller than the product of the
degrees of the operators even if no parallel lines or persistent
polynomial solutions are present.
\end{remark}
\begin{remark}
\rm Theorem~\ref{thm:DMStheorem} is substantially bivariate, yet
it can be generalised to arbitrary dimension of the space of
variables. Theorem~6.10, 7.13 in~\cite{DMM} provide an explicit
combinatorial formula for the holonomic rank of a nonconfluent
hypergeometric system Horn$(A,c).$ Let us choose a $(m-n) \times
m$ submatrix $B$ of the matrix~$A$ with integer coefficients whose
columns span $\mathbb Z^{m-n}$ as a lattice, satisfying $B\cdot
A=0\in\mathbb Z^{m-n}\times\mathbb Z^{n}$. For
$g=|{\ker(B)}/\mathbb ZA|$ the index of the integer lattice
generated by the columns of $A$ in its saturation, the following
formula holds for generic $c\in\mathbb C^m:$
$$
\operatorname{rank}(\operatorname{Horn}(A,c))
=g\operatorname{vol}(B)+{\operatorname{rank}}(\Psi_0(\varphi)),
$$
where $\operatorname{vol}(B)$ denotes the normalised volume of the
convex hull of the columns of~$B$. This formula is a numerical
counterpart of the decomposition Theorem~\ref{thm:decompositionOfTheSolutionSpace},~1) on the
space of holomorphic solutions to a hypergeometric system.

In example~\ref{ex(1,2)(-1,-1),(0,-1)} we will see that
$\operatorname{rank}(\Psi_0)=1$, as~$\Psi_0$ is generated by~$f_1$
and the rank 
${\operatorname{rank}}$ $ (Horn (A, ( c_1, c_2,c_3)))=2$.
In fact, for $-(c_1+c_2+c_3)\notin\mathbb N$ the rank of fully
supported solutions is~1 while for $-(c_1+c_2+c_3)\in\mathbb N$
the rank of the factor space $\Psi/\Psi_0$ is~1.
\end{remark}

\subsection{Monodromy action on the invariant subspace of Puiseux polynomial solutions.}

Recall that by a Puiseux polynomial we mean a finite linear combination
of monomials with (in general) arbitrary complex exponents.
Such a polynomial may only have singularities on the union
of the coordinate hyperplanes $\{x\in\C^n : x_1 \ldots x_n =0 \}.$
The set of all Puiseux polynomial solutions of a Horn system is a linear subspace~$\Psi$
in the space of its local holomorphic solutions.
This subspace is clearly invariant under the action of monodromy.

Let~$\{ p_k(x)  \}_{k=1}^p$ be a pure basis of the linear
space~$\Psi$ (see Definition~\ref{def:pureSolution}). That is, let
$p_k(x)= x^{v_k} \tilde{p}_k(x),$ where $v_k \in\C^n$
and~$\tilde{p}_k(x)$ is a Laurent polynomial (i.e., a polynomial
with integer exponents). Since a Laurent polynomial has no
branching, it follows that the branching of this basis is the same
as that of a system of monomials $x^{v_1},\ldots, x^{v_p},$ where
$v_k\in\C^n.$ Thus the branching locus for the solutions of such a
Horn system is $\{x\in\C^n: x_1 \ldots x_n=0\},$ the generators of
the fundamental group with the base point $(1,\ldots,1)$ are
$\gamma_j=(1,\ldots,1,e^{2\pi \sqrt{-1}\, t},1,\ldots,1),$ $t\in
[0,1],$ $j=1,\ldots,n.$ The corresponding monodromy matrix is
given by $M_j= {\rm diag}(e^{2\pi \sqrt{-1}\, v_{j}}).$

\subsection{Intertwining operators for Horn systems.}
\label{subsec:intertwiningOperators}

The purpose of this subsection is to compute the intertwining
operators for the monodromy representations of Horn systems whose
parameters differ by integers. This will allow us to conclude that
certain monodromy representations are equivalent.
The intertwining operators for the monodromy representations of an
ordinary hypergeometric differential equation have been computed
in~\cite{BeukersHeckman}.

Recall that by $S({\rm Horn}(A,\alpha))$ we denote the linear space of
(local) solutions to the hypergeometric system ${\rm Horn}(A,c).$
The class of hypergeometric functions is closed under
multiplication with Puiseux monomials. More precisely, the
operator $x^\lambda \bullet$ which multiples a function with the
monomial $x^\lambda=x_{1}^{\lambda_1}\ldots x_{n}^{\lambda_n}$ is
a vector space isomorphism between the following spaces:
$$
x^\lambda \bullet : S({\rm Horn}(A, A\lambda + \alpha))
\rightarrow S({\rm Horn}(A,\alpha)).
$$
Since multiplication with a Laurent monomial does not alter the
branching of a function, we conclude that for $\lambda\in\Z^n$ the
hypergeometric systems ${\rm Horn}(A,\alpha)$ and ${\rm Horn}(A,
A\lambda + \alpha)$ have the same monodromy.
\begin{proposition}
Let ${\bf A}_1,\ldots, {\bf A}_m \in\Z^n$ be the rows of an integer matrix~$A$
of full rank~$n$ and let $c \in\C^m$ be the vector of
parameters. The differential operator
\begin{equation}
\langle {\bf A}_j, \theta \rangle + c_j - 1: S({\rm Horn}(A, c
- e_j)) \rightarrow S({\rm Horn}(A, c))
\label{intertwiningOperator}
\end{equation}
is an intertwining operator for the monodromy representations of
the corresponding Horn systems.
\label{prop:intertwiningOperators}
\end{proposition}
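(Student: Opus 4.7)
The plan is to verify the intertwining at the level of the Weyl algebra of differential operators and then deduce the monodromy statement from the general principle that analytic continuation commutes with differential operators having polynomial coefficients.

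Write $H_\ell^{(c)} := x_\ell P_\ell^{(c)}(\theta) - Q_\ell^{(c)}(\theta)$ for the generators of ${\rm Horn}(A,c)$ and $L_j := \langle {\bf A}_j, \theta \rangle + c_j - 1$. To say that $L_j$ sends $S({\rm Horn}(A, c - e_j))$ into $S({\rm Horn}(A, c))$ is equivalent to showing that for every $\ell$, the product $H_\ell^{(c)} L_j$ lies in the left ideal generated by $H_1^{(c-e_j)}, \ldots, H_n^{(c-e_j)}$. The key observation, which is the operator-level manifestation of the identity $z\, \Gamma(z) = \Gamma(z+1)$, is the following polynomial shift identity. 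When $A_{j,\ell} > 0$, comparing the products that define $P_\ell^{(c)}$ and $P_\ell^{(c-e_j)}$ one finds
\[ P_\ell^{(c-e_j)}(\theta)\, L_j(\theta + e_\ell) \;=\; P_\ell^{(c)}(\theta)\, L_j(\theta), \]
while $Q_\ell^{(c)} = Q_\ell^{(c-e_j)}$; symmetrically, when $A_{j,\ell} < 0$ one has
\[ Q_\ell^{(c-e_j)}(\theta)\, L_j(\theta - e_\ell) \;=\; Q_\ell^{(c)}(\theta)\, L_j(\theta), \]
together with $P_\ell^{(c)} = P_\ell^{(c-e_j)}$; and the case $A_{j,\ell}=0$ is trivial.

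Next I would use the Weyl-algebra commutation $x_\ell R(\theta) = R(\theta - e_\ell) x_\ell$ to move $\theta$-polynomials past $x_\ell$ where needed. A short calculation in each of the three cases above turns $H_\ell^{(c)} L_j$ into a left multiple of $H_\ell^{(c-e_j)}$: one obtains $H_\ell^{(c)} L_j = L_j(\theta)\, H_\ell^{(c-e_j)}$ when $A_{j,\ell} \geq 0$ and $H_\ell^{(c)} L_j = L_j(\theta - e_\ell)\, H_\ell^{(c-e_j)}$ when $A_{j,\ell} < 0$. This establishes the mapping property~(\ref{intertwiningOperator}).

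Finally, since $L_j$ is a differential operator with polynomial coefficients, its action commutes with analytic continuation along any loop $\gamma$ in the complement of the common singular locus of the two systems, so $\gamma_*(L_j g) = L_j(\gamma_* g)$ for every local solution $g$ of ${\rm Horn}(A, c-e_j)$; this is exactly the statement that $L_j$ intertwines the two monodromy representations. The main obstacle is the case-by-case bookkeeping in the first step: once the shift identity is in place the algebra is mechanical, but one must be scrupulous about which side of $x_\ell$ each $\theta$-polynomial stands on when applying $x_\ell R(\theta) = R(\theta - e_\ell) x_\ell$.
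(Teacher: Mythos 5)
Your argument is correct and is essentially the paper's own proof, just written out in more detail: the paper records exactly the two Weyl-algebra identities $L_j(\theta)\,H_i(A,c-e_j)=H_i(A,c)\,L_j(\theta)$ for $A_{j,i}>0$ and $L_j(\theta-e_i)\,H_i(A,c-e_j)=H_i(A,c)\,L_j(\theta)$ for $A_{j,i}\le 0$, which coincide with your final case-by-case conclusion (the two parametrizations agree on the boundary $A_{j,i}=0$). Your additional remarks — the $\Gamma$-shift explanation of the polynomial identity and the observation that polynomial-coefficient operators commute with analytic continuation — are sound and merely make explicit what the paper leaves tacit.
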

\begin{proof}
Denote by $H_{i}(A,c)$ the differential operator defining the
$i$-th equation in the hypergeometric system ${\rm Horn}(A,c),$ (\ref{horn}).

The following equalities immediately yield the statement: for
$A_{i,j} \leq 0$
$$
(\langle {\bf A}_j, \theta - e_i \rangle + c_j -1) H_i(A, c- e_j) =  H_i(A, c)  (\langle {\bf A}_j, \theta \rangle + c_j - 1),
$$
while for $A_{i,j} > 0$
$$
(\langle {\bf A}_j, \theta \rangle + c_j -1) H_i(A, c- e_j) =  H_i(A, c)  (\langle {\bf A}_j, \theta \rangle + c_j - 1).
$$
\end{proof}
By means of the intertwining operators, we establish a statement
analogous to Proposition~2.7 in~\cite{BeukersHeckman}.
\begin{proposition}
Suppose that the solution space of the system $S({\rm Horn}(A,c +
\ell))$ contains a nontrivial subspace of persistent Puiseux
polynomial solutions $\Psi_0 \not =\{ 0 \}$ for $\ell \in \Z^n.$
Then there is a non-trivial monodromy invariant subspace of
$S({\rm Horn}(A,c )) $ with codimension higher than~$1.$ In
particular monodromy representation of $S({\rm Horn}(A,c ))$ is
reducible.
\end{proposition}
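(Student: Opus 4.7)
The plan is to import the nontrivial invariant subspace $\Psi_0\subset S({\rm Horn}(A,c+\ell))$ into $S({\rm Horn}(A,c))$ by means of the intertwining operators supplied by Proposition~\ref{prop:intertwiningOperators}. Iterating the elementary intertwiner $\langle {\bf A}_j,\theta\rangle + c_j - 1$ (and, where a component of $\ell$ has the opposite sign, iterating the same identity in the reverse direction) along a lattice path connecting $c+\ell$ and $c$, one obtains a composite differential operator
$$
\mathcal{I}_\ell:S({\rm Horn}(A,c+\ell))\longrightarrow S({\rm Horn}(A,c))
$$
of the form $\prod_j\prod_{k}(\langle {\bf A}_j,\theta\rangle + c_j + k - 1)$ with appropriately chosen shifts. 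Each elementary factor intertwines the monodromy representations, so $\mathcal{I}_\ell$ itself commutes with the monodromy action, and therefore $V:=\mathcal{I}_\ell(\Psi_0)\subset S({\rm Horn}(A,c))$ is automatically a monodromy-invariant subspace.

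To see that $V$ is nontrivial, I would argue as follows. Every element of $\Psi_0$ is a finite linear combination of pure Puiseux monomials $x^v\tilde p(x)$, and each factor of $\mathcal{I}_\ell$ acts on a pure monomial $x^v$ as multiplication by the scalar $\langle {\bf A}_j,v\rangle + c_j + k - 1$. By Proposition~\ref{prop:persistentPolSol} at most $n$ of the $m$ possible linear forms $\langle {\bf A}_j,\cdot\rangle + c_j + k$ vanish on ${\rm supp}\,f$ for any persistent solution $f$; by ordering the composition so that at least one factor indexed by $j\notin I$ survives on each exponent, we conclude that $\mathcal{I}_\ell$ does not annihilate all of $\Psi_0$.

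For the codimension of $V$, I would combine the decomposition of Theorem~\ref{thm:decompositionOfTheSolutionSpace}(1) with the rank formula of Theorem~\ref{thm:DMStheorem}. Since $\mathcal{I}_\ell$ is a polynomial differential operator, it maps Puiseux polynomials to Puiseux polynomials, so $V\subset\Psi(c)$. The codimension of $\Psi(c)$ in $S({\rm Horn}(A,c))$ equals the dimension of the fully supported subspace $\mathcal{F}_{x^{(0)}}$, and Theorem~\ref{thm:decompositionOfTheSolutionSpace}(2)--(3) express this as a sum $\sum_I|\det {\bf A}_I|$ ranging over suitable atomic subsystems. The hypothesis $\Psi_0(c+\ell)\neq\{0\}$, interpreted through the index count $\nu({\bf A}_i,{\bf A}_j)$ of Theorem~\ref{thm:decompositionOfTheSolutionSpace}(4), forces enough combinatorial richness in the matrix~$A$ that the sum defining $\dim\mathcal{F}_{x^{(0)}}$ contains at least two nonzero contributions, yielding $\dim\mathcal{F}_{x^{(0)}}\geq 2$ and hence ${\rm codim}\,V\geq 2$.

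The main obstacle I anticipate is precisely the last step: converting the bare existence of persistent polynomial solutions in the shifted system into the lower bound $\dim\mathcal{F}_{x^{(0)}}\geq 2$ for the unshifted system. Both quantities are controlled by the combinatorics of the rows ${\bf A}_i$ (through the indices $\nu({\bf A}_i,{\bf A}_j)$ and the volumes $|\det {\bf A}_I|$), but relating them requires a careful case analysis to rule out low-rank degenerations in which $\Psi_0$ might already exhaust the whole solution space; this combinatorial comparison between Theorem~\ref{thm:decompositionOfTheSolutionSpace}(4) and Theorem~\ref{thm:DMStheorem} is the delicate part of the argument.
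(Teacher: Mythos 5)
The direction of your intertwining map is the first problem. Proposition~\ref{prop:intertwiningOperators} only produces operators that \emph{raise} a parameter by~$1$, namely $\langle {\bf A}_j,\theta\rangle + c_j - 1:\ S({\rm Horn}(A,c-e_j))\to S({\rm Horn}(A,c))$. Your $\mathcal{I}_\ell$ is supposed to go from $S({\rm Horn}(A,c+\ell))$ to $S({\rm Horn}(A,c))$, so whenever a component $\ell_j>0$ you would need to \emph{lower} the $j$-th parameter. You dismiss this with ``iterating the same identity in the reverse direction,'' but that amounts to inverting the intertwiner, which is exactly what fails when $\Psi_0\neq\{0\}$ supplies a nontrivial kernel. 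The paper resolves this by always composing in the forward direction: it first raises the negative components of $\ell$ to zero and then works with the operator $T:S({\rm Horn}(A,c))\to S({\rm Horn}(A,c+\ell))$ for $\ell\in\Z_{\geq 0}^n$, arguing dichotomously (either $\ker T\neq\{0\}$ produces an invariant subspace outright, or $T$ is injective and the preimage of a monomially-defined complement of $\Psi_0$ does the job). Pushing $\Psi_0$ \emph{forward}, as you do, is a genuinely different and less tractable route.

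The codimension part of your argument also targets the wrong quantity and does not close. First, ``codimension higher than~$1$'' in the statement has to be read as \emph{positive} codimension (a proper subspace), not $\geq 2$: Example~\ref{ex(1,2)(-1,-1),(0,-1)} has holonomic rank~$2$, a one-dimensional $\Psi_0$ and a one-dimensional $\mathcal{F}_{x^{(0)}}$, so the only proper nontrivial invariant subspace there has codimension exactly~$1$. This same example shows your claimed inequality $\dim\mathcal{F}_{x^{(0)}}\geq 2$ is false in general, so the final reduction to a combinatorial comparison between Theorem~\ref{thm:decompositionOfTheSolutionSpace}(4) and Theorem~\ref{thm:DMStheorem} (which you in any case acknowledge you have not carried out) is pursuing a bound that does not hold. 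Finally, your nontriviality step --- invoking Proposition~\ref{prop:persistentPolSol} to ``order the composition so that at least one factor indexed by $j\notin I$ survives'' --- is not a well-defined procedure: that proposition constrains which linear forms \emph{must} vanish on ${\rm supp}\, f$, not which ones \emph{cannot}, and the factors appearing in $\mathcal{I}_\ell$ are dictated by $\ell$, not by a choice of ordering. The paper sidesteps all three difficulties precisely because its construction lives on the kernel/preimage side of a single forward composite intertwiner.
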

\begin{proof}
Let $J$ be the set of indices $J \subset \{1, \ldots, m\}$ such
that $\ker (\langle {\bf A}_j, \theta \rangle + c_j + \ell_j) \cap
\Psi_0 \ni x^\alpha \not =0$ for $j \in J.$ We remark here that we
can always find a monomial element in $\Psi_0$ as long as $\Psi_0
\neq \{0\}$. Then
$$
(\langle {\bf A}_j, \theta \rangle + c_j + \ell_j) : S({\rm Horn}(A,c + \ell)) \rightarrow  S({\rm Horn}(A,c + \ell + e_j))
$$
has a non-trivial kernel. Assume $\ell_j < 0$ and choose maximal $k_j,$
$ \ell_j \leq k_j \leq -1 $ such that
$$
\langle {\bf A}_j, \theta \rangle + c_j + k_j: S({\rm Horn}(A, c + \ell + (k_j - \ell_j)e_j)) \rightarrow
S({\rm Horn}(A, c + \ell + (k_j - \ell_j +1)e_j))
$$
has a non-trivial kernel. This implies that the space
$$
\prod_{k=1}^{-k_j}(\langle {\bf A}_j, \theta \rangle + c_j - k)  S\left({\rm Horn}(A,c + \ell + (k_j - \ell_j)e_j)\right)
$$
is an invariant subspace of  $S({\rm Horn}(A,c + \ell  - \ell_j
e_j)).$

Thus  $S\left({\rm Horn}\left(A,c + \ell  - \sum\limits_{j
\in J, \ell_j <0 } \ell_j e_j \right)\right)$ has an invariant
subspace of codimension greater than~1. If we consider
$$
\prod_{i \not \in J, \ell_i <0
}\prod_{\lambda_i=0}^{-\ell_i-1}(\langle {\bf A}_i, \theta \rangle +
c_i+\ell_i + \lambda_i ) S\left({\rm Horn}\left(A,c + \ell   -
\sum_{j \in J, \ell_j <0 } \ell_j e_j \right) \right),
$$
it contains a non-trivial monodromy invariant subspace of
$$
S \left( {\rm Horn}\left( A,c + \ell   - \sum_{ \ell_j <0 } \ell_j
e_j \right) \right).
$$
Now the proof of the statement is reduced to that for the case $\ell \in \Z_{\geq 0}^n.$
We see that
$$
\prod_{j=1 }^n\prod_{\lambda_j=0}^{\ell_j-1}(\langle {\bf A}_j, \theta
\rangle + c_j+ \lambda_j )^{-1}  (S({\rm Horn}(A,c + \ell )) /
\Psi_0)
$$
is an invariant subspace of  $S({\rm Horn}(A,c ))$ in question. We remark here that
none of the operators
$\langle {\bf A}_j, \theta \rangle + c_j+ \lambda_j $ for $j = 1, \ldots, n$ and $\lambda_j=0, \ldots, \ell_j - 1$
appears in the operators  $P_i(\theta),  Q_i(\theta), \,\,\, i=1,\ldots, n $  of ~(\ref{horn}) for ${\rm Horn}(A,c + \ell)$.
\end{proof}
\begin{corollary}
In the case of two variables, suppose that
$$
\sum_{{\bf A}_j, {\bf A}_k\ {\rm lin.\,\, indep.}} \nu({\bf A}_j,{\bf A}_k)=0,
$$
where the summation is over all pairs of linearly independent rows
of the matrix defining the Horn system. Then for generic parameter
vector~$c$ the monodromy representations of the Horn systems
${\rm Horn}(A, c)$ and ${\rm Horn}(A, c - e_j)$ are
equivalent for any $j=1,\ldots,m.$
\label{cor:equivalentRepresentations}
\end{corollary}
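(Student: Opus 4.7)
The plan is to promote the intertwining operator of Proposition~\ref{prop:intertwiningOperators},
\[
L_j := \langle {\bf A}_j, \theta \rangle + c_j - 1 : S({\rm Horn}(A, c - e_j)) \longrightarrow S({\rm Horn}(A, c)),
\]
to an isomorphism of vector spaces. Since $L_j$ is a linear differential operator with polynomial coefficients, it commutes with analytic continuation along every loop in the complement of the singular locus, so combined with the intertwining property, an isomorphism $L_j$ would furnish a monodromy-equivariant identification of the two solution spaces, which is precisely equivalence.

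By Theorem~\ref{thm:DMStheorem} the holonomic rank of a bivariate nonconfluent Horn system with generic parameters depends only on the matrix $A$, so $\dim S({\rm Horn}(A, c - e_j)) = \dim S({\rm Horn}(A, c))$ is finite and common to both sides, and it suffices to prove injectivity of $L_j$. The assumption $\sum_{{\bf A}_i,{\bf A}_k\ \mathrm{lin.\,indep.}} \nu({\bf A}_i, {\bf A}_k) = 0$ together with part~(4) of Theorem~\ref{thm:decompositionOfTheSolutionSpace} gives $\dim \Psi_0 = 0$ for \emph{every} generic parameter vector, and since genericity of $c$ implies genericity of $c - e_j$, we get $\Psi(\varphi(A, c - e_j)) = \Psi_0(\varphi(A, c - e_j)) = 0$ (invoking the unlabelled proposition stating $\Psi = \Psi_0$ for generic $c$). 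Hence by Theorem~\ref{thm:decompositionOfTheSolutionSpace}(1) the whole local solution space $S({\rm Horn}(A, c - e_j))$ consists of fully supported Puiseux series.

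For injectivity, let $f \in \ker L_j$ and decompose $f = \sum_r \lambda_r s_r$ as a linear combination of pure fully supported solutions $s_r$ in the sense of Definition~\ref{def:pureSolution}, each supported in a translate of a $2$-dimensional open cone and carrying pairwise distinct initial exponents modulo $\Z^2$. Since $L_j$ acts on a monomial $x^\alpha$ by multiplication by the scalar $\langle {\bf A}_j, \alpha \rangle + c_j - 1$, the image $L_j s_r$ has the same initial exponent modulo $\Z^2$ as $s_r$, with support equal to $\mathrm{supp}(s_r)$ minus the single affine line $\{\alpha : \langle {\bf A}_j, \alpha \rangle = 1 - c_j\}$; the convex hull of this set still contains a translate of an open $2$-cone, so $L_j s_r$ remains a nonzero fully supported series, and the $L_j s_r$ are linearly independent by disjointness of their supports modulo $\Z^2$. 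Therefore $L_j f = 0$ forces every $\lambda_r = 0$, so $f = 0$ and $L_j$ is the sought isomorphism. The main delicate point I expect to navigate is the validity of the pure decomposition and the ``diagonal'' action of $L_j$ on it; this hinges on the bivariate setup (where $\langle {\bf A}_j, \cdot \rangle = 1 - c_j$ is codimension~$1$ inside a $2$-dimensional cone) and on the hypothesis, which removes the only potential source of kernel elements --- namely the persistent Puiseux polynomial solutions.
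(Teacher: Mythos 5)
Your proposal follows exactly the paper's argument: show the intertwining operator $L_j$ of Proposition~\ref{prop:intertwiningOperators} is injective (hence, by equality of holonomic ranks, an isomorphism, and by monodromy-equivariance, an equivalence of representations), and deduce injectivity from the hypothesis $\sum \nu({\bf A}_j,{\bf A}_k)=0$ via the absence of persistent Puiseux polynomial solutions and the fact that fully supported series are not annihilated by an operator of the form $\langle {\bf A}_j,\theta\rangle + c_j - 1$. The only difference is that you spell out, via the diagonal action of $L_j$ on monomials and the pure decomposition, why a fully supported series cannot lie in the kernel, a step the paper states without elaboration.
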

\begin{proof}

The condition on the indices of the rows of the defining matrix
means precisely (by Theorem~\ref{thm:decompositionOfTheSolutionSpace},~3) that there are no
persistent polynomial solutions to the Horn system in question.
Thus for generic parameters all solutions are fully supported
(that is, the convex hull of the support of any of the solutions
has dimension~$2$). No such series is annihilated by a
differential operator of the form ~(\ref{intertwiningOperator})  and hence the
intertwining operators have trivial kernels. This means that the
monodromy representations are equivalent.
\end{proof}
\begin{example}
\rm The hypergeometric system defined by the matrix
\begin{equation}
\left(
\begin{array}{rr}
 1 & 2  \\
-1 & -1  \\
 0 & -1
\end{array}
\right)
\label{eq:(1,2)(-1,-1),(0,-1)matrix}
\end{equation}
and the generic parameter vector $(c_1,c_2,c_3) \in\C^3$ is generated by
the differential operators
\begin{equation}
\left\{
\begin{array}{l}
x_1 (\theta_1 + 2 \theta_2 + c_1) + (\theta_1 + \theta_2 - c_2), \\
x_2 (\theta_1 + 2 \theta_2 + c_1) (\theta_1 + 2 \theta_2 + c_1 + 1) -
(\theta_1 + \theta_2 - c_2) (\theta_2 - c_3).
\end{array}
\right.
\label{horn(1,2)(-1,-1),(0,-1)}
\end{equation}
It is holonomic for any $(c_1,c_2,c_3)$ with rank~2.
A {\it universal basis} in the solution space of~(\ref{horn(1,2)(-1,-1),(0,-1)}),
valid for any values of $(c_1,c_2,c_3)\in\C^3,$ is given by the functions
$f_1(x; c) = x_1^{c_1+2c_2} x_2^{-c_1-c_2}$
and
$f_2(x; c) = x_1^{c_1+2c_2}(x_2^{-c_1-c_2} - x_2^{c_3} (x_1+x_1^2+x_2)^{-c_1-c_2-c_3})/(c_1+c_2+c_3).$
For $c_1+c_2+c_3=0,$ this basis degenerates into the pair of functions
$ x_1^{c_1+2c_2} x_2^{-c_1-c_2},$ $x_1^{c_1+2c_2} x_2^{-c_1-c_2} \log\frac{x_1 + x_1^2 + x_2}{x_2}.$
Observe that the system~(\ref{horn(1,2)(-1,-1),(0,-1)}) is resonant if and only if $c_1+c_2+c_3\in\Z.$
The notion of maximal resonance gives nothing new in this example
since there is only one (up to scaling) linear relation between the rows of the matrix~(\ref{eq:(1,2)(-1,-1),(0,-1)matrix}).
Let $Sol(c)$ denote the linear space of
local solutions to~(\ref{horn(1,2)(-1,-1),(0,-1)}) at a
nonsingular point. The intertwining operators for this Horn system
are given by
$$
\begin{array}{lllr}
I_1 & = & \theta_1 + 2 \theta_2 + c_1-1  &: Sol(c_1-1,c_2,c_3) \rightarrow Sol(c), \\
I_2 & = & -\theta_1 - \theta_2 + c_2-1   &: Sol(c_1,c_2-1,c_3) \rightarrow Sol(c), \\
I_3 & = & - \theta_2 + c_3-1             &: Sol(c_1,c_2,c_3-1) \rightarrow Sol(c).
\end{array}
$$
Observe that
$$
\begin{array}{lcl}
I_1(f_1(x; c))     & = & I_2(f_1(x; c)) = 0, \\
I_3(f_1(x; c_1,c_2,c_3-1))     & = & (c_1+c_2+c_3-1)f_1(x;c), \\
I_1(f_2(x; c_1 - 1, c_2, c_3)) & = & I_2(f_2(x; c_1,c_2 - 1,c_3)) = \\
                           &   & (c_1 + c_2 + c_3) f_2(x; c) - f_1(x; c), \\
I_3(f_2(x; c_1, c_2, c_3 - 1)) & = & (c_1 + c_2 + c_3) f_2(x; c).
\end{array}
$$
This example shows that the intertwining operators constructed above
may have nontrivial kernels despite the fact that the monodromy
of~(\ref{horn(1,2)(-1,-1),(0,-1)}) only depends on the values of
$c_1,c_2,c_3$ modulo~$\Z.$
\label{ex(1,2)(-1,-1),(0,-1)}
\end{example}


\section{Explicit monodromy calculation for simplicial and parallelepipedal hypergeometric families}
\label{sec:polynomialBases}

\subsection{Atomic hypergeometric systems.}

In this section, we investigate monodromy representations of two families of hypergeometric systems.
They will generate two classes of polygons corresponding to maximally reducible monodromy
representations in \S ~\ref{sec:maximallyReducibleMonodromy}.


Recall that by Definition ~\ref{def:associatedAtomicSystem} an atomic hypergeometric
system of equations is a confluent Horn system defined by a
nondegenerate square matrix.
An atomic system 
can be transformed into a system of differential equations with
constant coefficients by means of the isomorphism in Corollary~5.2
in~\cite{DMS}. In accordance with the
Malgrange-Ehrenpreis-Palamodov fundamental principle
\cite{Palamodov}, an atomic system only has elementary solutions
which can be expressed in terms of Puiseux polynomials and
exponential functions. A detailed analysis of the properties of a
general atomic hypergeometric system has been carried out
in~\cite{Sadykov-Journal of SFU}. Observe that an atomic system is
confluent by definition since the nonconfluency
condition~(\ref{nonconfluency}) is a linear relation for the rows
of the defining matrix. Also, by definition an atomic system is
never resonant. A solution to a holonomic atomic system is either
a persistent Puiseux polynomial or a fully supported Puiseux
series.

In the case of two variables it is possible to tell exactly how
many Puiseux polynomial solutions an atomic system might have and
what their initial exponents are.
 The following theorem is a
consequence of  Theorem ~\ref{thm:decompositionOfTheSolutionSpace}
(2) together with Theorem~2.5, Theorem~5.3 and Lemma~6.5
in~\cite{DMS}, and the rank formula for GKZ hypergeometric system.

\begin{theorem}
{\rm (1)} For any $2\times 2$ nondegenerate integer matrix
$
M=
\left(
\begin{array}{rr}
a_1 & b_1  \\
a_2 & b_2
\end{array}
\right) $ \noindent and any $\tilde{c}\in\C^2$ the holonomic rank
of the associated {\it atomic} system is given by ${\rm rank}({\rm
Horn}(M,\tilde{c}))= |\det(M)| + \nu(M).$ Furthermore, there exist
$|\det(M)|$ fully supported series solutions of ${\rm
Horn}(M,\tilde{c})$ while the remaining~$\nu(M)$ solutions are
persistent Puiseux polynomials.

{\rm (2)} In the case when~$\nu(M)>0$, the initial exponents of
the Puiseux polynomial solutions to ${\rm Horn}(M,\tilde{c})$ are
given by~$-M^{-1}(\mathcal{R}_M + \tilde{c}),$ where
$$
\mathcal{R}_M = \left\{
\begin{array}{ll}
\{ (u,v) \in \N^2 : u < |b_1|, \, v < |a_2| \} , & \; \mbox{if} \,\;
 |a_1b_2| > |b_1a_2|, \\
\{ (u,v) \in \N^2 : u < |a_1|, \, v < |b_2| \} , & \; \mbox{if} \,\;
 |a_1b_2| < |b_1a_2|. \end{array} \right.
$$
\label{thm:bivariateAtomicHorn}
\end{theorem}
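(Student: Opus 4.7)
The plan is to treat the two parts in turn, exploiting the fact that for an atomic system we have $m=n=2$, so $M$ itself is the unique maximal nondegenerate square submatrix of the defining matrix. By the discussion preceding the statement, the isomorphism in Corollary~5.2 of~\cite{DMS} turns ${\rm Horn}(M,\tilde c)$ into a constant-coefficient system, and the Malgrange--Ehrenpreis--Palamodov principle then splits its solution space into a ``polynomial'' part (persistent Puiseux polynomials) and an ``exponential'' part (pure fully supported series). My approach is to count each part independently and identify the polynomial exponents explicitly.

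For part~(1), the fully supported count comes directly from Theorem~\ref{thm:decompositionOfTheSolutionSpace}(2): there is only one ordered multi-index $I=(1,2)$, so the number of pure fully supported series solutions equals $|\det M|$. Their initial exponents run over the coset $-M^{-1}(\Z^2+\tilde c)/\Z^2$, as in the proof of that theorem. The persistent polynomial count is then extracted from Theorem~\ref{thm:decompositionOfTheSolutionSpace}(4) (equivalently, Theorem~6.6 in~\cite{DMS}): the sum over linearly independent pairs of rows reduces to the single term $\nu(M)$. Adding the two contributions gives ${\rm rank}({\rm Horn}(M,\tilde c))=|\det M|+\nu(M)$, and it also tells us that when $\nu(M)=0$ there are no persistent polynomial solutions, which matches the formula for $\mathcal{R}_M$ being empty.

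For part~(2), I would start from Proposition~\ref{prop:persistentPolSol}. For a monomial $x^\alpha$ in the support of a persistent polynomial solution and each coordinate $\ell=1,2$, there must be some $j$ and some $k_\ell\in\{0,\ldots,|A_{j,\ell}|-1\}$ with $\langle{\bf A}_j,\alpha\rangle+c_j+k_\ell=0$. Since here $|I|=n=2$ forces $I=\{1,2\}$, both linear forms $\langle{\bf A}_1,\alpha\rangle+c_1$ and $\langle{\bf A}_2,\alpha\rangle+c_2$ must vanish at nonpositive integer shifts $-k_1,-k_2$, which amounts to the matrix equation $M\alpha+\tilde c+k=0$, i.e.\ $\alpha=-M^{-1}(k+\tilde c)$. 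The admissible range of $k$ comes from examining in which of the two directions $\ell$ each row ${\bf A}_j$ provides the required vanishing factor in $P_\ell(\theta)$ or $Q_\ell(\theta)$: in the case $|a_1b_2|>|b_1a_2|$ one finds that row ${\bf A}_1$ takes care of the first coordinate (bound $k_1<|b_1|$) while row ${\bf A}_2$ handles the second (bound $k_2<|a_2|$), and symmetrically in the opposite case. This gives exactly the set $\mathcal{R}_M$ described in the statement, with cardinality $\nu(M)$, in agreement with part~(1).

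The main obstacle is proving that the bounds defining $\mathcal{R}_M$ are tight \emph{in both directions}: every $k\in\mathcal{R}_M$ actually yields a Puiseux monomial solution to ${\rm Horn}(M,\tilde c)$, and no exponent outside this range does. The ``only if'' direction requires ruling out the possibility that an exponent $\alpha=-M^{-1}(k+\tilde c)$ with $k$ outside $\mathcal{R}_M$ could still kill both Horn operators, which is done by matching the quadrant signs of ${\bf A}_1,{\bf A}_2$ with the split between $P_\ell$ and $Q_\ell$. The ``if'' direction is a routine verification that, for $k\in\mathcal{R}_M$, the Ore--Sato coefficient terminates at a single lattice point and is not the initial term of a fully supported series; here I would appeal to Lemma~6.5 in~\cite{DMS} for the polynomial/fully supported dichotomy. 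Linear independence of the $\nu(M)$ monomial solutions is then immediate, since for generic $\tilde c\in\C^2$ the initial exponents $-M^{-1}(k+\tilde c)$ are pairwise distinct modulo $\Z^2$.
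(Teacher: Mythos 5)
Your plan has the right overall shape — split the solution space into fully supported series and persistent Puiseux polynomials, count each, then identify the polynomial exponents via the vanishing conditions on $P_j$ and $Q_j$ — and it overlaps with the paper's indicated route (both invoke Theorem~\ref{thm:decompositionOfTheSolutionSpace} and results from~\cite{DMS}). But there is a genuine gap in the way you justify the counts.

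Theorem~\ref{thm:decompositionOfTheSolutionSpace} is stated under the standing hypotheses that the system is \emph{nonconfluent}, holonomic, and has a \emph{generic} parameter vector. An atomic system is confluent by definition: the two rows of a nondegenerate $2\times 2$ matrix cannot sum to zero, so condition~(\ref{nonconfluency}) fails. And Theorem~\ref{thm:bivariateAtomicHorn} is asserted for \emph{every} $\tilde c\in\C^2$, not just generic $\tilde c$. So citing Theorem~\ref{thm:decompositionOfTheSolutionSpace}(2) and~(4) off the shelf is not valid: those statements simply do not apply to ${\rm Horn}(M,\tilde c)$. Worse, the proof of Theorem~\ref{thm:decompositionOfTheSolutionSpace}(2) explicitly \emph{uses} the fact that each atomic piece ${\rm Horn}({\bf A}_I,\alpha)$ carries $|\det{\bf A}_I|$ fully supported series as an ingredient — the very count you are trying to establish — so deriving the atomic count from that theorem would be circular. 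You need an independent source for the fully supported count, for example the residue computation of Theorem~\ref{thm:MBSolutionToHorn} applied to the $n=m=2$ case. The ``for any $\tilde c$'' claim also needs the observation, made in Section~\ref{sec:polynomialBases} but missing from your proposal, that an atomic system is never resonant (there can be no integer relation among the two linearly independent rows of $M$), so there are no degenerate parameter values to worry about. For comparison, the paper deduces the theorem from Theorem~\ref{thm:decompositionOfTheSolutionSpace}(2) together with Theorems~2.5 and~5.3 and Lemma~6.5 of~\cite{DMS} plus the GKZ rank formula — a different set of citations from your Theorem~6.6 of~\cite{DMS} and Proposition~\ref{prop:persistentPolSol}, and one in which the atomic-system facts are imported rather than re-derived from the decomposition theorem. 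Your sketch of part~(2) via Proposition~\ref{prop:persistentPolSol} and the quadrant analysis of $P_\ell,Q_\ell$ is headed in a reasonable direction, but the crucial step — showing that the admissible $k$'s are exactly $\mathcal{R}_M$ and nothing more — is only gestured at, and it is precisely this step that produces the $\nu(M)$ count and the asymmetric bounds $u<|b_1|$, $v<|a_2|$ (versus $u<|a_1|$, $v<|b_2|$) depending on the sign of $|a_1b_2|-|b_1a_2|$.
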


\begin{proof}
{\rm(1)}  By \cite[Proposition~4]{Sadykov-Journal of SFU} the system
${\operatorname{Horn}}(M,\tilde{c})$ admits a solution of the
following form for a suitable cycle ${\mathcal{C}}$:
\begin{equation}
\begin{split}
\label{eq4.1}
& \frac{|{\det(M)}|}{(2\pi i)^2}\int_{\mathcal{C}}
\Gamma(a_1s_1+b_1s_2+\tilde{c}_1)\Gamma(a_2s_1+b_2s_2+\tilde{c}_2)
x_{1}^{s_1}x_{2}^{s_2}\,ds_1\,ds_2
\\
& \qquad
=\sum_{k\in\mathbb Z_{\ge0}^2}\frac{{(-1)}^{|k|}}{k!}
x^{-M^{-1}(k+\tilde{c})}
=x^{-M^{-1}\tilde{c}}\sum_{k\in\mathbb Z_{\ge0}^2}\frac{1}{k!}
\prod_{j=1}^2(-x^{-M^{-1}e_j})^{k_j}
\\[-2mm]
& \qquad
=x^{-M^{-1}\tilde{c}}\exp\biggl(-\sum_{j=1}^{2}x^{-M^{-1}e_j}\biggr).
\end{split}
\end{equation}

The dimension of the linear span of the set of all analytic
continuations of~\eqref{eq4.1}, i.e. the space of the fully
supported solutions, equals $|{\det(M)}|$ since
$$
\text{G.\,C.\,D.}(\det(M),a_1,b_1,a_2,b_2)=1.
$$

By \cite[Lemma~6.5]{DMS} the dimension of the space of persistent
Puiseux polynomial solutions to the system in question is given
by~$\nu(M)$. We conclude that
${\operatorname{rank}}({\operatorname{Horn}}(M,\tilde{c}))
=|{\det(M)}|+\nu(M)$.

{\rm(2)} This statement follows from the construction of persistent
Puiseux polynomial solutions in \cite[Lemma~6.5]{DMS}.
\end{proof}

The support of a persistent polynomial solution to a bivariate
Horn system can be characterised as follows. After the above
Theorem~\ref{thm:bivariateAtomicHorn}, only submatrices ${\bf A}_I=({\bf A}_i,
{\bf A}_j)$ such that $\nu({\bf A}_i, {\bf A}_j) >0$ make contribution to persistent
solutions of ${\rm Horn}(A,\tilde{c}).$ In making the variable
change $x_1 \rightarrow \frac{1}{x_1}$ if necessary, we can assume
without loss of generality that ${\bf A}_i=(a_1,b_1) \in \N^2$ and
${\bf A}_j=(a_2,b_2) \in -\N^2.$ Furthermore if necessary we change the role
of $x_1$ and $x_2$ variables to restrict ourselves to the case
$|a_1 b_2| > |a_2 b_1 |.$ In this case $\mathcal{R}_{{\bf A}_I} = \{ (u,v) \in \N^2
: u < b_1, \, v < |a_2| \}.$

\begin{corollary}
\label{cor:persistentPolsol}
 Under the above mentioned normalisation setting, we
introduce the index set
$$
{\mathcal{\tilde R}_{{\mathbf A}_I}}=\bigl\{(u,v)\in\mathbb N^2\colon
0\le u<\min(a_1,b_1), \; 0\le v<\min(|a_2|,|b_2|)\bigr\},
$$
contained in~$\mathcal{R}_{{\mathbf A}_I}$.

{\rm(1)} The support of a persistent monomial solution of the
atomic system $\operatorname{Horn}({\mathbf A}_I,\tilde{c}_I)$ is
given by $\alpha\in-{{\mathbf A}_I}^{-1}({\mathcal{\tilde
R}_{{\mathbf A}_I}} +\tilde{c}_I)$.

{\rm(2)} We associate to each $\alpha_0\in-{{\mathbf
A}_I}^{-1}((\mathcal{R}_{{\mathbf A}_I} \setminus{\mathcal{\tilde
R}_{{\mathbf A}_I}})+\tilde{c}_I)$ a series of indices
$S_{\alpha_0}:=\bigcup_{k=0}^{K}\{\alpha_k\}$ that will be defined
later in the proof.

The support of a persistent polynomial solution to
$\operatorname{Horn}({\mathbf A}_I,\tilde{c}_I)$ is the union
of~$S_{\alpha_0}$ and the supports of persistent monomial
solutions.
\end{corollary}

\begin{proof}

We first remark that under the above mentioned normalisation, the
condition $\alpha\in-{{\mathbf A}_I}^{-1}(\mathcal{R}_{{\mathbf
A}_I}+\tilde{c})$ means that $P_2(\alpha)=0$ and $Q_1(\alpha)=0$.
The cardinality of the set of the lattice points satisfying this
condition is equal to $|a_2b_1|$.

{\rm(1)} If $\alpha\in-{{\mathbf A}_I}^{-1}({\mathcal{\tilde
R}_{{\mathbf A}_I}} +\tilde{c}_I)$, then
$$
\alpha\in\ker\bigl(\langle{\mathbf A}_i,\theta\rangle+\tilde{c}_i+u_i\bigr)
\cap\ker\bigl(\langle{\mathbf A}_j,\theta\rangle+\tilde{c}_j+v_j\bigr)
$$ for
$(u_i,v_j)\in{\mathcal{\tilde R}_{{\mathbf A}_I}}$, and hence the
operator $\langle{\mathbf A}_i,\theta\rangle+\tilde{c}_i+u_i$,
$u_i<\min(a_1,b_1)$ is a factor in
both~$P_1(\theta)$and~$P_2(\theta)$. In a similar way
$\langle{\mathbf A}_j,\theta\rangle+\tilde{c}_j+v_j$,
$v_j<\min(|a_2|,|b_2|)$, is a factor in both~$Q_1(\theta)$
and~$Q_2(\theta)$.

Let us set $i=1$ if $(\mathcal{R}_{{\mathbf
A}_I}\setminus{\mathcal{\tilde R}_{{\mathbf A}_I}}) \cap\mathbb
N\times\{0\}\ne\varnothing$, and define as usual $e_1=(1,0)$. We
similarly set $i=2$ if $(\mathcal{R}_{{\mathbf A}_I}
\setminus{\mathcal{\tilde R}_{{\mathbf A}_I}})\cap\{0\}\times
\mathbb N\ne\varnothing$, and $e_2=(0,1)$.

{\rm(2)} If $|b_2|<|a_2|$, the case $i=2$ arrives. Therefore there
exists~$\alpha_0$ such that $P_2(\alpha_0)=Q_1(\alpha_0)=0$, but
$Q_2(\alpha_0)\ne0$. The following equalities hold:
\begin{gather*}
H_2({\mathbf A}_I,\tilde{c}_I)x^{\alpha_0}
=(x_2P_2(\theta)-Q_2(\theta))x^{\alpha_0}
=-Q_2(\alpha_0)x^{\alpha_0},
\\
H_2({\mathbf A}_I,\tilde{c}_I)x^{\alpha_0-e_2}
=P_2(\alpha_0-e_2)x^{\alpha_0}-Q_2(\alpha_0-e_2)x^{\alpha_0-e_2}.
\end{gather*}

Let us now consider the sequence of integer points
$\alpha_0,\alpha_1=\alpha_0-e_2,\dots$ such that
$\alpha_k-\alpha_{k+1}=-e_1$ or~$e_2$. The points~$\alpha_k$ lie
inside the cone $C(i,j):=\{s\colon\langle{\mathbf
A}_j,s\rangle+\tilde{c}_j\le0\} \cap\{s\colon\langle{\mathbf
A}_i,s\rangle+\tilde{c}_i\le0\}$. This sequence must terminate at
a certain step and hence the union of all points
$\{\alpha_k\}_{k\ge0}$ defines a finite subset of~$C(i,j)$. Thus
for a finite set of integer points~$S_{\alpha_0}$ the linear
combination of polynomials $H_2({\mathbf
A}_I,\tilde{c}_I)x^{\alpha_k}$ (respectively $H_1({\mathbf
A}_I,\tilde{c}_I)x^{\alpha_k}$), $k=1,\dots,K$, is identically
equal to zero (see.~\cite[Lemma~6.5, Fig.~2]{DMS} depicting the
process that is equivalent to the construction of~$S_{\alpha_0}$).
If $|a_2|\le|b_2|$ and $a_1\ge b_1$ then ${\mathcal{\tilde
R}_{{\mathbf A}_I}}={\mathcal{ R}_{{\mathbf A}_I}}$. Thus all
persistent polynomial solutions are actually monomials.

If $|a_2|\le|b_2|$ and $a_1<b_1$, then the case $i=1$ arrives.
Similarly to the case $i=2$, we obtain a polynomial solution
supported in the set of integer points
$S_{\alpha_0}=\bigcup_{k\ge0}\{\alpha_k\}$,
$\alpha_1=\alpha_0-e_1,\dots$, such that
$\alpha_k-\alpha_{k+1}=-e_2$ or $\alpha_k-\alpha_{k+1}=e_1$.
\end{proof}

\begin{example}
\rm  The atomic hypergeometric system defined by the matrix
$$
M=\left(
\begin{array}{rr}
3 & 2  \\
 -4 & -3
\end{array}
\right)
$$
and the zero parameter vector has the form
\begin{equation}
\left\{
\begin{array}{l}
x_1 (3\theta_1 + 2\theta_2)(3\theta_1 + 2\theta_2+1)(3\theta_1 + 2\theta_2+2) - \\
 \phantom{--------} (-4\theta_1 -3 \theta_2)(-4\theta_1 -3 \theta_2+1)(-4\theta_1 -3 \theta_2+2) (-4\theta_1 -3 \theta_2+3), \\
x_2 (3\theta_1 + 2\theta_2)(3\theta_1 + 2\theta_2+1) - (-4\theta_1 -3 \theta_2)(-4\theta_1 -3 \theta_2+1)(-4\theta_1 -3 \theta_2+2).
\end{array}
\right.
\label{horn(3,2)(-4,-3)}
\end{equation}
After Theorem ~\ref{thm:bivariateAtomicHorn} (1), the dimension of
persistent solutions space is 8.

The persistent monomial solutions are given by
$$1,\, x_1^{-2}x_2^3, \, x_1^{-4}x_2^6, \, x_1^{-3}x_2^4, \, x_1^{-5}x_2^7, \, x_1^{-7}x_2^{10}. $$

The polynomials
$$
x_1^{-6}x_2^8-\frac{1}{3}x_1^{-6}x_2^9,\quad
x_1^{-9}x_2^{13}-4x_1^{-9}x_2^{12}+x_1^{-8}x_2^{13}+12x_1^{-8}x_2^{11}
$$
are the essentially polynomial persistent solutions.
We remark here that $(-6,9) \in  -{M}^{-1}\left(\mathcal{R}_{M} \setminus {\mathcal{\tilde R}_{M}}\right)$
and the solution is binomials in view of $|b_2|-|a_2|=1.$
\end{example}

Observe that any Puiseux polynomial solution to an atomic system
is necessarily persistent. This is of course not the case for an
arbitrary hypergeometric system.

\subsection{Simplicial hypergeometric configurations.}
\label{subsec:simplicialHorn}

An important special instance of a general nonconfluent Horn
system is the system defined by a matrix whose rows are the
vertices of an $n$-dimensional integer simplex. More precisely,
let $M\in GL(n,\Z)$ be an integer nondegenerate square matrix and
$\alpha\in\C^{n}$ a parameter vector. Let
$\tilde{\alpha}=(\alpha,\alpha_{n+1})\in\C^{n+1}.$ Denote by
$M_1,\ldots,M_n$ the rows of the matrix~$M$ and let
$M_{n+1}=-M_1-\ldots- M_n.$ Let~$\tilde{M}$ be the $(n+1)\times n$
matrix with the rows $M_1,\ldots,M_{n+1}.$ The (nonconfluent) Horn
system ${\rm Horn}(\tilde{M},\tilde{\alpha})$ associated with this
data will be called {\it simplicial.}
\begin{proposition} (See \cite{Sadykov-Doklady2008}.)
Let us assume that the parameter vector $\tilde{\alpha}$ is in
generic position. A holonomic simplicial hypergeometric system
${\rm Horn}(\tilde{M},\tilde{\alpha})$ admits the following
solution:
\begin{equation}
x^{-M^{-1}\alpha} \left(1 + \sum_{j=1}^{n} x^{-M^{-1} e_j} \right)^{- |\tilde{\alpha}|},
\label{simplicialSolution}
\end{equation}
where $e_j=(0,\ldots,1,\ldots,0)$ ($1$ in the~$j$-th position).
Any solution to the Horn system ${\rm
Horn}(\tilde{M},\tilde{\alpha})$ is either in the linear span of
analytic continuations of~(\ref{simplicialSolution}) or is a
persistent Puiseux polynomial. For $- |\tilde{\alpha}| \in
\N\setminus\{0\}$ the monodromy representation of ${\rm
Horn}(\tilde{M},\tilde{\alpha})$ is maximally reducible.
\label{prop:simplicialSolution}
\end{proposition}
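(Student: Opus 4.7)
The plan is to reduce to the case $M=I$ by a monomial substitution, verify directly that~(\ref{simplicialSolution}) is a solution, then use the dimension formulas of Theorem~\ref{thm:decompositionOfTheSolutionSpace} together with an explicit multinomial expansion in each chamber of the amoeba of the base of the power to identify analytic continuations of~(\ref{simplicialSolution}) with the fully supported part of the solution space. The maximally reducible case will follow because $\left(1 + \sum x^{-M^{-1}e_j}\right)^{-|\tilde{\alpha}|}$ collapses to a polynomial when $-|\tilde{\alpha}|$ is a positive integer, so every solution becomes a pure Puiseux polynomial.

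\textbf{Step 1 (Reduction and verification).} Under the monomial substitution $y_j = x^{M^{-1}e_j}$ (a biholomorphism of the complex torus since $M\in GL(n,\Z)$) one computes $\theta_k^x = \sum_j (M^{-1})_{k,j}\theta_j^y$, and therefore $\langle M_k,\theta^x\rangle = \theta_k^y$ for $k\le n$ and $\langle M_{n+1},\theta^x\rangle = -|\theta^y|$. The simplicial Horn system takes the normalised form
\[
y_k(\theta_k^y + \alpha_k)F = (-|\theta^y| + \alpha_{n+1})F, \qquad k = 1,\ldots,n,
\]
while~(\ref{simplicialSolution}) becomes $F(y) = y^{-\alpha} g(y)^{-|\tilde{\alpha}|}$ with $g(y) = 1 + \sum_{j=1}^n y_j^{-1}$. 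Since $\theta_k^y g = -y_k^{-1}$, the chain rule gives $(\theta_k^y + \alpha_k) F = |\tilde{\alpha}| F/(y_k g)$; summing over $k$ and invoking $|\alpha| + \alpha_{n+1} = |\tilde{\alpha}|$ yields $(-|\theta^y| + \alpha_{n+1}) F = |\tilde{\alpha}| F/g$, which matches $y_k$ times the previous identity. Undoing the substitution shows that~(\ref{simplicialSolution}) solves ${\rm Horn}(\tilde{M},\tilde{\alpha})$.

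\textbf{Step 2 (Fully supported solutions exhausted by continuations).} The Laurent polynomial $g(y)$ has Newton polytope $\mathrm{conv}\{0,-e_1,\ldots,-e_n\}$ with exactly $n+1$ lattice points, and the complement of its amoeba has the corresponding $n+1$ components. In each component the generalised multinomial theorem expands $g^{-|\tilde{\alpha}|}$ after factoring out whichever of the monomials $1, y_1^{-1},\ldots, y_n^{-1}$ is dominant, producing $n+1$ pairwise distinct pure fully supported Puiseux series solutions, all of which are analytic continuations of~$F$. Because $M$ is unimodular every $n\times n$ minor of $\tilde{M}$ has absolute value $|\det M|=1$, so by Theorem~\ref{thm:decompositionOfTheSolutionSpace}~(2) the set $\mathcal{F}$ of pure fully supported solutions has cardinality $n+1$. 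Hence these continuations exhaust $\mathcal{F}$, and by the decomposition $S({\rm Horn}(\tilde{M},\tilde{\alpha})) = \Psi \oplus \mathcal{F}_{x^{(0)}}$ of Theorem~\ref{thm:decompositionOfTheSolutionSpace}~(1) every solution is a linear combination of analytic continuations of~$F$ together with a persistent Puiseux polynomial. The Mellin--Barnes formalism of Theorems~\ref{thm:generalMBIntegralRepresenationForHorn}--\ref{thm:MBSolutionToHorn} provides a uniform viewpoint: the $n+1$ chamber expansions of $F$ are precisely the series obtained by collapsing the contour onto different polar arrangements of the integrand.

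\textbf{Step 3 (Maximally reducible monodromy, and the main obstacle).} When $-|\tilde{\alpha}| = k \in \N\setminus\{0\}$ the multinomial expansion of $g(y)^k$ terminates, so $F$ is a pure Puiseux polynomial; each of the $n+1$ chamber expansions from Step~2 is likewise polynomial, obtained from $F$ by a Laurent-monomial rewriting. Combined with Step~2 this forces every solution of ${\rm Horn}(\tilde{M},\tilde{\alpha})$ to be a Puiseux polynomial, hence $S({\rm Horn}(\tilde{M},\tilde{\alpha})) = \Psi(\varphi)$. By the remark following Definition~\ref{def:pureSolution} this space admits a pure basis $\{x^{v_\ell}\tilde{p}_\ell(x)\}_\ell$, and each one-dimensional subspace $\C\cdot x^{v_\ell}\tilde{p}_\ell(x)$ is monodromy invariant because a loop with winding vector $n_\gamma\in\Z^n$ around the coordinate hyperplanes acts on it by the scalar $e^{2\pi\sqrt{-1}\langle v_\ell, n_\gamma\rangle}$ (the Laurent polynomial factor being single-valued). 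The resulting direct sum decomposition yields maximal reducibility. The substantive difficulty lies entirely in Step~2: one must check that the $n+1$ multinomial expansions of $F$ are pairwise linearly independent pure fully supported series, and that the dimension count of Theorem~\ref{thm:decompositionOfTheSolutionSpace}~(2) matches exactly, so that no further fully supported solutions are missed. Steps~1 and~3 then reduce to direct calculation and the general pure-basis structure of $\Psi(\varphi)$.
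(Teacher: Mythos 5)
The paper cites \cite{Sadykov-Doklady2008} and gives no proof of its own, so the comparison must rest on internal consistency of the proposal.

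Your Step~1 is a clean and correct verification. Step~2, however, rests on a misreading of notation: the paper's $M\in GL(n,\Z)$ explicitly means an integer \emph{nondegenerate} square matrix, not a unimodular one (the text says so, and Example~\ref{ex:(-2,0)(0,-2)(2,2)simplicial} has $\det M=4$). Consequently $y=x^{M^{-1}}$ is a $|\det M|$-sheeted covering of the torus rather than a biholomorphism, and every $n\times n$ minor of $\tilde M$ has absolute value $|\det M|$, so Theorem~\ref{thm:decompositionOfTheSolutionSpace}~(2) gives $|\mathcal{F}|=(n+1)\,|\det M|$, not $n+1$. The $n+1$ chamber expansions of $F$ therefore do not exhaust $\mathcal{F}$ when $|\det M|>1$: each chamber expansion is a Puiseux series in $x$ with exponents in a coset of $M^{-1}\Z^n$, and loops around the coordinate hyperplanes produce $|\det M|$ linearly independent branches per chamber --- this is exactly what the four functions $f_1,\ldots,f_4$ of Example~\ref{ex:(-2,0)(0,-2)(2,2)simplicial} are. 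These loops are still analytic continuations of $F$, so your conclusion survives, but the count and the linear-independence check must be redone with this extra factor.

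A second gap is in Step~3. The restriction $-|\tilde\alpha|\in\N\setminus\{0\}$ is a non-generic condition, so Theorem~\ref{thm:decompositionOfTheSolutionSpace} --- on which your Step~2 leans --- cannot be invoked verbatim; the rank and the decomposition $S=\Psi\oplus\mathcal{F}_{x^{(0)}}$ can degenerate on such a subvariety. You need to argue that, for $\tilde\alpha$ generic on the hyperplane $|\tilde\alpha|=-k$, the $(n+1)|\det M|$ polynomial branches of $F$ together with $\Psi_0$ remain linearly independent, and then use semicontinuity of rank to exclude extra (logarithmic) solutions. That the genericity \emph{within the hyperplane} is essential is shown by Example~\ref{ex:(-2,0)(0,-2)(2,2)simplicial} at $c=-1$, $\alpha=(0,0)$: there $-|\tilde\alpha|=1$ but the four branches become dependent and a logarithmic solution appears, so the monodromy fails to be maximally reducible for that non-generic $\alpha$.
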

\begin{example}
\label{e4.5}
\rm The Horn system
\begin{equation}
\begin{gathered}
\label{eq:horn(1,1)(1,-2),(-2,1)}
\left\{
\begin{array}{l}
x_1 (\theta_1  + \theta_2 - 3) (\theta_1 - 2 \theta_2 - 1)  -
(-2\theta_1  + \theta_2) (-2\theta_1  + \theta_2 -1), \\
x_2 (\theta_1  + \theta_2 - 3) (-2\theta_1  + \theta_2 - 1) -
(\theta_1  - 2\theta_2) (\theta_1 -2 \theta_2 - 1)
\end{array}
\right.
\end{gathered}
\end{equation}
is holonomic with rank~4. The pure basis in its
solution space is given by the Puiseux polynomials $ 1/(x_1 x_2),\ 4 +
2x_1 + 2y + 6x_1 x_2 + x_1^2 x_2 + x_1 x_2^2, $
$$
x_1^{-2/3}x_2^{-1/3}
(5 + 10 x_1 + 30 x_1 x_2 + 20 x_1^2 x_2 + x_1^3 x_2 + 5 x_1 x_2^2 + 10 x_1^2 x_2^2),
$$
$$
x_1^{-1/3}x_2^{-2/3}
(5 + 10 x_2 + 30 x_1 x_2 + 20x_1 x_2^2 + x_1 x_2^3 + 5x_1^2 x_2 + 10x^2 x_2^2).
$$
If we consider the Mellin-Barnes integral  for the following Ore-Sato coefficient
with generic $c \in \R$ along a proper integration contour $\mathcal C$,
$$
\varphi(s) =\frac{\Gamma (-c+s_1-2 s_2-1) \Gamma (-2 s_1+s_2-1)
e^{\sqrt{-1}\, \pi (s_1+ s_2)}}{\Gamma (-s_1-s_2+4)},
$$
we get a residue that represents a fully supported solution to a
Horn system obtained as a perturbation
of~(\ref{eq:horn(1,1)(1,-2),(-2,1)}) i.e. the result of
replacement of $\theta_1 -2 \theta_2$ by $\theta_1 -2 \theta_2-c:$
$$
f_c= x_1^{-\frac{c}{3}-1} x_2^{-\frac{2 c}{3}-1} \left( x_1^{2/3}x_2^{1/3}+x_1^{1/3}x_2^{2/3} +1\right)^{5-c}.
$$
Observe that for $c=0$ we get the Puiseux polynomial solution,
$$f_0=\frac{\left( x_1^{2/3} x_2^{1/3}+x_1^{1/3}x_2^{2/3}+1\right)^5}{ x_1 x_2}.$$
The reason for this phenomenon lies in the fact that in
$\varphi(s)$  the poles of the numerator $\Gamma (-c+s_1-2 s_2-1)$
are not cancelled by those of the denominator $\Gamma (-s_1-s_2+4)$
for generic~$c.$ For $c=0$ the half-space cancellation of poles
(see Definition~\ref{halfspace}) happens and the poles are located
in the strip $\{s : -2 \leq s_1+ s_2 \leq 3 \}$.

Linear combinations of several analytic continuations of $f_0$
produce three Puiseux polynomial solutions
to~(\ref{eq:horn(1,1)(1,-2),(-2,1)}) except the first one. The
only persistent solution in this example is the Laurent monomial $
1/(x_1  x_2)$ $\in$ $\ker\; (\theta_1 -2 \theta_2-1) \cap \ker\;
(-2\theta_1 + \theta_2-1)$. It means that this solution generates
a one-dimensional irreducible subspace of $\Psi_0$ with respect to
the monodromy action.
\begin{figure}[htbp]
\vbox{
\begin{minipage}{5.7cm}
\begin{picture}(160,160)
  \put(40,0){\vector(0,1){160}}
  \put(0,40){\vector(1,0){160}}
  \put(150,28){\small $s$}
  \put(28,150){\small $t$}
  \put(140,0){\line(-1,1){140}}
  \put(10,0){\line(1,2){70}}
  \put(20,0){\line(1,2){70}}
  \put(0,10){\line(2,1){140}}
  \put(0,20){\line(2,1){140}}
  \put(20,20){\circle{7}}
  \put(40,40){\circle*{3}}
  \put(40,60){\circle*{3}}
  \put(60,40){\circle*{3}}
  \put(60,60){\circle*{3}}
  \put(60,80){\circle*{3}}
  \put(80,60){\circle*{3}}
  \put(27,33){\circle*{5}}
  \put(47,33){\circle*{5}}
  \put(47,53){\circle*{5}}
  \put(47,73){\circle*{5}}
  \put(67,53){\circle*{5}}
  \put(67,73){\circle*{5}}
  \put(87,53){\circle*{5}}
  \put(33,27){\circle{5}}
  \put(33,47){\circle{5}}
  \put(53,47){\circle{5}}
  \put(73,47){\circle{5}}
  \put(53,67){\circle{5}}
  \put(73,67){\circle{5}}
  \put(53,87){\circle{5}}
\end{picture}
\vskip0.1cm \centerline{\tiny a): The supports of solutions to (\ref{eq:horn(1,1)(1,-2),(-2,1)})}
\end{minipage}
\hskip1cm
\begin{minipage}{5cm}
\begin{picture}(80,135)
  \put(20,0){\vector(0,1){80}}
  \put(0,20){\vector(1,0){80}}
  \put(20,20){\line(1,2){20}}
  \put(20,20){\line(2,1){40}}
  \put(40,60){\line(1,-1){20}}
  \put(40,40){\circle*{3}}
  \put(40,60){\circle*{3}}
  \put(60,40){\circle*{3}}
  \put(20,20){\circle*{3}}
\end{picture}
\vskip0.1cm {\tiny b): The polygon of the Ore-Sato\\ coefficient defining (\ref{eq:horn(1,1)(1,-2),(-2,1)})}
\end{minipage}
} 
\caption{}
\label{fig:supportOfSolsToEx(1,1)(1,-2),(-2,1)}
\end{figure}
\vskip0.5cm \label{ex(1,1)(1,-2),(-2,1)}
\end{example}
\begin{example}
\rm
Let us consider the bivariate ($n=2$) simplicial hypergeometric system generated by the matrix
$$
M =
\left(
\begin{array}{rr}
-2 &  0 \\
 0 & -2
\end{array}
\right)
$$
and the vector of parameters $\tilde{\alpha}=(0,0,c)$ in the sense
of the definition in the beginning of this subsection. This choice
of the parameters does not affect the generality of the present
example since changing the first two coordinates
of~$\tilde{\alpha}$ only results in a shift of the exponent space.
This system is generated by the differential operators
\begin{equation}
\left\{
\begin{array}{l}
x_1(2\theta_1 + 2\theta_2 + c)(2\theta_1 + 2\theta_2 + c + 1) - 2\theta_1 (2\theta_1 - 1), \\
x_2(2\theta_1 + 2\theta_2 + c)(2\theta_1 + 2\theta_2 + c + 1) - 2\theta_2 (2\theta_2 - 1).
\end{array}
\right.
\label{eq:horn(-2,0)(0,-2)(2,2)}
\end{equation}
By Theorem~\ref{thm:DMStheorem} the holonomic rank of~(\ref{eq:horn(-2,0)(0,-2)(2,2)}) equals~4.
By Proposition~\ref{prop:simplicialSolution} the generating solution to~(\ref{eq:horn(-2,0)(0,-2)(2,2)})
is given by $(1 + \sqrt{x_1} + \sqrt{x_2})^{-c}.$ It follows from Theorem~\ref{thm:decompositionOfTheSolutionSpace}
that~(\ref{eq:horn(-2,0)(0,-2)(2,2)}) does not admit any persistent Puiseux polynomial solutions
and therefore for generic $c\in\C$ a basis in the space of analytic solutions to~(\ref{eq:horn(-2,0)(0,-2)(2,2)})
is given by
\begin{equation}
\left\{
\begin{array}{lcl}
f_1(c) & = & (1 + \sqrt{x_1} + \sqrt{x_2})^{-c}, \\
f_2(c) & = & (1 + \sqrt{x_1} - \sqrt{x_2})^{-c}, \\
f_3(c) & = & (1 - \sqrt{x_1} + \sqrt{x_2})^{-c}, \\
f_4(c) & = & (1 - \sqrt{x_1} - \sqrt{x_2})^{-c}.
\end{array}
\right.
\label{eq:genericSolsOf(-2,0)(0,-2)(2,2)}
\end{equation}
However, this basis degenerates for two special values of~$c,$
namely for $c=0$ (when all the basis
elements~(\ref{eq:genericSolsOf(-2,0)(0,-2)(2,2)}) are identically
equal to~$1$) and for $c=-1$ (when $f_1(-1) - f_2(-1) - f_3(-1) +
f_4(-1) \equiv 0$). Let us furnish bases in the solution space
of~(\ref{eq:horn(-2,0)(0,-2)(2,2)}) for both of these resonant
values of the parameter~$c.$

If $c=-1,$ the corresponding resonant basis is given by $f_1(-1), f_2(-1), f_3(-1)$ and
the function
$$
\tilde{\tilde{f}}_4 = \left( f_1 \log f_1  - f_2 \log f_2 - f_3
\log f_3 + f_4 \log f_4 \right)  \big|_{c=-1}.
$$
For $c=0,$ a basis in the solution space
of~(\ref{eq:horn(-2,0)(0,-2)(2,2)}) is given by $f_1(0)$ and the
three additional resonant solutions
$$
\begin{array}{l}
\tilde{f}_2 = \log (1 + \sqrt{x_1} + \sqrt{x_2}) - \log(1 + \sqrt{x_1} - \sqrt{x_2}), \\
\tilde{f}_3 = \log (1 + \sqrt{x_1} + \sqrt{x_2}) - \log(1 - \sqrt{x_1} + \sqrt{x_2}), \\
\tilde{f}_4 = \log (1 + \sqrt{x_1} + \sqrt{x_2}) - \log(1 - \sqrt{x_1} - \sqrt{x_2}).
\end{array}
$$
However, it turns out to be possible to construct a single
universal basis in the space of analytic solutions
to~(\ref{eq:horn(-2,0)(0,-2)(2,2)}) whose elements remain linearly
independent after passing to the limit as $c\rightarrow 0$ or
$c\rightarrow -1.$ This basis has the following form:
\begin{equation}
{
\begin{array}{lcl}
\hat{f}_1(c) & = & \left(1 + \sqrt{x_1} + \sqrt{x_2}\right)^{-c}, \\
\hat{f}_2(c) & = & \left((1 + \sqrt{x_1} + \sqrt{x_2})^{-c} - (1 + \sqrt{x_1} - \sqrt{x_2})^{-c}\right)/c, \\
\hat{f}_3(c) & = & \left((1 + \sqrt{x_1} + \sqrt{x_2})^{-c} - (1 - \sqrt{x_1} + \sqrt{x_2})^{-c}\right)/c, \\
\hat{f}_4(c) & = & \left((1 + \sqrt{x_1} + \sqrt{x_2})^{-c} - (1 + \sqrt{x_1} - \sqrt{x_2})^{-c} - \right. \\
             &   &  \phantom{---------.} \left. (1 - \sqrt{x_1} + \sqrt{x_2})^{-c} + (1 - \sqrt{x_1} - \sqrt{x_2})^{-c}\right)/(c + c^2).
\end{array}
}
\label{eq:univBasisFor(-2,0)(0,-2)(2,2)}
\end{equation}
It is easy to check that the functions $\hat{f}_1(c),\ldots,\hat{f}_4(c)$
are linearly independent for any $c\in\C.$

Given the basis~(\ref{eq:univBasisFor(-2,0)(0,-2)(2,2)}), it is straightforward to find the monodromy
representation of the fundamental group of the complement to the singularities of the solutions to~(\ref{eq:horn(-2,0)(0,-2)(2,2)}).
It is generated by three matrices corresponding to the loops around the coordinate axes $\{x_1=0\},$ $\{x_2=0\}$ and the
essential singularity $\{ \mathcal{S}(x):= 1 - 2 x_1 + x_1^2 - 2 x_2 - 2 x_1 x_2 + x_2^2 = 0 \}.$
These matrices are given by
$$
M_{x_1} =
\left(
\begin{array}{ccrc}
1 & 0 & -c &  0      \\
0 & 1 &  0 & -1 - c  \\
0 & 0 & -1 &  0      \\
0 & 0 &  0 & -1
\end{array}
\right),
\quad
M_{x_2} =
\left(
\begin{array}{crrc}
1 & -c &  0 &  0      \\
0 & -1 &  0 &  0      \\
0 &  0 &  1 &  -1-c   \\
0 &  0 &  0 & -1
\end{array}
\right),$$
$$ \quad M_{\mathcal{S}} = {\rm diag} (e^{-2\pi \sqrt{-1}\,
c}).
$$
\label{ex:(-2,0)(0,-2)(2,2)simplicial}
\end{example}

\subsection{Parallelepipedal hypergeometric configurations.}

Let $M\in GL(n,\Z)$ be an integer nondegenerate square matrix and
let $\alpha,\beta\in\C^{n}$ be two parameter vectors. Denote
by~$\tilde{M}$ the $2n\times n$ matrix obtained by joining
together the rows of the matrices~$M$ and~$-M.$ The rows of such a
matrix define the vertices of a parallelepiped of nonzero
$n$-dimensional volume. Let~$\tilde{\alpha}$ be the vector with
the components
$(\alpha_1,\ldots,\alpha_n,\beta_1,\ldots,\beta_n).$
It turns out that the corresponding Horn system ${\rm Horn}(\tilde{M},\tilde{\alpha})$
admits a simple basis of solutions.
\begin{proposition} (See \cite{Sadykov-Journal of SFU}.)
Let us assume that the parameter vector $\tilde{\alpha}$ is in
generic position. The holonomic hypergeometric system ${\rm
Horn}(\tilde{M},\tilde{\alpha})$ admits the following solution:
\begin{equation}
x^{-M^{-1}\alpha} \prod_{j=1}^n \left(1 + x^{-M^{-1} e_j}
\right)^{-\alpha_j - \beta_j},
\label{parallelepipedSolution}
\end{equation}
where $e_j=(0,\ldots,1,\ldots,0)$ ($1$ in the~$j$-th position).
Any solution to the hypergeometric system ${\rm Horn}(\tilde{M},\tilde{\alpha})$
is either in the linear span of analytic continuations
of~(\ref{parallelepipedSolution}) or is a persistent Puiseux
polynomial. If $-\alpha_j - \beta_j \in \N\setminus\{0\}$ for any
$j=1,\ldots,n$ then the monodromy representation of ${\rm
Horn}(\tilde{M},\tilde{\alpha})$ is maximally reducible.
\label{prop:parallelepiped}
\end{proposition}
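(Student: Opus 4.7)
The plan is to decouple the system by a monomial change of variables, solve the resulting one-variable equations, and then apply rank and monodromy arguments to establish the remaining assertions.

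First I introduce the substitution $y_j = x^{-M^{-1}e_j}$ for $j=1,\ldots,n$, which at the level of Ore-Sato coefficients corresponds to the index change $s = -M^{-1}t$. A direct calculation gives $\langle M_i, -M^{-1}t\rangle = -t_i$, so the coefficient $\prod_i \Gamma(\langle M_i, s\rangle + \alpha_i)\Gamma(\langle -M_i, s\rangle + \beta_i)$ transforms into the fully factorised product $\prod_i \Gamma(-t_i + \alpha_i)\Gamma(t_i + \beta_i)$. Consequently ${\rm Horn}(\tilde{M},\tilde{\alpha})$ decouples in $y$-coordinates into $n$ independent one-variable Horn systems $y_i(\theta+\beta_i)g = (-\theta+\alpha_i)g$. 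Substituting the ansatz $y_i^a(1+y_i)^b$ forces $a=\alpha_i$ and $b=-\alpha_i-\beta_i$; the product of these one-variable solutions, translated back to $x$-coordinates, gives (\ref{parallelepipedSolution}).

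For the second assertion I would invoke the decomposition $S({\rm Horn}(\tilde{M},\tilde{\alpha})) = \Psi_0 \oplus \mathcal{F}_{x^{(0)}}$ from Theorem~\ref{thm:decompositionOfTheSolutionSpace}(1). The dominant atomic subsystem is ${\rm Horn}(M,\alpha)$, which by Theorem~\ref{thm:bivariateAtomicHorn}(1) (and its higher-dimensional counterpart in Theorem~\ref{thm:decompositionOfTheSolutionSpace}(2)) contributes $|\det M|$ fully supported Puiseux series with initial exponents in $-M^{-1}(\N^n+\alpha)/\Z^n$. One then identifies the Puiseux expansions of the $|\det M|$ analytic branches of (\ref{parallelepipedSolution}) with exactly these series, so any solution outside their span must lie in $\Psi_0$. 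The main obstacle here is to show rigorously that the branches of the closed-form expression are linearly independent and realise the full list of initial exponents predicted by the atomic count with no coincidental collapses; the cleanest route is probably to match each branch with a residue of the Mellin-Barnes integral of Theorem~\ref{thm:MBSolutionToHorn} applied to ${\rm Horn}(M,\alpha)$.

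Finally, under the integrality assumption $-\alpha_j-\beta_j = N_j \in \N\setminus\{0\}$, each factor $(1+y_j)^{N_j}$ is a polynomial, so (\ref{parallelepipedSolution}) collapses to a finite Puiseux sum $\sum_{0\le k\le N} c_k\, x^{-M^{-1}(\alpha+k)}$; the same holds for every analytic continuation, and $\Psi_0$ itself consists of Puiseux polynomials. The entire solution space therefore lies inside the space of Puiseux polynomials, whose branching, as described in Section~\ref{sec:atomicHorn}, is confined to the coordinate hyperplanes and generated by pairwise commuting loops. In the basis of individual Puiseux monomials each generator acts diagonally with roots of unity as eigenvalues, so the joint eigenspaces inside the solution space are one-dimensional. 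This yields the direct-sum decomposition required by Definition~\ref{def:maxReducibleMonodromy}.
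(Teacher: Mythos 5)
Your derivation of (\ref{parallelepipedSolution}) and your sketch of the second assertion are on the right track. One remark on the first step: since $M^{-1}$ is generally non-integral, $y_j = x^{-M^{-1}e_j}$ is a branched cover of the $x$-torus, not a monomial change of coordinates in $GL(n,\Z)$, so the claim that ${\rm Horn}(\tilde M,\tilde\alpha)$ literally "decouples in $y$-coordinates" needs to be handled with care. The safest formulation is to verify directly that the coefficient $\prod_j\binom{-\alpha_j-\beta_j}{k_j}$ of $x^{-M^{-1}(\alpha+k)}$ satisfies the Horn recurrences, or equivalently to produce (\ref{parallelepipedSolution}) as the Mellin--Barnes residue of Theorem~\ref{thm:MBSolutionToHorn} with $I$ indexing the rows of~$M$; both are routine, but neither is quite what is written.

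The third assertion has a genuine gap. The word ``therefore'' in ``The entire solution space therefore lies inside the space of Puiseux polynomials'' leans on the decomposition $S=\Psi_0\oplus\mathcal{F}_{x^{(0)}}$ and on the identification of $\mathcal{F}_{x^{(0)}}$ with the span of the pure components of (\ref{parallelepipedSolution}), both of which you established only for \emph{generic} $\tilde\alpha$. But the hypothesis $-\alpha_j-\beta_j\in\N\setminus\{0\}$ is exactly the resonance $\alpha_j+\beta_j\in\Z$ for each linearly dependent pair $\{M_j,-M_j\}$, so it is maximally non-generic. At this resonance the Mellin--Barnes integrand of Theorem~\ref{thm:MBSolutionToHorn} acquires double poles and logarithmic solutions can appear; the paper's own Example~\ref{ex:(-2,0)(0,-2)(2,2)simplicial} exhibits exactly this for the simplicial family at $c=-1$, where $-|\tilde\alpha|=1\in\N\setminus\{0\}$ yet the basis element $\tilde{\tilde{f}}_4$ contains logarithms. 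In addition, the branches of (\ref{parallelepipedSolution}) need not exhaust the fully supported part: the exponents of its Puiseux-polynomial expansion lie in $-M^{-1}\bigl(\alpha+\prod_j\{0,\ldots,N_j\}\bigr)$, whose image in $\Z^n/M\Z^n$ has at most $\prod_j(N_j+1)$ classes, which is strictly fewer than $|\det M|$ when the $N_j$ are small relative to $|\det M|$ (e.g.\ $n=2$, $|\det M|=5$, $N_1=N_2=1$ gives only four classes). So the span of (\ref{parallelepipedSolution}) and its analytic continuations together with $\Psi_0$ may be a proper invariant subspace of the solution space. To finish the proof you would have to control the complementary (possibly logarithmic) part directly and show the monodromy is still diagonalisable there, as the simplicial example suggests, rather than appealing to a Puiseux-polynomial basis that the resonant parameter need not admit.
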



\section{Bases in the solution space of the Horn system
\label{sec:bases}}

Let us denote by~$q$ the number of vertices of the Newton
polytope of the polynomial which defines the singular hypersurface
of the hypergeometric system under study.
In this section we construct a family of~$q$ bases in the space of
fully supported solutions to that hypergeometric system.
This result will be used in Section~\ref{sec:maximallyReducibleMonodromy}
to deduce the main result of the paper.
\begin{definition}
\label{def:amoeba}
\rm
The {\it amoeba}~$\mathcal{A}_f$ of a Laurent polynomial~$f(x)$
(or of the algebraic hypersurface $f(x)=0$) is defined to be the
image of the hypersurface~$f^{-1}(0)$ under the map ${\rm Log } :
(x_1,\ldots,x_n)\mapsto (\log |x_1|,\ldots,\log |x_n|).$
\end{definition}
Let~$\mathcal{A}(\varphi)$ denote the amoeba of the singularity of
the hypergeometric system ${\rm Horn}(\varphi).$

\begin{definition}\label{def:recessionCone} \rm
For a convex set $B\subset\R^n$ its {\it recession cone}~$C_{B}$ is
defined to be $C_{B} = \{ s\in\R^n : u + \lambda s \in B, \, \forall u\in B,
\lambda\geq 0\}$. That is, the recession
cone of a convex set is the maximal element (with respect to inclusion)
in the family of those cones whose shifts are contained in this set.
\end{definition}

The following theorem (cf. the results in~\cite{gkz89} for the
Gelfand-Kapranov-Zelevinsky system) shows that for any vertex of
the Newton polygon of the singularity of a bivariate
hypergeometric function there exists a basis in the solution space
of the corresponding Horn system. This basis consists of
hypergeometric series which converge on the preimage of the amoeba
complement which corresponds to that vertex.

\begin{theorem}
{\rm (1)} For any bivariate nonconfluent Ore-Sato coefficient~$\varphi$
with generic parameters and any connected component~$M$ of
$^c\!\mathcal{A}(\varphi)$ there exists a pure Puiseux series
basis $f_{M,i},$ $i=1,\ldots, {\rm rank}({\rm Horn}(\varphi))$ in the
solution space of ${\rm Horn}(\varphi)$ such that the recession
cone of the support of $f_{M,i}$ is contained in~$-C^{\vee}_{M}.$

{\rm (2)} The domain of convergence of the series~$f_{M,i}$ contains
${\rm Log}^{-1}(M)$ for any $i=1,\ldots, {\rm rank}({\rm Horn}(\varphi)).$
\label{thm:eachComponentHasABasis}
\end{theorem}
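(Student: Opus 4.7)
The plan is to assemble the basis from the two pieces of the canonical decomposition in Theorem~\ref{thm:decompositionOfTheSolutionSpace}~(1), using the Mellin--Barnes machinery of Theorem~\ref{thm:MBSolutionToHorn} for the fully supported part and Corollary~\ref{cor:persistentPolsol} for the persistent polynomial part. Fix a base point $x^{(0)}$ with $\Log\,x^{(0)}\in M$. Theorem~\ref{thm:decompositionOfTheSolutionSpace}~(1) furnishes the decomposition $S({\rm Horn}(\varphi)) = \Psi_0(\varphi) \oplus \mathcal{F}_{x^{(0)}}$ at $x^{(0)}$. I would therefore construct the basis $\{f_{M,i}\}$ as the disjoint union of two subcollections, one for each summand, then verify dimension, purity, the recession-cone condition and convergence on $\Log^{-1}(M)$.

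\textbf{Persistent polynomial part.} For every unordered pair of linearly independent rows $\mathbf{A}_i,\mathbf{A}_j$ with $\nu(\mathbf{A}_i,\mathbf{A}_j)>0$, Corollary~\ref{cor:persistentPolsol} exhibits exactly $\nu(\mathbf{A}_i,\mathbf{A}_j)$ persistent pure Puiseux polynomial solutions, each of the form (monomial)$\cdot$(Laurent polynomial). Since their supports are finite, their recession cones equal $\{0\}\subset -C_M^{\vee}$ automatically, and each such solution is holomorphic on $(\C^*)^2 \supset \Log^{-1}(M)$. By Theorem~\ref{thm:decompositionOfTheSolutionSpace}~(4) these contribute exactly $\dim_{\C}\Psi_0=\sum_{{\bf A}_i,{\bf A}_j\ \text{lin.\ indep.}}\nu({\bf A}_i,{\bf A}_j)$ elements.

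\textbf{Fully supported part.} For each ordered multi-index $I=(i_1,i_2)\subset\{1,\ldots,m\}$ such that $\mathbf{A}_I$ is nondegenerate and $M\subset ({\bf A}_I^{-1}\R_+^2)^{\vee}$, apply Theorem~\ref{thm:MBSolutionToHorn}. For each choice of representative $\alpha_I\in\C^2$ of a coset of $\mathbf{A}_I^{-1}\Z^2$ modulo $\Z^2$ --- there are $|\det \mathbf{A}_I|$ such cosets --- the residue calculation in~(\ref{MBSolutionToHorn}) yields a pure Puiseux series solution with support in $-\mathbf{A}_I^{-1}(\N^2+\alpha_I)$. The recession cone of this support equals $-\mathbf{A}_I^{-1}\R_+^2$. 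Dualising the hypothesis $M\subset({\bf A}_I^{-1}\R_+^2)^{\vee}$ (which, since $M$ is convex and contains affine translates of $C_M$, gives $C_M\subset({\bf A}_I^{-1}\R_+^2)^{\vee}$) yields ${\bf A}_I^{-1}\R_+^2\subset C_M^{\vee}$, hence $-{\bf A}_I^{-1}\R_+^2\subset -C_M^{\vee}$, establishing the required recession-cone containment. Convergence on $\Log^{-1}(M)$ then follows from the two-sided Abel lemma (Lemma~11 of~\cite{PST}), exactly as used in the proof of Theorem~\ref{thm:decompositionOfTheSolutionSpace}~(3).

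\textbf{Counting and linear independence.} Summing the two contributions gives
\[
\sum_{\substack{I=(i_1,i_2)\\ M\subset({\bf A}_I^{-1}\R_+^2)^{\vee}}}|\det {\bf A}_I|\;+\;\sum_{{\bf A}_i,{\bf A}_j\ \text{lin.\ indep.}}\nu({\bf A}_i,{\bf A}_j)
\;=\;\dim_{\C}\mathcal{F}_{x^{(0)}}+\dim_{\C}\Psi_0\;=\;\mathrm{rank}({\rm Horn}(\varphi))
\]
by Theorem~\ref{thm:decompositionOfTheSolutionSpace}~(1),(3),(4). The persistent polynomial solutions are linearly independent of the fully supported ones because their supports are finite while the others have full-dimensional recession cone. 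Within the fully supported family, pure solutions coming from distinct cosets of $\Z^2$ (either within one ${\bf A}_I$ or across different $I$) have supports lying in distinct cosets of the lattice, so no nontrivial linear combination can vanish. The main obstacle is this last point: for two distinct multi-indices $I,J$ both satisfying the containment condition, the lattices $-{\bf A}_I^{-1}(\Z^2+\alpha_I)$ and $-{\bf A}_J^{-1}(\Z^2+\alpha_J)$ have overlapping recession cones, and one must verify, using the genericity of the parameter vector $c$ and the counting identity above, that the $|\det{\bf A}_I|+|\det{\bf A}_J|$ resulting pure series occupy pairwise distinct cosets of $\Z^2$ in $\C^2/\Z^2$, so that the totality of constructed series spans a space of the correct dimension.
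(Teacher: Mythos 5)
Your proposal takes a genuinely different route from the paper's own proof, and it is mostly sound, but it contains a real (and, to your credit, acknowledged) gap.

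\textbf{What you do differently.} The paper's proof establishes the key counting fact --- that the number of fully supported pure Puiseux series solutions converging on $\Log^{-1}(M_i)$ is the same for every component $M_i$ --- by a direct telescoping argument. It introduces a cyclic order $\prec$ on the outer normals, writes the count $S_i$ for each component as a sum of $2\times 2$ minors over pairs of normals in a certain angular range, and shows $S_1=S_2$ for adjacent components by cancelling common terms and invoking nonconfluency $\sum(a_k,b_k)=0$. It then observes that every solution Puiseux-expands at the origin and finishes by extracting pure series from possibly-impure ones via a Vandermonde/monodromy argument. You instead pull the counting entirely from Theorem~\ref{thm:decompositionOfTheSolutionSpace}: part~(1) gives $S=\Psi\oplus\mathcal{F}_{x^{(0)}}$ with the left side of fixed rank, part~(4) fixes $\dim\Psi_0$, so $\dim\mathcal{F}_{x^{(0)}}$ is independent of the chosen component $M$, and part~(3) gives its explicit form. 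You then construct the candidate basis vectors directly, one per admissible multi-index $I$ and per coset, via Theorem~\ref{thm:MBSolutionToHorn}. Your recession-cone computation ($M\subset({\bf A}_I^{-1}\R_+^2)^\vee\Rightarrow C_M\subset({\bf A}_I^{-1}\R_+^2)^\vee\Rightarrow{\bf A}_I^{-1}\R_+^2\subset C_M^\vee$ by biduality) is correct, as is the use of the two-sided Abel lemma for convergence. Your construction is more explicit than the paper's --- it produces pure series from the start, so no purification step is needed. What the paper's route buys in exchange is that it does not lean on the dimension formula of Theorem~\ref{thm:decompositionOfTheSolutionSpace}(3), whose proof in the paper is itself only a sketch; to some extent the telescoping argument in the paper's proof \emph{is} the rigorous content behind that formula, so there is a mild circularity hazard in citing it here.

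\textbf{The genuine gap.} You correctly flag that linear independence of the constructed family is not automatic: distinct multi-indices $I,J$ can produce series whose support lattices $-{\bf A}_I^{-1}(\Z^2+c_I)$ and $-{\bf A}_J^{-1}(\Z^2+c_J)$ a priori could intersect modulo $\Z^2$, especially when $I$ and $J$ share a row of $A$ (and hence a parameter $c_i$). Your argument gives exactly $\dim_{\C}\mathcal{F}_{x^{(0)}}$ candidate series, but without verifying that all their initial exponents are pairwise distinct in $\C^2/\Z^2$, the cardinality count does not deliver a basis. This can be closed: for generic $c$ the conditions $\langle{\bf A}_{i_1},\beta\rangle+c_{i_1}\in\Z$, $\langle{\bf A}_{i_2},\beta\rangle+c_{i_2}\in\Z$ determine $\beta$ up to the lattice ${\bf A}_I^{-1}\Z^2$, and imposing in addition the analogous pair for a different $J$ is an overdetermined integral relation on the entries of $c$, excluded for generic $c$; then distinct exponents modulo $\Z^2$ force distinct lattice cosets of support and hence linear independence. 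But as written, this is left as an unresolved obstacle rather than a completed step, so the proof is not finished. (The paper avoids facing this head-on because it first asserts existence of a Puiseux series basis, without insisting on purity, and only purifies afterward.) Two smaller points: your phrase ``for each choice of representative $\alpha_I\in\C^2$ of a coset of ${\bf A}_I^{-1}\Z^2$ modulo $\Z^2$'' should refer to the $|\det{\bf A}_I|$ cosets in $\C^2/\Z^2$ of the initial exponents $-{\bf A}_I^{-1}(\Z^2+c_I)$, i.e.\ the residue families singled out in the proof of Theorem~\ref{thm:decompositionOfTheSolutionSpace}(2), rather than choices of parameter; and one should record, as the paper does, the reduction to an Ore--Sato coefficient with $t_i=1$ and $U\equiv1$ before invoking Theorem~\ref{thm:MBSolutionToHorn}.
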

\begin{proof}
Let the Ore-Sato coefficient defining the Horn system be of the
form
$$
\varphi(s) = \prod_{i=1}^m \Gamma(a_i s_1 + b_i s_2 + c_{i}),
$$
where $(a_i,b_i)\in\Z^2,$ $\sum_{i=1}^m (a_i,b_i)=(0,0)$ and
$c=(c_1,\ldots,c_m)\in\C^m$ is a generic parameter vector.
By Theorem~2 in~\cite{Sadykov-Bulletin} the vectors $\{ (a_i,b_i) \}_{i=1}^{m}$ are
the normals to all sides of the polygon~$\mathcal{P}({\varphi})$
of the Ore-Sato coefficient~$\varphi$ (observe that some of them may coincide).
This theorem also implies that the number of different vectors in this set equals~$q.$
To simplify the notation, we denote the different elements in this set of
outer normals to $\mathcal{P}({\varphi})$ by $(a_1,b_1),\ldots,(a_q,b_q).$
We may without loss of generality assume that these normals are ordered counterclockwise
from $(a_1,b_1)$ to $(a_m,b_m).$
Let~$v_i$ denote the vertex of~$\mathcal{P}({\varphi})$ that joins the sides with the normals
$(a_i,b_i)$ and $(a_{i+1},b_{i+1})$ ($v_{m}$ being the vertex that
joins the first and the last sides of the polygon).
\begin{figure}[ht!]
\centering
\includegraphics[width=7cm]{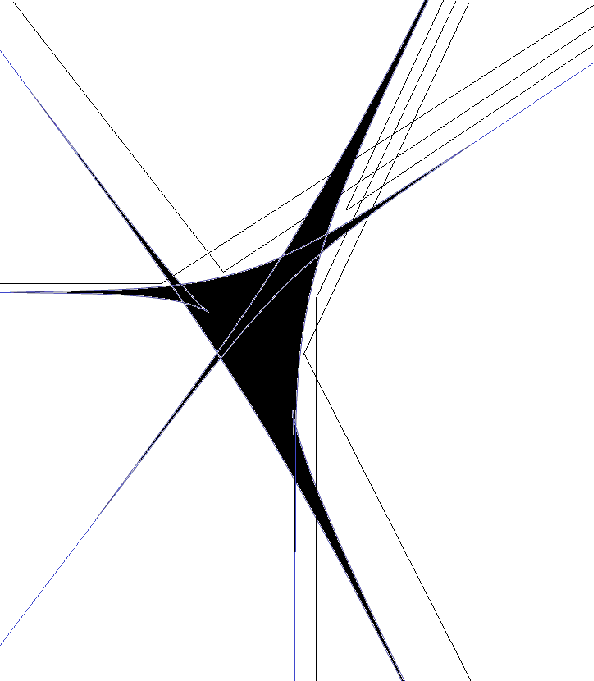}
\vskip-4cm
\hskip6cm \text{$M_1$}
\vskip4cm
\vskip-8cm
\hskip4cm \text{$M_2$}
\vskip8cm
\vskip-9cm
\hskip-2cm \text{$M_3$}
\vskip9cm
\vskip-7.5cm
\hskip-6cm \text{$M_4$}
\vskip7.5cm
\vskip-5cm
\hskip-6cm \text{$M_5$}
\vskip5cm
\vskip-3cm
\hskip-1.4cm \text{$M_6 \phantom{-----} M_7$}
\caption{\small The amoeba of the singularity of a Horn system}
\label{fig:amoebaPicture}
\end{figure}
By Theorem~7 in~\cite{PST} there is a one-to-one correspondence
between the vertices $v_1,\ldots,v_q$ and the connected components
of the complement of~$\mathcal{A}(\varphi).$
Let $M_1,\ldots, M_q$ be the connected components of the complement of~$\mathcal{A}(\varphi).$

In Figure~\ref{fig:amoebaPicture} we depict the special case of
the amoeba of the singularity of the Horn system defined by the
Ore-Sato coefficient $\Gamma(s_1 + 2 s_2) \Gamma(s_1 - 2 s_2) \Gamma(-s_1 +
3 s_2) \Gamma(-s_1 - 3 s_2) \Gamma(s_1) \Gamma(-s_1 - s_2) \Gamma(s_2).$ In this
case $q=7.$ The continuous curve that bounds the amoeba and goes
inside is its contour (see \cite{PT}). The shape of the amoeba was
found by means of the Horn-Kapranov parametrisation (\cite{Tan07})
using computer algebra system Mathematica~9.0.
Figure~\ref{fig:amoebaPicture} also shows the recession cones of
the convex hulls of the connected components of the amoeba
complement that are strongly convex and contain $M_2.$ The duals
of these cones support hypergeometric series whose domains of
convergence contain ${\rm Log}^{-1} M_2.$ To prove the theorem, we
need to show that the number of such series is independent of the
connected component of the amoeba complement.

Let us prove that for any $i=1,\ldots,q$ the number of
fully supported Puiseux series solutions to ${\rm Horn}(\varphi)$
which converge on ${\rm Log}^{-1} (M_i)$ is the same. To prove
this, we will show that the number of such series whose domain of
convergence is ${\rm Log}^{-1} (M_1)$ coincides with the number of
Puiseux series solutions that converge on ${\rm Log}^{-1} (M_2).$
Repeating this argument, one can prove that for any two adjacent
components in the complement of~$\mathcal{A}(\varphi)$ the number
of Puiseux series solutions that converge on preimages of these
components under the map ${\rm Log}$ is the same. This will prove
that any such connected component carries the same number of fully
supported Puiseux series solutions.

Let us define the single-valued branch~$\arg$ of the argument
function~${\rm Arg}$ by setting $\arg(-a_2 - b_2 \sqrt{-1})=0,$
and $\lim\limits_{\varepsilon\rightarrow 0^{-}} \arg
e^{\sqrt{-1}\,\varepsilon}(-a_2 - b_2 \sqrt{-1}) = 2\pi.$ We
introduce the partial order~$\prec$ on~$\Z^2$ by saying that
$(a,b)\prec (c,d)$ if $\arg (a + b \sqrt{-1}) < \arg (c +
d\sqrt{-1}).$ We will say that $(a,b)\preccurlyeq (c,d)$ if $\arg
(a + b \sqrt{-1}) \leq \arg (c + d \sqrt{-1}).$

By Lemma~11 in~\cite{PST} and
Theorem~\ref{thm:bivariateAtomicHorn} the number of fully
supported Puiseux series solutions to the hypergeometric system
${\rm Horn}(\varphi)$ that converge in the domain ${\rm Log}^{-1}
(M_i)$ equals
$$
S_i=\sum_{\substack{j\colon-({\bar a}_{i+1},{\bar b}_{i+1})
\prec({\bar a}_j,{\bar b}_j)\preccurlyeq({\bar a}_i,{\bar b}_i), \\
\ell\colon({\bar a}_{i+1},{\bar b}_{i+1})\preccurlyeq
({\bar a}_\ell,{\bar b}_\ell)\prec-({\bar a}_j,{\bar b}_j)}}
k_jk_\ell\begin{vmatrix}
{\bar a}_\ell & {\bar b}_\ell
\\
{\bar a}_j & {\bar b}_j
\end{vmatrix},
$$
where~$k_{j}$ is the number of elements in the set of vectors
$\{(a_1,b_1),\dots,(a_{m},b_m)\}$,that coincide with~$({\bar
a}_j,{\bar b}_j)$.
Observe that by our choice of the indices of summation all of the
involved determinants are positive. To prove that $S_1=S_2$ we
make use of the fact that these two sums have many common terms.
Indeed, the sum of terms in~$S_1$ that are not present in~$S_2$ is
given by
\begin{equation}
\label{det1}
\sum_{j\colon-({\bar a}_2,{\bar b}_2)\prec({\bar a}_j,{\bar b}_j)
\preccurlyeq({\bar a}_1,{\bar b}_1)}
\!\!\!\!\!\!k_2k_j
\begin{vmatrix}
{\bar a}_2 & {\bar b}_2
\\
{\bar a}_j & {\bar b}_j
\end{vmatrix}
=\det\biggl(k_2({\bar a}_2,{\bar b}_2),
\!\!\!\!\!\!\sum_{j\colon-({\bar a}_2,{\bar b}_2)\prec({\bar a}_j,{\bar b}_j)
\preccurlyeq({\bar a}_1,{\bar b}_1)}
\!\!\!\!\!\!k_j({\bar a}_j,{\bar b}_j)\biggr).
\end{equation}
Similarly, the sum of terms in~$S_2$ that are not present in~$S_1$
is given by
\begin{equation}
\label{det2}
\sum_{\ell\colon({\bar a}_3,{\bar b}_3)\preccurlyeq({\bar a}_\ell,
{\bar b}_\ell)\prec-({\bar a}_2,{\bar b}_2)}\!\!\!\!\!\!\!\!\!k_2k_\ell
\begin{vmatrix}
{\bar a}_\ell & {\bar b}_\ell
\\
{\bar a}_2 & {\bar b}_2
\end{vmatrix}
=\det\biggl(\sum_{\ell\colon({\bar a}_3,{\bar b}_3)\preccurlyeq
({\bar a}_\ell,{\bar b}_\ell)\prec-({\bar a}_2,{\bar b}_2)}\!\!\!\!\!\!\!\!\!
k_\ell({\bar a}_\ell,{\bar b}_\ell),k_2({\bar a}_2,{\bar b}_2)\biggr).
\end{equation}
The nonconfluency condition $\sum_{i=1}^qk_i({\bar a}_i,{\bar
b}_i)=\sum_{j=1}^m(a_j,b_j)=(0,0)$ implies
that the determinant in the right-hand side of~(\ref{det1}) equals the
determinant in right-hand side of~(\ref{det2}).
This proves that any connected component of the amoeba complement
carries equally many fully supported solutions to the Horn system.

It remains to observe that any solution of a hypergeometric system
with generic parameters can be expanded into a Puiseux series with
the center at the origin. (This series may turn out to be a
Puiseux polynomial.) Since a Puiseux polynomial solution to a Horn
system is defined everywhere except (possibly) the coordinate
hyperplanes, it works for any connected component in the
complement of the amoeba of the singularity. Thus for any such
component~$M$ there exists a Puiseux series basis in the solution
space of the Horn system all of whose elements converge (at least)
in the domain ${\rm Log}^{-1}(M).$

Now we see that we can take pure Puiseux series as a basis. For
this purpose we show that suitable linear combinations of the
analytic continuation of a solution
$$
P(x) = \sum_{k=1}^\mu
x_1^{\frac{v_{1k}}{N_1}}x_2^{\frac{v_{2k}}{N_2}} p_k(x_1, x_2)
$$
where $p_k(x),$ $k=1, \ldots, \mu$ are power series that converge
in ${\rm Log}^{-1} (M_i)$  for a fixed $i,$ $N_1, N_2 \in \N,$
$v_{1k},v_{2k} \in \Z.$ It is worthy noticing that $\mu \leq
N_1\cdot N_2.$ The result of an analytic continuation along the
loop turning around $\ell_1$ times around $x_1=0$ and $\ell_2$
times around $x_2=0$  will be
$$
(M_{x_1=0}^{\ell_1}M_{x_2=0}^{\ell_2})_{\ast} P(x) =
\sum_{k=1}^\mu e^{ \left( \frac{\ell_1 v_{1k}}{N_1} + \frac{\ell_2
v_{2k}}{N_2} \right) 2 \pi {\sqrt{-1}}}
x_1^{\frac{v_{1k}}{N_1}}x_2^{\frac{v_{2k}}{N_2}} p_k(x_1, x_2).
$$
To obtain  $x_1^{\frac{v_{1k}}{N_1}}x_2^{\frac{v_{2k}}{N_2}}
p_k(x_1, x_2)$ as a linear combination of
$(M_{x_1=0}^{\ell_1}M_{x_2=0}^{\ell_2})_{\ast} P(x), $ $ 0 \leq
\ell_1 \leq N_1-1,$ $ 0 \leq \ell_2 \leq N_2-1$ it is enough to
consider the inverse to a Vandermonde matrix of size $\mu.$ This
completes the proof of the theorem.
\end{proof}


\section{Maximally reducible monodromy}
\label{sec:maximallyReducibleMonodromy}

In this section we restrict our attention to bivariate Horn
systems. Let~$A$ be an integer $m\times 2$ matrix whose rows sum
up to the zero vector. Such a matrix, together with the vector of
parameters, defines a bivariate nonconfluent hypergeometric system
of equations. It turns out to be convenient to associate with the
matrix~$A$ the convex polygon~$\mathcal{P}$ with integer vertices
such that the outer normals to the sides of~$\mathcal{P}$ are the
rows of~$A.$ We also require that the relative length of a side
of~$\mathcal{P}$ in the integer lattice equals the number of
occurrences of the corresponding (normal) row in the matrix~$A.$
(Observe that the normals to a polygon whose lengths are adjusted
in this way sum up to zero.) According to the Minkowski theorem the polygon~$\mathcal{P}$ satisfying
these conditions is uniquely determined (up to a translation by an
integer vector) by the matrix~$A.$ Conversely, any plane convex
integer polygon~$\mathcal{P}$ defines the matrix $A(\mathcal{P})$
whose rows are the outer normals to its sides (with some of them
possibly repeated). The order of the rows of this matrix is
unimportant since they all lead to the same hypergeometric system
of equations. Thus, together with the vector of parameters~$c$,
such a polygon defines a nonconfluent hypergeometric system of
equations which we denote by ${\rm Horn}(A(\mathcal{P}),c).$ This
has been illustrated by Example~\ref{ex(1,1)(1,-2),(-2,1)}.

The results of Section~\ref{sec:polynomialBases} yield that any Horn system
defined by a matrix whose rows are the vertices of a simplex or a parallelepiped
admits a basis of Puiseux polynomials for suitable values of its parameters.
In particular, the monodromy representation of such a Horn system
(with this very particular choice of parameters) is maximally reducible.

In the paper~\cite{CDS} the authors have posed the problem of describing the
Gelfand-Kapranov-Zelevinsky hypergeometric systems (see~\cite{gkz89}),
whose solution space contains a one-di\-men\-sion\-al subspace with the trivial
action of monodromy on it. (This corresponds to the existence of a rational
solution.) In the present section, we will resolve the closely related problem
of describing the class of Horn hypergeometric systems with maximally reducible
monodromy representations.
Apart from systems with rational bases of solutions, such systems have the
simplest possible monodromy representation since the corresponding
monodromy groups are generated by diagonal matrices.

Recall that a {\it zonotope} is the Minkowski sum of segments. The main result in this
section is the following theorem.

\begin{theorem} \label{thm:zonotopesHavePolBasis}
The monodromy representation of a bivariate nonconfluent hypergeometric system
${\rm Horn}(A(\mathcal{P}),c)$ is maximally reducible for some $c\in\C^n$ if and
only if the polygon~$\mathcal{P}$ is either
1)~a zonotope; or
2)~the Minkowski sum of a triangle~$\triangle$ and an arbitrary number of segments
that are parallel to the sides of~$\triangle.$
\end{theorem}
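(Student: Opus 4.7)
The plan is to prove both directions by combining the explicit generating solutions from Section~\ref{sec:polynomialBases} with the amoeba-based basis construction of Theorem~\ref{thm:eachComponentHasABasis}. The sufficiency is in effect a common generalization of Proposition~\ref{prop:simplicialSolution} and Proposition~\ref{prop:parallelepiped}, while the necessity requires turning the algebraic condition of maximal reducibility back into a geometric constraint on~$\mathcal{P}$.

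For the sufficiency direction, I would first observe that the two propositions already handle the base cases: a triangle yields the generating solution $(1+\sum_j x^{-M^{-1}e_j})^{-|\tilde{\alpha}|}$ and a parallelogram yields $\prod_{j}(1+x^{-M^{-1}e_j})^{-\alpha_j-\beta_j}$. For a general zonotope written as a Minkowski sum $\sum_{k=1}^{r}[0,v_k]$ of segments, I would propose a generating solution of the shape $x^{\alpha}\prod_{k=1}^{r}(1+x^{w_k})^{-N_k}$, one factor for each pair of opposite outer normals, and verify directly by substitution that it solves ${\rm Horn}(A(\mathcal{P}),c)$ for a suitable matching of parameters. Choosing the $c_i$ so that each $N_k\in\N\setminus\{0\}$ turns every factor into an honest Laurent polynomial; hence the monodromy around each hypersurface $1+x^{w_k}=0$ is trivial, and the only nontrivial monodromy comes from the leading Puiseux monomial~$x^\alpha$, which by the discussion in Section~\ref{subsec:intertwiningOperators} is diagonal on a pure basis. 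The mixed case (triangle plus parallel segments) is handled by multiplying the simplicial factor by the segment factors, again producing a generating solution with purely diagonal nontrivial monodromy once the segment exponents are chosen to be negative integers.

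For the necessity direction I would argue by contrapositive. Assuming maximal reducibility for some~$c$, I would use Theorem~\ref{thm:eachComponentHasABasis} to obtain a pure Puiseux series basis on every connected component of~$^c\!\mathcal{A}(\varphi)$, and use Proposition~\ref{prop:intertwiningOperators} together with Corollary~\ref{cor:equivalentRepresentations} to reduce to parameters that are convenient for the subsequent Mellin--Barnes analysis. The persistent Puiseux polynomial subspace $\Psi_0$ carries a diagonal monodromy automatically, so the obstruction to maximal reducibility is concentrated in the fully supported part~$\mathcal{F}_{x^{(0)}}$. Using the rank formula of Theorem~\ref{thm:DMStheorem} and the multi-residue representations of Theorem~\ref{thm:MBSolutionToHorn}, I would enumerate the possible configurations of distinct outer normals to~$\mathcal{P}$ and show, component by component of the amoeba complement, that whenever $\mathcal{P}$ contains three pairwise non-parallel edges not all belonging to a single triangle inscribed in~$\mathcal{P}$, at least one fully supported basis element necessarily acquires a logarithm- or polylogarithm-type branch upon analytic continuation (compare Example~\ref{ex(1,0)(0,1)(1,1)(-1,0)^2(0,-1)^2}), yielding a Jordan block in the monodromy and contradicting the assumed diagonal decomposition. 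The combinatorial punchline is then that the set of distinct normal directions must either split entirely into pairs of opposite vectors (zonotope) or consist of exactly three independent directions augmented by pairs of opposite vectors parallel to them (triangle plus parallel segments).

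The hardest step will be the combinatorial classification inside the necessity direction, namely ruling out every polygon that is neither a zonotope nor a triangle-plus-parallel-segments configuration. The sufficiency reduces essentially to a verification once the generating solution is guessed, and the reduction to generic parameters is a direct application of the intertwining operators from Section~\ref{subsec:intertwiningOperators}. By contrast, the classification step requires a delicate analysis of how the recession cones ${(\mathbf{A}_{I}^{-1}\R_{+}^{n})}^{\vee}$ from Theorem~\ref{thm:decompositionOfTheSolutionSpace}(3) fit together across adjacent amoeba components; I expect the cleanest route is to track, for each edge of~$\mathcal{P}$, the contribution to the Jordan structure of the monodromy generator around the corresponding component of the singular locus, and to deduce that any ``extra'' edge beyond the allowed configurations forces a nontrivial off-diagonal block.
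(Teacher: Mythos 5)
Your plan for the \emph{sufficiency} direction has a genuine internal inconsistency. You propose a single generating solution of the shape $x^{\alpha}\prod_{k=1}^{r}(1+x^{w_k})^{-N_k}$ and then deliberately choose the parameters so that every $N_k\in\N\setminus\{0\}$, turning each factor into an honest Laurent polynomial. But a Puiseux monomial times a Laurent polynomial transforms under any loop around the coordinate axes only by a scalar, and has no branching at all around $\{1+x^{w_k}=0\}$; hence the linear span of all of its analytic continuations is one-dimensional. A generating solution with this property can exist only when the holonomic rank is~$1$, while for a zonotope with $l$ pairs of parallel edges the rank grows like $\sum_{i\neq j}k_ik_j|a_ib_j|$. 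So the proposed function cannot generate the solution space as soon as the zonotope has more than two Minkowski summands. The paper does not construct a single generating solution for a general zonotope: Propositions~\ref{prop:simplicialSolution} and~\ref{prop:parallelepiped} furnish generating solutions only in the simplicial and parallelepipedal base cases, and Proposition~\ref{prop:zonotopesufficientcondition} proceeds instead by \emph{induction on the number of Minkowski summands}, using the rank formula of Theorem~\ref{thm:DMStheorem} and the notion of ``half-space cancellation of poles'' (Definition~\ref{halfspace}) to show that appending a segment adds exactly as many \emph{new Puiseux polynomial} solutions (supported in the new parallelograms of divisor intersections) as the rank increases. It is a dimension count, not a generating-solution argument, and the crucial technical device --- confining all the poles of the Ore--Sato coefficient to finitely many strips so that the relevant Mellin--Barnes residues give honest Puiseux polynomials --- is absent from your sketch.

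Your sketch of the \emph{necessity} direction is directionally compatible with the paper but misses the actual mechanism. The paper's argument (Proposition~\ref{prop:PuiseuxMaximallyRed}) does \emph{not} produce logarithmic or polylogarithmic branches from an extra edge. Rather, after normalising the matrix to the form~\eqref{eq:PuiseuxMaxRed}, it compares the \emph{dimensions} of two invariant subspaces obtained by analytic continuation: the residue subspace $S_{2,j+}$ (of dimension $|a_{j+}|$) is carried by a Mellin--Barnes contour throw to another subspace $S_{2,k-}$ (of dimension $|a_{k-}|$), and maximal reducibility forces $|a_{j+}|=|a_{k-}|$; then a second, finer dimension comparison using Theorem~\ref{thm:decompositionOfTheSolutionSpace}(3) and the overlap of the convergence domains $D_{j+,k-}\cap D_{2,j+}$ (etc.) forces $b_{j+}+b_{k-}=0$ as well, so the outer normals must occur in antipodal pairs, with the single allowed exception of the triangle direction $(-1,-1)$. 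Your ``Jordan block appears'' heuristic is plausible as motivation but does not by itself supply the required case analysis; what the proof really needs is the precise contour-throw bookkeeping and the observation that the dimensions $|\det({\bf A}_{j+},{\bf A}_{k-})|$ are incompatible with a one-dimensional decomposition unless the off-diagonal cross term vanishes. You should replace the informal ``three non-parallel edges force a logarithm'' step by this dimension-matching argument and make the reduction to the non-resonant case explicit (the paper points out that a resonant system already has a non-diagonalisable monodromy, so one may assume non-resonance from the start).
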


For instance, the zonotope in Figure~\ref{fig:zonotope} corresponds to the
matrix~(\ref{eq:zonotopeMatrix}) whose rows are the outer normals to
its sides.

Theorem~\ref{thm:zonotopesHavePolBasis} implies that any triangle defines a hypergeometric
system with a maximally reducible monodromy (for a suitable choice of the vector
of parameters). A quadrilateral defines a system with a maximally reducible monodromy
if and only if it is a trapezoid.

We divide the proof of Theorem ~\ref{thm:zonotopesHavePolBasis} into three steps.

We first give a detailed description of a key technical notion
named "half-space cancellation of poles" (Definition ~\ref{halfspace},
Lemma~\ref{cancellation}). Then we prove that each of the conditions 1),2)
is necessary and sufficient for the conclusion of the theorem to
hold (Propositions \ref{prop:zonotopesufficientcondition},~\ref{prop:PuiseuxMaximallyRed}). Finally we establish
the fact that the maximal reducibility of the monodromy is
equivalent to the existence of a Puiseux polynomial basis for a
proper choice of parameters (Corollary ~\ref{cor:MaximallyReduciblePuiseux}) with the aid of
Proposition~\ref{prop:PuiseuxMaximallyRed}.

To prove the necessity and sufficiency of the condition in
Theorem ~\ref{thm:zonotopesHavePolBasis} we will need the following auxiliary technical
notion.

\begin{definition}
\label{halfspace} We will say that the Ore-Sato coefficient $\varphi(s)
=\frac{\prod_{j=1}^a\Gamma(\alpha_j)}{\prod_{i=1}^b\Gamma(\beta_i)}$
admits a \textit{half-space cancellation of poles,} if the poles
of~$\varphi(s)$ lie in the set $\{s\colon\alpha_j(s) =\sigma,\;
\sigma\in\mathbb Z_{\le0},\; \gamma_j\le\sigma\le0\}$ for some
$\gamma_j<0$, $j\in[1,a]$.
\end{definition}

\begin{lemma}
\label{cancellation} The half-space cancellation of poles in the Ore-Sato
coefficient $\varphi(s)
=\frac{\prod_{j=1}^a\Gamma(\alpha_j)}{\prod_{i=1}^b\Gamma(\beta_i)}$
is a necessary condition for the Mellin-Barnes integral
$\operatorname{MB}(\varphi,\mathcal{C})$ to present a set of
Puiseux polynomial solutions for every contour~$\mathcal{C}$,
satisfying the conditions in Theorem~\ref{thm:MBSolutionToHorn}.
\end{lemma}

\begin{example}
\label{e6.4} Consider the function
$$
\varphi(s)=\frac{\Gamma(s_1+s_2-3)\Gamma(-s_2)}
{\Gamma(s_1+1)\Gamma(s_2+2)\Gamma(-s_2+2)}.
$$
Its poles are located on the lines
$\{s\colon-s_2=\sigma,\;\sigma=-1,0,\; s_1\ne-1,-2,\dots\}$. In
this case $\operatorname{MB}(\varphi,\mathcal{C})
=\operatorname{const}\cdot(x_1+1)^2(2x_1-3x_2+2)$, where the
contour~$\mathcal{C}$ is located around the integer lattice points
inside of $\{s\colon s_1+s_2\le3,\; 0\le s_1,\; 0\le s_2\}$.
\end{example}

We now make use of Definition~\ref{halfspace}  and Lemma~\ref{cancellation} to
prove the sufficiency of either or the conditions~1),~2).

\begin{proposition} \label{prop:zonotopesufficientcondition}
For a polygon~$\mathcal{P}$ of type 1) or 2), ${\rm
Horn}(A(\mathcal{P}),c)$ admits a Puiseux polynomial basis for
some parameter $c\in\C^n$ and hence admits a maximally reducible
monodromy representation.
\end{proposition}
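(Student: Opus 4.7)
The plan is to produce an explicit Puiseux polynomial basis in each case, extending the ansatz already established for the two building blocks in Propositions~\ref{prop:simplicialSolution} and~\ref{prop:parallelepiped}. For case~1, I would first decompose the zonotope as $\mathcal{P}=S_1+\ldots+S_k$, each segment $S_j$ contributing a pair of opposite outer normals $\pm\mathbf{v}_j\in\Z^2$; the $k=2$ case is a parallelogram, already covered. The Ore-Sato coefficient then pairs up as
$$
\varphi(s)=\prod_{j=1}^{k}\Gamma(\langle \mathbf{v}_j,s\rangle+a_j)\,\Gamma(-\langle \mathbf{v}_j,s\rangle+b_j),
$$
and I would propose the candidate
$$
F(x)=x^{\alpha}\prod_{j=1}^{k}\bigl(1+x^{\mathbf{w}_j}\bigr)^{-(a_j+b_j)}
$$
with $\mathbf{w}_j$ chosen dual to $\mathbf{v}_j$ with respect to the orientation of $\mathcal{P}$ and $\alpha$ absorbing the linear shift. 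Verification that $F$ solves $\mathrm{Horn}(A(\mathcal{P}),c)$ proceeds by identifying its Mellin transform with $\varphi(s)$ factor-by-factor (the Beta-integral identity $\mathcal{M}[(1+y)^{-\gamma}](s)=\Gamma(s)\Gamma(\gamma-s)/\Gamma(\gamma)$) and then invoking Theorem~\ref{thm:generalMBIntegralRepresenationForHorn}. Choosing each $-(a_j+b_j)\in\N$ forces $F$ to be a Puiseux polynomial.

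For case~2, the same machinery applies after replacing one parallelepipedal pair by a simplicial factor coming from Proposition~\ref{prop:simplicialSolution}. Concretely, if $\mathcal{P}=\triangle+T_1+\ldots+T_{k-1}$ with each $T_j$ a segment parallel to a side of $\triangle$, the ansatz becomes
$$
F(x)=x^{\alpha}\bigl(1+x^{\mathbf{u}_1}+x^{\mathbf{u}_2}\bigr)^{-\gamma}\prod_{j=1}^{k-1}\bigl(1+x^{\mathbf{w}_j}\bigr)^{-\gamma_j},
$$
with the three sides of $\triangle$ contributing the Ore-Sato triple $\Gamma(\langle\mathbf{v}_i,s\rangle+a_i)$, $i=1,2,3$ (summing to zero, by nonconfluency), and each $T_j$ contributing an extra gamma pair. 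Choosing $-\gamma\in\N$ and $-\gamma_j\in\N$ yields a Puiseux polynomial solution.

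Once $F$ is in hand, I would generate a full basis by taking all analytic continuations: each multivalued monomial $x^{\mathbf{w}_j}$ (and, in case~2, the implicit radicals in $x^{\mathbf{u}_1},x^{\mathbf{u}_2}$) contributes a multiplicative group of branches, exactly as in Example~\ref{ex:(-2,0)(0,-2)(2,2)simplicial}, and the fundamental-group generators around the coordinate axes and around the singular hypersurface act diagonally on the resulting finite set of branches. Maximal reducibility then follows from the discussion in Section~2 on the monodromy action on Puiseux polynomial subspaces, since the entire space is now spanned by pure Puiseux monomial-type objects.

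The main obstacle I expect is matching the number of independent branches to the holonomic rank predicted by Theorem~\ref{thm:DMStheorem}, especially for zonotopes with $k>2$ segments and for case~2 with many parallel segments. I would handle this by splitting the solution space via Theorem~\ref{thm:decompositionOfTheSolutionSpace}~(1) into persistent Puiseux polynomials and fully supported pieces, counting the contribution of each parallelepipedal factor atomically (its branches correspond bijectively to the sign choices for the associated $|\det \mathbf{A}_I|$-valued root) and, in case~2, subtracting the corrections coming from linearly dependent pairs of rows via the indices $\nu(\mathbf{A}_i,\mathbf{A}_j)$. An induction on the number of segments in the Minkowski decomposition, with Theorem~\ref{thm:bivariateAtomicHorn} as the base case and the Mellin–Barnes factorization supplying the inductive step, is the cleanest way to close this count.
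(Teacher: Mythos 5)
Your argument hinges on the explicit candidate $F(x)=x^{\alpha}\prod_{j=1}^{k}(1+x^{w_j})^{-(a_j+b_j)}$, but for a zonotope built from $k>2$ segments this $F$ does \emph{not} solve ${\rm Horn}(A(\mathcal{P}),c)$, so the rest of the plan has nothing to continue from. The Beta-integral identity you invoke produces the Ore--Sato coefficient only when the exponent vectors $w_j$ are linearly independent, so that a monomial change of variables $y_j=x^{w_j}$ turns the Mellin transform of $F$ into a product of one-variable transforms; that is exactly the parallelepipedal case $k=n$ of Proposition~\ref{prop:parallelepiped}. In two variables with $k>2$ segments the $w_j$ are forcibly dependent, the Mellin transform of $F$ becomes a Mellin convolution rather than a product of Gamma factors, and $F$ violates the contiguity relations. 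One can see the failure by hand: with normals $\pm(1,0),\,\pm(0,1),\,\pm(1,1)$ and all $c_i=-1/2$ the candidate $x_1^{p}x_2^{q}(1+x_1)(1+x_2)(1+x_1x_2)$ has the interior coefficient $2$ at $x_1^{p+1}x_2^{q+1}$; the coefficient recursions $c_{m-e_1}P_1(m-e_1)=c_m Q_1(m)$ at $m=(p+1,q)$, $(p+1,q+1)$ and $(p+2,q+1)$ are then mutually inconsistent for every choice of $(p,q)$. The same obstruction hits your case-2 candidate.

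The paper accordingly does not build a single closed-form generating function for the polygon. It argues inductively on the number of parallel classes of sides: the rank formula of Theorem~\ref{thm:DMStheorem} shows that appending a pair of opposite normals raises the holonomic rank by exactly $\sum_i r(k_iB_i,k_{l+1}B_{l+1})$, the sum of ranks of the new parallelepipedal subsystems; and the half-space cancellation of poles mechanism (Definition~\ref{halfspace}, Lemma~\ref{cancellation}), together with Theorem~\ref{thm:eachComponentHasABasis}, shows that for a suitably resonant integer choice of parameters each Mellin--Barnes residue series truncates to a Puiseux polynomial supported on one of the parallelograms cut out by a pair of divisors of the Ore--Sato coefficient. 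So the basis is assembled from many solutions supported on intersections of divisor pairs, not from branches of a single product. Your closing rank-bookkeeping paragraph is close in spirit to the paper's actual count; what is missing is the replacement of the (false) product ansatz by the residue/pole-cancellation mechanism that actually produces the polynomial solutions.
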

\begin{proof}
Let~$A$ be a $m\times 2$ matrix whose rows are the outer normals
to the sides of a zonotope normalised as described in the
beginning of this section. We will first show that there exists
$c\in\C^m$ such that the space of holomorphic solutions to the
hypergeometric system ${\rm Horn}(A,c)$ at a generic point has a
basis that consists of functions of the form $x^\alpha p(x),$
where $\alpha\in\C^n,$ and $p(x)$ is a (Taylor) polynomial. Since
the analytic continuation of such a function along any path is
proportional to itself, this will prove that the monodromy
representation of ${\rm Horn}(A,c)$ is maximally reducible.

Since the matrix~$A$ defines a zonotope, we may without loss of generality assume (possibly
after interchanging some of its rows) that it consists of blocks of the form
{\small $
B_i=\left(
\begin{array}{rr}
 a_i &  b_i \\
-a_i & -b_i \\
\end{array}
\right).
$}
Let us denote by~$k_i$ the number of occurrences of the block~$B_i$ in the matrix~$A$
and let~$l$ denote the number of different blocks.
By Theorem~\ref{thm:DMStheorem} the holonomic rank of the system ${\rm Horn}(A,c)$
equals
$$
r(A)=
\left( \sum_{i=1}^{l} k_i |a_i| \right) \left( \sum_{j=1}^{l} k_j |b_j| \right) -
\sum_{i=1}^{l} k_{i}^{2} |a_i b_i| =
\sum_{\begin{array}{c}
i,j=1 \\
i\neq j
\end{array}}^{l} k_i k_j |a_i b_j|.
$$
We will use induction with respect to~$l$ to show that the
hypergeometric system ${\rm Horn}(A,c)$ admits a Puiseux
polynomial basis in the linear space of its analytic solutions.
For $l=2$ we have a parallelogram which by
Proposition~\ref{prop:parallelepiped} (for $-\alpha_j - \beta_j \in \N\setminus\{0\}$ in  \eqref{parallelepipedSolution} ) defines a system with a
Puiseux polynomial basis in its solution space.

Let the matrix be defined through $B_{l+1}= 
\left(
\begin{array}{rr}
 a_{l+1} &  b_{l+1} \\
-a_{l+1} & -b_{l+1} \\
\end{array}
\right)$. 

Denote by~$A'$ the matrix that is obtained by appending $k_{l+1}$
copies of the block $B_{l+1}$ to the matrix~$A$ and let~$r(A')$
denote the holonomic rank of the associated Horn system. Similarly
to the above, we may without loss of generality assume that
$a_{l+1}\ne0$, $b_{l+1}\ne0$. We may also assume that the vector
$(a_{l+1},b_{l+1})$ is not proportional to $(a_i,b_i)$ for any
$i=1,\dots,l$. For if these two vectors were proportional, adding
the block~$B_{l+1}$ would be equivalent to increasing the
number~$k_i$ of occurrences of the block~$B_{i}$ in the
matrix~$A$.

Observe that appending the block~$B_{l+1}$ to the matrix~$A$ corresponds to
adding the segment $(-b_{l+1},a_{l+1})$ by Minkowski to the polygon that is defined
by the matrix~$A.$
In this case, the amoeba of the singularity of the corresponding
hypergeometric systems sprouts two new tentacles in opposite
directions. This can be seen from~\cite{PST}, Lemma~11 (two-sided
Abel's lemma). By Theorem~\ref{thm:eachComponentHasABasis} the
number of Puiseux series solutions is the same for every connected
component of its complement. We will show that for a suitable
(and, of course, a very specific) choice of the parameters of the
system these series actually turn out to be polynomials.

Under the above assumptions the holonomic rank~$r(A^{'})$ of the hypergeometric
system defined by the matrix~$A^{'}$ and a generic vector of parameters is given by
$$
r(A^{'}) =
\sum_{ \begin{array}{c}
i,j=1 \\
i\neq j
\end{array}}^{l+1} k_i k_j |a_i b_j| =
r(A) + \sum_{i=1}^{l} k_i k_{l+1} |a_i b_{l+1}| + \sum_{j=1}^{l} k_{l+1} k_j |a_{l+1} b_j|=
$$
$$
r(A) +
\sum_{i=1}^{l}
\left(
(k_i |a_i| + k_{l+1} |a_{l+1}|) (k_i |b_i| + k_{l+1} |b_{l+1}|) -
k_{i}^{2} |a_i b_i| - k_{l+1}^{2} |a_{l+1} b_{l+1}|
\right) =
$$
$$
r(A) + \sum_{i=1}^{l} r(k_i B_i, k_{l+1} B_{l+1}),
$$
where $r(k_i B_i, k_{l+1} B_{l+1})$ stands for the holonomic rank of the
parallelepipedal hypergeometric system defined by the matrix obtained by joining
together~$k_i$ copies of the block~$B_i$ and~$k_{l+1}$ copies of the block~$B_{l+1}.$
Using Proposition~\ref{prop:parallelepiped} and Theorem~\ref{thm:eachComponentHasABasis}
we conclude that adding (by Minkowski) a segment to a plane zonotope preserves the property
of the corresponding hypergeometric system to have a Puiseux polynomial basis in its space of
holomorphic solutions (for a suitable choice of the vector of parameters).

We first observe that for some
positive integer $m_{l+1}$ the poles of the meromorphic function
$$
\frac{  \Gamma (a_{l+1}s_1+ b_{l +1}s_2 + c_{l+1})}{\Gamma
(a_{l+1}s_1+ b_{l +1}s_2 + c_{l+1} + m_{l+1} + 1)}
$$
are located on the lines $\bigcup\limits_{h=0}^{m_{l+1}} \{s:
a_{l+1}s_1+ b_{l +1}s_2 + c_{l+1}+ h=0\}.$ 
 The poles of the function
$$
\prod_{i=1}^{l}\prod_{j=1}^{k_i}
\frac{\Gamma(a_{i}s_1+ b_{i}s_2+c_{i,j})}
{\Gamma(a_{i}s_1+b_{i}s_2+c_{i,j}+m_{i,j}+1)}
$$
are also located in the finite family of lines
$\bigcup_{i=1}^{l}\bigcup_{j=1}^{k_i}\bigcup_{h=0}^{m_{i,j}}
\{s\colon a_{i}s_1+b_{i}s_2+c_{i,j}+h=0\}$. We conclude that for a
suitable choice of the vector of parameters~$c$ the number of
double poles of the meromorphic function
$$
\prod_{i=1}^{l+1}\prod_{j=1}^{k_i}
\frac{\Gamma(a_{i}s_1+b_{i}s_2+c_{i,j})}
{\Gamma(a_{i}s_1+b_{i}s_2+c_{i,j}+m_{i,j}+1)}
$$
is finite. To prove this fact it suffices to choose the vector of
parameters~$c$ so that the parallelogram
$$
\Pi(i,j;k,\ell)=\bigcup_{t=0}^{1}\bigcup_{u=0}^{1}
\{s\colon a_{i}s_1+b_{i}s_2+c_{i,j}+tm_{i,j}=0,\;
a_{k}s_1+b_{k}s_2+c_{k,\ell}+um_{k,\ell}=0\}
$$
does not intersect any similar parallelogram
$\Pi(i',j';k',\ell')$, as long as
$|i-i'|+|j-j'|+|k-k'|+|\ell-\ell'|\ne0$. Remark that all double
poles of the meromorphic function
$$
\frac{\Gamma(a_{i}s_1+b_{i}s_2+c_{i,j})\Gamma(a_{k}s_1+b_{k}s_2+c_{k,\ell})}
{\Gamma(a_{i}s_1+b_{i}s_2+c_{i,j}+m_{i,j}+1)\Gamma(a_{k}s_1+b_{k}s_2
+c_{k,\ell}+m_{k,\ell}+1)},
$$
that contribute to the solutions of ${\rm Horn}(A,c)$, are
contained in  $\Pi(i,j;k,\ell)$  the parallelogram defined above thanks to the
cancellation of poles (cf.~Definition~\ref{halfspace}) of the two
factors $\Gamma(a_{i}s_1+b_{i}s_2+c_{i,j})$ and
$\Gamma(a_{k}s_1+b_{k}s_2+c_{k,\ell})$. Since a parallelogram is
the image of the square $\{(t,u)\colon0\le t\le1,\; 0\le u\le1\}$
under a linear map, it is possible to choose values of the
parameters~$c_{i,j}$, $c_{k,\ell}$, $c_{i',j'}$, $c_{k',\ell'}$ so
that $\Pi(i,j;k,\ell)$ does not intersect~$\Pi(i',j';k',\ell')$
for $(i,j;k,\ell)\ne(i',j';k',\ell')$. The set of such pairs is
finite and therefore the desired choice of parameters can always
be made.

The inductive step described above is illustrated by
Figure~\ref{fig:addingSidesToZonotope} under the assumption that
$a_i,b_i > 0$ for $i=1,2,3$. The shaded regions contain the
supports of Puiseux polynomial solutions to the Horn system
obtained by adding the block $B_3=\left(
\begin{array}{rr}
 a_3 &  b_3 \\
-a_3 & -b_3 \\
\end{array}
\right)
$
to the hypergeometric system defined by the matrix composed of the blocks $B_1$ and $B_2.$
The above rank computation shows that the Puiseux polynomial solutions emerging at the intersections
of the new (the third) pair of divisors with the initial divisors is exactly sufficient to
compensate the rank growth.
In fact, by Theorem~\ref{thm:DMStheorem} the rank of the system defined by all three pairs of divisors
equals $(a_1 + a_2 + a_3)(b_1 + b_2 + b_3) - a_1 b_1 - a_2 b_2 - a_3 b_3.$
This is exactly how many Puiseux polynomials are supported by the three parallelograms depicted in Figure~\ref{fig:addingSidesToZonotope}.
\begin{figure}[ht!]
\vbox{
\begin{minipage}{5.7cm}
\begin{picture}(160,160)
  \put(150,10){\line(-1,1){140}}
  \put(160,20){\line(-1,1){140}}
  \put(90,90){\vector(1,1){10}}
  \put(100,93){\text{\tiny $(a_3, b_3)$}}
  \put(130,30){\vector(-1,-1){10}}
  \put(100,12){\text{\tiny $(-a_3, -b_3)$}}
  \put(118,69){\vector(-1,-1){10}}
  \put(120,70){\text{\tiny  $a_2 b_3 + a_3 b_2$ solutions supported here}}
  \put(70,0){\line(-1,4){40}}
  \put(85,0){\line(-1,4){40}}
  \put(60,40){\vector(-4,-1){15}}
  \put(18,28){\text{\tiny $(-a_1, -b_1)$}}
  \put(50,140){\vector(4,1){15}}
  \put(60,150){\text{\tiny $(a_1, b_1)$}}
  \put(67,122){\vector(-1,0){21}}
  \put(70,120){\text{\tiny $a_1 b_3 + a_3 b_1$ solutions supported here}}
  \put(0,85){\line(3,-1){160}}
  \put(0,100){\line(3,-1){160}}
  \put(9,97){\vector(1,3){5}}
  \put(8,115){\text{\tiny $(a_2, b_2)$}}
  \put(15,80){\vector(-1,-3){5}}
  \put(5,55){\text{\tiny $(-a_2, -b_2)$}}
  \put(81,104){\vector(-1,-1){27}}
  \put(80,105){\text{\tiny $a_1 b_2 + a_2 b_1$ solutions supported here}}
  \put(33,147){\circle*{3}}
  \put(40,120){\circle*{3}}
  \put(53,127){\circle*{3}}
  \put(60,100){\circle*{3}}
\setshadegrid span <2pt>
\hshade    120    40    58   147       33   33   /
\hshade    100    60    60  121       40   56   /
  \put(90,70){\circle*{3}}
  \put(113,47){\circle*{3}}
  \put(120,60){\circle*{3}}
  \put(143,37){\circle*{3}}
\hshade    47    113    135   70       90   93   /
\hshade    37    143    143   48       113   135   /
  \put(49,84){\circle*{3}}
  \put(53,67){\circle*{3}}
  \put(65,78){\circle*{3}}
  \put(70,62){\circle*{3}}
\hshade    67    53    70    83         49   54   /
\hshade    62    70    70    68         53   69   /
\end{picture}
\end{minipage}
} 
\caption{\small Adding a segment to a zonotope that defines a Horn
system} \label{fig:addingSidesToZonotope}
\end{figure}
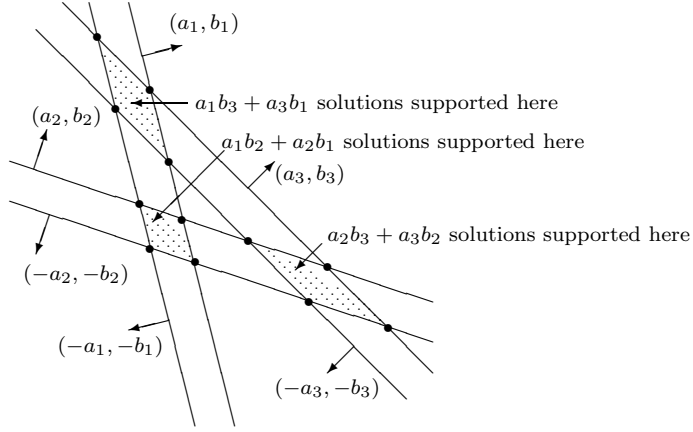

Similar arguments show that the second class of polygons in Theorem~\ref{thm:zonotopesHavePolBasis}
(the sums of triangles in the sense of Minkowski and multiples of their sides) also define\
hypergeometric systems with Puiseux polynomial bases.

Since any pure Puiseux polynomial spans a one-dimensional invariant subspace,
it follows that the monodromy representation of a hypergeometric system satisfying the conditions of
Theorem~\ref{thm:zonotopesHavePolBasis} is maximally reducible.
\end{proof}

Now we prove the necessity of the conditions 1), 2) of
Theorem~\ref{thm:zonotopesHavePolBasis}.

\begin{proposition}
If a hypergeometric system Horn($A,c$) has a maximally reducible
monodromy representation then its Ore-Sato polygon must be either
1) a zonotope or 2) the Minkowski sum of a triangle and segments
parallel to the sides of it. \label{prop:PuiseuxMaximallyRed}
\end{proposition}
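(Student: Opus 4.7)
The plan is to argue the contrapositive in three stages, exploiting the equivalence flagged in the discussion preceding this proposition: a maximally reducible monodromy representation of $\mathrm{Horn}(A,c)$ for some $c\in\C^m$ is tantamount to the existence of a pure Puiseux polynomial basis in $S(\mathrm{Horn}(A,c'))$ for a suitable $c'$ in the same parameter orbit. Stage~1 establishes this equivalence: every one-dimensional invariant subspace must be spanned by a common eigenfunction of all monodromy generators; given the genericity of $c$ and the description of monodromy on pure bases from Subsection~2.3, such an eigenfunction can only branch along the coordinate hyperplanes, which forces it to be a pure Puiseux polynomial in the sense of Definition~\ref{def:pureSolution}. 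It then suffices to show that the existence of such a polynomial basis constrains $\mathcal{P}=\mathcal{P}(A)$ to be of type 1) or 2).

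Stage~2 passes to the Mellin--Barnes side. By Theorem~\ref{thm:MBSolutionToHorn}, each basis element can be realised as $MB(\varphi,\mathcal{C}_I)$ for a suitable index set $I$, with $\varphi(s)=\prod_{i=1}^{m}\Gamma(\langle A_i,s\rangle+c_i)$. The finiteness of the support of each Puiseux polynomial in the basis forces, by Lemma~\ref{cancellation}, half-space cancellation of poles for $\varphi$ along every one of the $m$ singular lines $\{\langle A_i,s\rangle+c_i\in\Z_{\le 0}\}$. The gamma identity reveals that such truncation of an infinite pole chain can occur only in two ways: either another $\Gamma$-factor in $\varphi$ is attached to an antiparallel row $A_j=-\lambda A_i$ with $c_i,c_j$ differing by an integer (so that the two chains combine into a bounded strip), or several chains cancel collectively through the nonconfluency relation $\sum A_i=0$.

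Stage~3 translates these two cancellation mechanisms into geometric conditions on $\mathcal{P}$. Under the correspondence $A\leftrightarrow\mathcal{P}$, a pairwise antipodal partnership between rows $A_i$ and $-A_i$ contributes a Minkowski segment summand; collecting all such pairs yields a zonotope, giving option~1). The only collective cancellation compatible with the bounded-support requirement uses exactly three pairwise non-antipodal directions $A_{i_1},A_{i_2},A_{i_3}$ satisfying $k_1 A_{i_1}+k_2 A_{i_2}+k_3 A_{i_3}=0$ with $k_\ell\in\Z_{>0}$; by the definition of $\mathcal{P}$, this triple corresponds to a triangle summand. Any further rows of $A$ must then again appear in antipodal pairs, and each such pair must be parallel to a side of the triangle --- otherwise its own pole-truncation could not be achieved either antipodally or through the already saturated triangle relation --- yielding exactly option~2). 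The quantitative consistency check is performed via Theorem~\ref{thm:DMStheorem} and Theorem~\ref{thm:decompositionOfTheSolutionSpace}(4), verifying that in the allowed cases the rank equals $\dim\Psi_0$, while any other polygon shape produces a surplus of fully supported Puiseux series solutions that cannot be Puiseux polynomials.

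The main obstacle is Stage~1: excluding rigorously that maximal reducibility could be realised by one-dimensional invariant subspaces whose generators carry nontrivial branching around the non-coordinate components of the principal symbol's singular locus with non-integer exponents. This requires combining the explicit monodromy matrices on pure Puiseux bases (Subsection~2.3) with the genericity of $c$ and the rank decomposition $S(\mathrm{Horn}(A,c))=\Psi\oplus\mathcal{F}_{x^{(0)}}$ from Theorem~\ref{thm:decompositionOfTheSolutionSpace}(1) to force every basis eigenfunction into $\Psi$.
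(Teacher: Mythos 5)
Your proposal inverts the logical order of the paper, and the step you flag at the end as the ``main obstacle'' is in fact a genuine gap, not a gap that can be filled by the tools you cite. Stage~1 asserts that a common eigenfunction of all the monodromy generators ``can only branch along the coordinate hyperplanes,'' and hence is a pure Puiseux polynomial. This is false as stated: for instance, the rank-one system in Example~\ref{ex(1,1)(1,0)(0,1)} has the solution $x_1^{-c_1}x_2^{-c_2}(1-x_1-x_2)^{c_1+c_2-c_3}$, which is a common eigenfunction of all monodromy loops yet branches along the non-coordinate divisor $\{1-x_1-x_2=0\}$ for generic parameters. The monodromy description in Subsection~2.3 applies only to the invariant subspace $\Psi$ of Puiseux polynomial solutions, not to arbitrary one-dimensional invariant subspaces; genericity of $c$ makes the situation worse, not better, because generic $c$ tends to produce transcendental solutions. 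In effect, Stage~1 is asking you to prove (one direction of) Corollary~\ref{cor:MaximallyReduciblePuiseux} before you have Proposition~\ref{prop:PuiseuxMaximallyRed} in hand, but in the paper that corollary is a consequence of the proposition together with Proposition~\ref{prop:zonotopesufficientcondition}, not a stepping stone to it.

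The paper's argument sidesteps this entirely. It normalizes $A$ to the form~(\ref{eq:PuiseuxMaxRed}), represents solutions as Mellin--Barnes integrals $MB(\varphi_{2,j+,k-},\mathcal{C})$, and compares the dimensions of the Puiseux-series subspaces $S_{2,j+}$ (of dimension $a_{j+}$) and $S_{2,k-}$ (of dimension $|a_{k-}|$) obtained by residue expansions on different contours, related to each other by a Mellin--Barnes contour throw. Maximal reducibility forces analytic continuation to preserve these dimensions, giving $a_{j+}=-a_{k-}$, and a further convergence-domain comparison using the Abel lemma and Theorem~\ref{thm:decompositionOfTheSolutionSpace}(3) upgrades this to the antipodal pairing $(a_{j+},b_{j+})=-(a_{k-},b_{k-})$, whence the matrix has the form~(\ref{eq:trapezoid1}) or~(\ref{eq:minkowskisum1}). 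Thus no appeal to a polynomial basis is made at all. Your Stages~2--3 (half-space cancellation of poles forcing antipodal or triangle structure) resemble the intuition behind the sufficiency direction, Proposition~\ref{prop:zonotopesufficientcondition}, but the justification that ``such truncation of an infinite pole chain can occur only in two ways'' and that the collective cancellation ``uses exactly three'' directions is not established and would not follow from Lemma~\ref{cancellation} alone; the paper replaces this by the quantitative analytic-continuation argument just described.
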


\begin{proof}
To simplify the exposition we treat the case where the matrix~$A$
has the following form:
\begin{equation}
\label{eq6.1}
A'=\begin{pmatrix}
1 & 0
\\
0 & 1
\\
a_1 & b_1
\\
\dots & \dots
\\
a_r & b_r
\end{pmatrix},
\end{equation}
where $1+\sum_{j=1}^ra_j=1+\sum_{j=1}^rb_j=0$, $m=r+2$. The proof
for the general form of~$A$ can be achieved in a completely
parallel way.

As a triangle Ore-Sato polygon means condition 2) case, in the
cases that interest us further the number $r$ shall be greater
than 2 so that $m \geq 4.$ Further we shall use the notation
$\alpha_j(s) = a_j s_1 + b_j s_2.$ We consider two groups of
linear functions $\alpha_j(s)$ that are indexed by  $I_+$, $ I_- $
in such a way that  $ j+ \in I_+ $ (resp. $ k- \in I_- $) if and
only if $a_{j+} > 0$ (resp. $a_{k-} < 0$). We then remark that the
poles of $\Gamma(\alpha_{j+}(s)+ \gamma_{j+})$, $
\alpha_{j+}(s)=-m - \gamma_{j+}, m \in \Z_{\geq 0}$ (resp.
$\Gamma(\alpha_{k-}(s)+ \gamma_{k-})$, $  \alpha_{k-}(s)=-m -
\gamma_{k-}, m \in \Z_{\geq 0}$) restricted to the complex plane
$\{s\in\C^2 : s_2+ \delta_2 +n  =0,  n \in \Z_{\geq 0}  \}$ behave
like $s_1 \rightarrow - \infty$ (resp. $s_1 \rightarrow +
\infty$).

For the function
\begin{equation}
\varphi_{2,j+, k-}(s)=
\frac{\Gamma(s_2+ \delta_2)\Gamma(\alpha_{j+}(s)+ \gamma_{j+})\Gamma(\alpha_{k-}(s)+ \gamma_{k-}) }
{\Gamma(1-s_1- \delta_1)\prod_{\ell \not =j+, k-}^r \Gamma(1-\alpha_\ell(s)  - \gamma_\ell)}
\label{eq:phi2jk}
\end{equation}
we  examine the solution subspace  of $S({\rm Horn}(A',c'))$
spanned by
$$
u_{2,j+} (x) = \frac {1}{ (2 \pi \sqrt{-1})^2}
\int\limits_{\mathcal{C}_{2,j+}} \varphi_{2,j+, k-}(s) \, x^{s}
ds,
$$
and its analytic continuations. Here $ c'=( \delta_1,\delta_2,
\gamma_1, \ldots, \gamma_r)$ and
$$ {\mathcal{C}_{2,j+}} = \{s\in\C^2 : | s_2+ \delta_2 +n| =  |\alpha_{j+}(s)+ \gamma_{j+}+m| =\varepsilon,     (n, m) \in \Z_{\geq 0}^2 \}.$$
The circle radius $\varepsilon$ is chosen to be small enough so
that each disk inside the circle contains one isolated double pole
of $\varphi_{2,j+, k-}(s)$.

We remark that the space of solutions to a resonant system
${\operatorname{Horn}}(A',c')$ (see Definition~\ref{def:ResonantHornSystem}) has a
non-diagonalisable monodromy representation except in the trivial
case of a system of holonomic rank~1. That is, for such a system
at least one of the monodromy representation matrices would have a
non-trivial Jordan cell of size at least~2. Thus already it is not
maximally reducible. Therefore we may assume that ${\rm Horn}(A',c')$ is
non-resonant. This means that the solution $u_{2,j+} (x)$ can be
expanded into the Puiseux series
\begin{equation}
\sum_{(n,m) \in \midZ_{\geq 0}^2} c_{n,m} \left(\frac{x_1
^{\frac{b_{j+}}{a_{j+}}}}{x_2} \right) ^{n+\delta_2} x_1^{\frac{-m
- \gamma_{j+}}{a_{j+}}}, \label{eq:u2j}
\end{equation}
in the neighbourhood of  $( \frac{1}{x_1},  \frac{1}{x_2}
)=(0,0).$ Repeated application of the monodromy action
$\frac{1}{x_1} \rightarrow \frac{1}{e ^{2\pi \sqrt{-1}}x_1}$ to
the above series representation of $u_{2,j+} (x)$ produces
$a_{j+}$-dimensional subspace $S_{2,j+} \subset S({\rm
Horn}(A',c'))$ due to the non-degeneracy of a Vandermonde matrix.

Now we consider the analytic continuation of the Puiseux series solution  $u_{2,j+} (x)$  (\ref{eq:u2j}) to
\begin{equation}
u_{2,k-} (x) = \frac {1}{ (2 \pi \sqrt{-1})^2}
\int\limits_{\mathcal{C}_{2,k-}} \varphi_{2,j+, k-}(s) \, x^{s}
ds, \label{eq:u2k}
\end{equation}
by means of the Mellin-Barnes contour throw (See
Fig.~\ref{fig:Mellin-Barnes contour throw}).

\begin{center}
\begin{figure}[ht!]
\vbox{
\begin{minipage}{6cm}
\begin{picture}(150,150)
  \put(0,125){\vector(1,0){20}}
  \put(20,125){\line(1,0){13}}
  \put(40,125){\oval(14,18)[t]}
   \put(40,130){\circle*{2}}
  \put(47,125){\line(1,0){6}}
  \put(60,125){\oval(14,18)[t]}
   \put(60,130){\circle*{2}}
  \put(67,125){\line(1,0){6}}
  \put(80,125){\oval(14,18)[t]}
   \put(80,130){\circle*{2}}
  \put(87,125){\line(1,0){8}}
   \put(100,130){\circle*{2}}
   \put(120,130){\circle*{2}}
   \put(140,130){\circle*{2}}
   \put(10,130){\circle{2}}
   \put(30,130){\circle{2}}
   \put(50,130){\circle{2}}
   \put(70,130){\circle{2}}
   \put(90,130){\circle{2}}
  \put(95,125){\line(0,1){10}}
  \put(95,135){\line(-1,0){8}}
  \put(80,135){\oval(14,18)[t]}
  \put(73,135){\line(-1,0){6}}
  \put(60,135){\oval(14,18)[t]}
  \put(53,135){\line(-1,0){6}}
  \put(40,135){\oval(14,18)[t]}
  \put(33,135){\vector(-1,0){15}}
  \put(20,135){\line(-1,0){20}}
%
   \put(40,20){\circle*{2}}
   \put(60,20){\circle*{2}}
   \put(80,20){\circle*{2}}
   \put(100,20){\circle*{2}}
   \put(120,20){\circle*{2}}
   \put(140,20){\circle*{2}}
   \put(10,20){\circle{2}}
   \put(30,20){\circle{2}}
   \put(50,20){\circle{2}}
   \put(70,20){\circle{2}}
   \put(90,20){\circle{2}}
   \put(35,15){\line(0,1){10}}
   \put(35,15){\line(1,0){8}}
   \put(35,25){\line(1,0){8}}
   \put(50,15){\oval(14,18)[b]}
   \put(50,25){\oval(14,18)[b]}
   \put(57,15){\line(1,0){6}}
   \put(57,25){\line(1,0){6}}
   \put(70,15){\oval(14,18)[b]}
   \put(70,25){\oval(14,18)[b]}
   \put(77,15){\line(1,0){6}}
   \put(77,25){\line(1,0){6}}
   \put(90,15){\oval(14,18)[b]}
   \put(90,25){\oval(14,18)[b]}
   \put(150,15){\vector(-1,0){40}}
   \put(97,25){\vector(1,0){13}}
   \put(97,15){\line(1,0){13}}
   \put(110,25){\line(1,0){40}}
   \put(50,102){\circle{2}}
   \put(55,100){\text{\tiny :\,\, $\alpha_{j+}(s) = 0,-1,-2,\ldots $}}
   \put(50,82){\circle*{2}}
   \put(55,80){\text{\tiny :\,\, $\alpha_{k-}(s) = 0,-1,-2,\ldots $}}
   \put(20,110){\vector(1,-4){18}}
   \put(40,50){\text{\tiny Mellin-Barnes contour throw}}
\end{picture}
\end{minipage}
} 
\caption{Mellin-Barnes contour throw}
\label{fig:Mellin-Barnes
contour throw}
\end{figure}
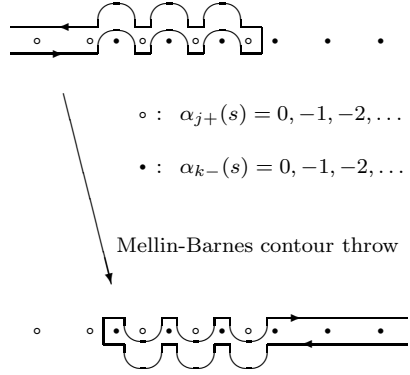
\end{center}

The above integral is calculated as the residue along the contours
$$
{\mathcal{C}_{2,k-}} = \{s\in\C^2 : | s_2+ \delta_2 +n| = |\alpha_{k-}(s)+ \gamma_{k-}+m| = \varepsilon , n,m  \in \Z_{\geq 0} \},
$$
that encircle poles on the
complex plane  $\{s\in\C^2 : s_2 + \delta_2 + n  = 0,  n \in
\Z_{\geq 0} \}$  such that $s_1 \rightarrow + \infty.$ The Puiseux expansion of $ u_{2,k-} (x)$ in the
neighbourhood of $\left( x_1, \frac{1}{x_2} \right)=(0,0)$ has the
following form:
$$
\sum_{(n,m) \in \midZ_{\geq 0}^2} d_{n, m} \left(\frac{x_1
^{\frac{b_{k-}}{a_{k-}}}}{x_2} \right) ^{n+\delta_2} x_1^{\frac{-m
- \gamma_{k-}}{a_{k-}}},
$$
with $a_{k-} <0.$ Repeated application of the monodromy action
$x_1 \rightarrow e ^{2\pi \sqrt{-1}}x_1$ to the above  series
presentation of $ u_{2,k-} (x)$ produces $|a_{k-}|$-dimensional
subspace $S_{2,k-}$ of the solution space $ S({\rm Horn}(A',c'))$ due to
non-degeneracy of a Vandermonde matrix.

Now we analyse the following analytic continuation steps:

a)  The analytic continuation of  $u_{2,j+}$ to $S_{2,k-}$ by Mellin-Barnes contour throw.

b) Monodromy action on $S_{2,k-}$ induced by the map $x_1 \mapsto
e ^{2\pi h \sqrt{-1}} x_1,$ i.e. $\varphi_{2,j+, k-}(s) \, x^{s}
\mapsto \varphi_{2,j+, k-}(s) \, e ^{2\pi h s_1 \sqrt{-1}} x^{s},$
$h \in \Z.$

c) Inverse analytic continuation of $S_{2,k-}$  to $S_{2,j+}.$

Under the condition of the maximal reducibility of monodromy, if
the above procedures a), b), c) give rise to a well-defined
non-trivial monodromy around $x_1 = \infty,$ the image of
$S_{2,j+}$ under this monodromy action has dimension $|a_{k-}|$
and hence $|a_{j+}| =|a_{k-}|.$ This means that for every $ j+ \in
I_{+},$ there exists $ k- \in I_- $ such that $a_{j+}+a_{k-}=0.$

We can apply the same argument in changing the role of $s_2$ and
$s_1,$ i.e.  $x_2$ and $x_1$ in~(\ref{eq:u2j}), (\ref{eq:u2k}) to
conclude that for every $b_{p+} >0$ there exists  $b_{q-} <0$ such
that
 $b_{p+} +  b_{q-} =0.$

Now we show a stronger assertion than the one that has been shown:
for every $j+ \in I_+ ,$ there exists $k- \in I_- $ such that
\begin{equation}
a_{j+} + a_{k-}=0,\;\;\; b_{j+} + b_{k-} = 0.
\label{eq:akbk}
\end{equation}
To prove the existence of such an index, we study the convergence domain of every possible
series defined as a residue of $\varphi_{i,j+, k-}(s) \, x^{s}$.

Let us denote by $D_{j+,k-}$ the convergence domain of the series
$$
u_{j+,k-}(x)=\sum_{n,m \geq 0}
\underset{\tiny \begin{array}{l}\alpha_{j+}(s)+ \gamma_{j+}=-n, \\
\alpha_{k-}(s)+ \gamma_{k-} =-m \end{array}}{\text{Res}}
\varphi_{i,j+, k-}(s) \, x^{s},
$$
for $i=1,2$,  $ j+ \in I_+ ,$ $ k- \in I_- .$
Here we used the notation
$$
\varphi_{1,j+, k-}(s)= \frac{\Gamma(s_1+
\delta_1)\Gamma(\alpha_{j+}(s)+ \gamma_{j+})\Gamma(\alpha_{k-}(s)+
\gamma_{k-}) } {\Gamma(1-s_2- \delta_2)\prod\limits_{\ell \not
=j+, k-}^r \Gamma(1-\alpha_\ell(s)  - \gamma_\ell)}.
$$
In a similar way, we look at the convergence domains $D_{i,j+}$ of
the series
$$
u_{i,j+}(x)=\sum_{n,m \geq 0} \underset{\tiny \begin{array}{c}
\alpha_{j+}(s)+ \gamma_{j+}=-m, \\ s_i +\delta_i=-n
\end{array}}{\text{Res}} \varphi_{i,j+, k-}(s) \, x^{s},
$$
and $D_{i,k-}$ of the  series
$$
u_{i,k-}(x)=\sum_{n,m \geq 0} \underset{\tiny
\begin{array}{c} \alpha_{k-}(s)+ \gamma_{k-}=-m, \\ s_i +\delta_i=-n
\end{array}} {\text{Res}}
\varphi_{i,j+, k-}(s) \, x^{s},
$$
for $i=1,2.$

Now we will establish the following statement: $D_{j+,k-}$ has a
nonempty intersection with at least one of the four domains
$D_{1,j+},$ $D_{2,j+},$ $D_{1,k-},$ $D_{2,k-}.$

To prove this claim we consider the supporting cones $C_{j+,k-},$
$C_{i,j+},$ and $C_{i,k-}$ of the solutions $u_{j+,k-}(x),$ $
u_{i,j+}(x), $ and $u_{i,k-}(x)$ respectively. The Abel lemma
(\cite{gkz89} Proposition 2,~\cite{PST} Lemma 1) implies the
inclusion
$$
{\rm Log}\;x^{(a,b)} - C_{a,b}^{\vee} \subset {\rm
Log}\;(D_{a,b} )
$$
for  some $ x^{(a,b)}  \in D_{a,b}$ and $(a,b) = (j+,k-)$ or
$(i,j+)$ or $ (i,k-).$ After an easy case by case study we see
that  $C_{j+,k-}^{\vee}$ has nonempty two dimensional intersection
with one of four dual cones $C_{1,j+}^{\vee},$ $C_{2,j+}^{\vee},$
$C_{1,k-}^{\vee},$ $C_{2,k-}^{\vee}.$ This proves the claim ( See Figure \ref{fig:nonempty recession cones intersection}).

Let us assume, for example, $D_{j+,k-} \cap D_{2,j+} \not =
\emptyset.$ The analytic continuation of $S_{2,j+}$ induced by a
Mellin-Barnes throw $  \mathcal{C}_{2,j+} \rightarrow
{\mathcal{C}_{j+,k-}}$ on the complex planes $\{s \in \C^2 :
\alpha_{j+}(s)+ \gamma_{j+} \in \Z_{\leq 0} \}$ produces a
$|a_{j+}(b_{j+} + b_{k-})|$-dimensional Puiseux series solution
subspace of $S({\rm Horn}(A',c'))$ convergent on  $D_{j+,k-}$ by
virtue of Theorem \ref{thm:decompositionOfTheSolutionSpace} (2).
This dimension is calculated by the following equalities,
 \begin{equation}
\left|\det \left(
\begin{array}{rr}
a_{j+} &  b_{j+} \\
a_{k-} & b_{k-}
\end{array}
\right) \right|  =  |a_{j+}(b_{j+} + b_{k-})|,
\end{equation}
where $ a_{j+} = -a_{k-}.$ On the other hand, we had already
noticed that the analytic continuation $S_{2,k-}$  of $S_{2,j+}$
induced by the Mellin-Barnes contour throw $\mathcal{C}_{2,j+}
\rightarrow  {\mathcal{C}_{2,k-}}$ on the complex planes $\{s \in
\C^2 : s_2+ \delta_{2} \in \Z_{\leq 0} \}$ has dimension $
|a_{k-}| =a_{j+}.$ Thus we obtained an analytic continuation of
$S_{2,j+}$ convergent on $D_{j+,k-} \cap D_{2,j+} \not =
\emptyset$ with dimension $a_{j+}+|a_{j+}(b_{j+} + b_{k-})|$ by
Theorem \ref{thm:decompositionOfTheSolutionSpace} (2). If the
monodromy is maximally reducible, then every analytic continuation of
$S_{2,j+}$, including the results of monodromy actions, must have
dimension $a_{j+}.$ This means that  $b_{j+} + b_{k-}=0$ and
hence~(\ref{eq:akbk}) follows.

If $D_{j+,k-} \cap D_{2,k-} \not = \emptyset,$ then the same argument as above works.

If $D_{j+,k-} \cap D_{1,j+} \not = \emptyset$ or $D_{j+,k-} \cap
D_{1,k-} \not = \emptyset,$  we interchange the roles of $x_2$ and
$x_1$ and get the equality $|b_{j+}|=|b_{j+}|+| a_{j+} (b_{j+} +
b_{k-})|,$ hence $ b_{j+} + b_{k-}=0.$  Thus again we obtain ~(\ref{eq:akbk}).

If we recall the condition $1+\sum\limits_{j=1}^r a_j = $
$1+\sum\limits_{j=1}^r b_j = 0,$ $m=r+2$ then we see that the matrix $A'$ with
maximally reduced monodromy ${\rm Horn}(A',c')$ must be either
\begin{equation}
\left(
\begin{array}{cc}
1 &  0 \\
0 &  1 \\
-1 &  0 \\
0 &  -1 \\
a_1 &  b_1 \\
-a_1 & -b_1\\
 \vdots & \vdots \\
a_{r/2-1} &  b_{r/2-1} \\
-a_{r/2-1} &  -b_{r/2-1}\\
\end{array}
\right), \;\;\; \; r:{\rm even}.
\label{eq:trapezoid1}
\end{equation}
or
\begin{equation}
\left(
\begin{array}{cc}
1 &  0 \\
0 &  1 \\
-1 &  -1 \\
a_1 &  b_1 \\
-a_1 & -b_1\\
 \vdots & \vdots \\
a_{(r-1)/2} &  b_{(r-1)/2} \\
-a_{(r-1)/2} &  -b_{(r-1)/2}\\
\end{array}
\right), \;\;\;\ r: {\rm odd}.
\label{eq:minkowskisum1}
\end{equation}

Elementary plane geometry shows that the matrix $A'$ like (\ref{eq:trapezoid1}) produces a zonotope Ore-Sato polygon.


To examine the case (\ref{eq:minkowskisum1})  we shall use the notation ${\bf A}_{1-} = (-1,-1), 1- \in I_{-}.$
For $j+ \in I_+$ we see that either $D_{j+, 1-} \cap D_{2,j+} \not = \emptyset$ or $D_{j+, 1-} \cap D_{2,1-} \not = \emptyset$ holds.

If $D_{j+, 1-} \cap D_{2,j+} \not = \emptyset$ the analytic
continuation of the solution
$$
u_{2,j+}(x)=\sum_{n,m \geq 0} \underset{\tiny
\begin{array}{c} \alpha_{j+}(s)+ \gamma_{j+}=-m, \\ s_2 +\delta_2=-n \end{array}}{\text{Res}}
\varphi_{2,1-,j+}(s) \, x^{s},
$$
to
$$
u_{j+, 1-}(x)=\sum_{n,m \geq 0} \underset{\tiny
\begin{array}{c} \alpha_{j+}(s)+ \gamma_{j+}=-m, \\ -s_1-s_2 +\gamma_{1-}=-n
\end{array}}{\text{Res}}
\varphi_{2,1-,j+}(s) \, x^{s},
$$
by Mellin-Barnes contour throw on the complex plane $\{s\in\C^2 :
\alpha_{j+}(s)+ \gamma_{j+}=-m,  m \in \Z_{\geq 0} \}.$ The
argument using
Theorem~\ref{thm:decompositionOfTheSolutionSpace}~(2) would entail
the equality $a_{j+} = a_{j+}+|a_{j+}-b_{j+}|.$ This means that
$a_{j+}-b_{j+}=0.$

If $D_{j+, 1-} \cap D_{j+, 1-} \not = \emptyset,$ the same
argument on the analytic continuation $ u_{2, 1-}(x) \rightarrow
u_{j+, 1-}(x)$ yields the equality $1= 1+|a_{j+}-b_{j+}|.$ Hence
we get $a_{j+}-b_{j+}=0$ again i.e. ${\bf A}_{j+}$ is collinear to
$(-1,-1).$ (See Fig. \ref{fig:nonempty recession cones
intersection}.)

\begin{figure}[ht!]
\begin{center}
\includegraphics[width=8cm]{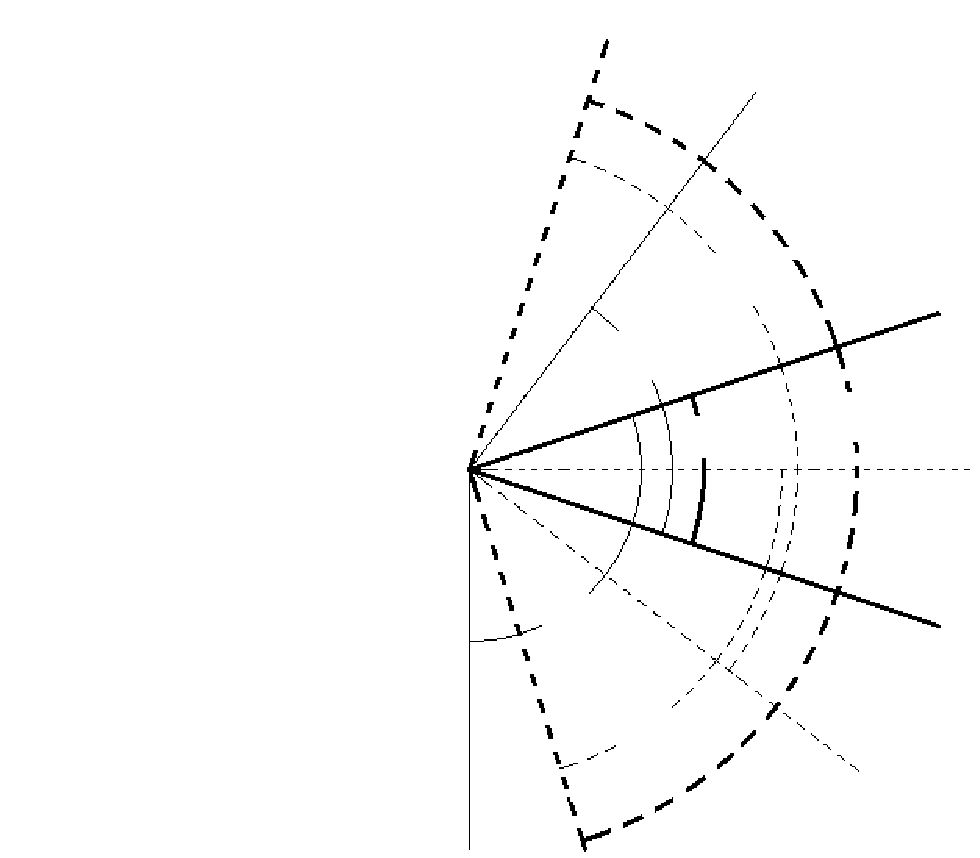}
\vskip-3.85cm \hskip4cm \text{\tiny $C_{j+,1-}$} \vskip3.85cm
\vskip-4.5cm \hskip6.2cm \text{\tiny $C_{j+,1-}^{\vee}$}
\vskip4.5cm
\vskip-6.15cm \hskip4.4cm \text{\tiny $C_{2,1-}^{\vee}$}
\vskip6.15cm
\vskip-6cm \hskip2.9cm \text{\tiny$C_{2,1-}$} \vskip6cm
\vskip-4.4cm \hskip1.7cm \text{\tiny $C_{2,j+}$} \vskip4.4cm
\vskip-3.9cm \hskip2.9cm \text{\tiny $C_{2,j+}^{\vee}$} \vskip1cm
\end{center}
\caption{Recession cones intersection}
    \label{fig:nonempty recession cones intersection}
\end{figure}

In an analogous way we can examine the analytic continuation of
$$
u_{2,1-}(x)=\sum_{n,m \geq 0} \underset{\tiny
\begin{array}{c} -s_1-s_2 +\gamma_{1-}=-m, \\ s_2 +\delta_2=-n
\end{array}}{\text{Res}}
\varphi_{2,1-,j+}(s) \, x^{s},
$$
to
$$
u_{2,k-}(x)=\sum_{n,m \geq 0} \underset{\tiny
\begin{array}{c} \alpha_{k-}(s)+ \gamma_{k-}=-m, \\ s_2 +\delta_2=-n
\end{array}}{\text{Res}}
\varphi_{2,1-,j+}(s) \, x^{s},
$$
by Mellin-Barnes contour throw along the complex planes $\{s \in \C^2 : s_2+ \delta_{2} \in \Z_{\leq 0} \}.$

In view of the relation $C_{2,1+}^{\vee} \subset C_{2,k-}^{\vee},$ we see that $ 1+ |a_{k-}| =1$ i.e. $|a_{k-}| =0$
and ${\bf A}_{k-}$ is collinear to $(0,1).$

We can now apply the same argument to the residues of
$\varphi_{1,1-,j+}(s) \, x^{s}$ and $\varphi_{1,1-,k-}(s) \,
x^{s}.$

In this way we can conclude that every row vector of the
matrix~(\ref{eq:minkowskisum1}) is collinear to one of three
vectors $(1,0), (0,1), (-1,-1).$

This means that the Ore-Sato polygon of the Horn system ${\rm
Horn}(A',c')$ with $A'$ of~(\ref{eq:minkowskisum1}) must be a
Minkowski sum of a triangle and segments parallel to the sides of
it.
\end{proof}

\begin{corollary}
A bivariate hypergeometric system Horn($A,c$) has a maximally reducible
monodromy representation if and only if the solution space of
Horn($A,\tilde{c}$) is spanned by Puiseux polynomials for some
choice of the vector of parameters $\tilde{c}$.
\label{cor:MaximallyReduciblePuiseux}
\end{corollary}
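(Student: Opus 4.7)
The plan is to combine the two preceding propositions of this section with an elementary observation about the branching of Puiseux polynomials; the corollary is essentially a repackaging of Proposition~\ref{prop:zonotopesufficientcondition} and Proposition~\ref{prop:PuiseuxMaximallyRed} with the trivial implication added.

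First I would handle the easy direction, from the existence of a Puiseux polynomial basis to maximal reducibility. After the comment following Definition~\ref{def:pureSolution}, every Puiseux polynomial solution decomposes as a finite sum of pure Puiseux polynomial solutions, so I may pass to a pure basis $\{x^{v_k}p_k(x)\}_{k=1}^r$ with $v_k\in\C^n$ and $p_k$ a Laurent polynomial. Since Laurent polynomials are single-valued on $(\C^{\ast})^n$, analytic continuation of $x^{v_k}p_k(x)$ along any loop in the complement of the coordinate hyperplanes simply multiplies it by the scalar $e^{2\pi\sqrt{-1}\langle v_k,\mu\rangle}$, where $\mu$ records the winding numbers of the loop; this is precisely the situation already described in Section~\ref{sec:atomicHorn} under the heading ``Monodromy action on the invariant subspace of Puiseux polynomial solutions.'' Each pure element therefore spans a one-dimensional monodromy invariant subspace, and the whole solution space splits as their direct sum. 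This is maximal reducibility in the sense of Definition~\ref{def:maxReducibleMonodromy}.

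For the converse, I would apply Proposition~\ref{prop:PuiseuxMaximallyRed}: the assumed maximal reducibility of ${\rm Horn}(A,c)$ forces the Ore-Sato polygon of $A$ to be either a zonotope or the Minkowski sum of a triangle and segments parallel to its sides. With this structural constraint in hand, Proposition~\ref{prop:zonotopesufficientcondition} produces, for each such polygon, a parameter vector $\tilde c$ for which ${\rm Horn}(A,\tilde c)$ admits a basis consisting of Puiseux polynomials. Chaining these two implications yields the desired parameter vector $\tilde c$.

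The main obstacle does not lie in the corollary itself, which is a formal consequence of results already proved; the real work was done earlier. Specifically, Proposition~\ref{prop:PuiseuxMaximallyRed} required the delicate Mellin-Barnes contour throw analysis together with the half-space cancellation of poles (Definition~\ref{halfspace}, Lemma~\ref{cancellation}) to deduce, by a rank-dimension comparison, that for every row $\mathbf{A}_{j+}$ of positive first coordinate there must exist a collinear opposite row, thereby forcing the polygon to fall into one of the two classes. Once that identification is available, the present corollary is immediate.
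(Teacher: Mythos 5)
Your proof is correct and takes essentially the same route as the paper: the backward direction is the ``evidently'' part (which you flesh out via the pure-basis observation from Section~\ref{sec:atomicHorn}), and the forward direction is exactly the chain Proposition~\ref{prop:PuiseuxMaximallyRed} followed by Proposition~\ref{prop:zonotopesufficientcondition}.
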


\begin{proof}
If the solution space of the system Horn($A,\tilde{c}$) is spanned by Puiseux polynomials,
evidently its monodromy is maximally reducible.

Proposition~\ref{prop:PuiseuxMaximallyRed} shows that the Ore-Sato
polygon of a hypergeometric system Horn($A,c$) with a maximally
reducible monodromy must be either a zonotope or the Minkowski sum
of a triangle and segments parallel to its sides. After
Proposition~\ref{prop:zonotopesufficientcondition},
Horn($A,\tilde{c}$) admits a Puiseux polynomial basis for a
suitably chosen parameter $\tilde{c}.$
\end{proof}

\begin{example}
\label{ex(1,2)(-1,-2),(-1,1),(1,-1),(-3,-2),(3,2),(2,-1),(-2,1)}
A random zonotope.
\rm
Let us consider the following configuration which is given by
the Minkowski sum of four segments:
\begin{equation}
A=
\left(
\begin{array}{rr}
 1 &  2 \\
-1 & -2 \\
-1 &  1 \\
 1 & -1 \\
-3 & -2 \\
 3 &  2 \\
 2 & -1 \\
-2 &  1
\end{array}
\right).
\label{eq:zonotopeMatrix}
\end{equation}
\begin{center}
\begin{figure}[htbp]
\begin{minipage}{3cm}
\begin{picture}(100,100)
  \put(10,0){\vector(0,1){90}}
  \put(0,10){\vector(1,0){80}}
  \put(50,10){\line(1,1){10}}
  \put(60,20){\line(1,2){10}}
  \put(70,40){\line(-2,3){20}}
  \put(50,70){\line(-2,1){20}}
  \put(30,80){\line(-1,-1){10}}
  \put(20,70){\line(-1,-2){10}}
  \put(10,50){\line(2,-3){20}}
  \put(30,20){\line(2,-1){20}}
  \put(50,10){\circle*{3}}
  \put(30,20){\circle*{3}}
  \put(40,20){\circle*{3}}
  \put(50,20){\circle*{3}}
  \put(60,20){\circle*{3}}
  \put(30,30){\circle*{3}}
  \put(40,30){\circle*{3}}
  \put(50,30){\circle*{3}}
  \put(60,30){\circle*{3}}
  \put(20,40){\circle*{3}}
  \put(30,40){\circle*{3}}
  \put(40,40){\circle*{3}}
  \put(50,40){\circle*{3}}
  \put(60,40){\circle*{3}}
  \put(70,40){\circle*{3}}
  \put(10,50){\circle*{3}}
  \put(20,50){\circle*{3}}
  \put(30,50){\circle*{3}}
  \put(40,50){\circle*{3}}
  \put(50,50){\circle*{3}}
  \put(60,50){\circle*{3}}
  \put(20,60){\circle*{3}}
  \put(30,60){\circle*{3}}
  \put(40,60){\circle*{3}}
  \put(50,60){\circle*{3}}
  \put(20,70){\circle*{3}}
  \put(30,70){\circle*{3}}
  \put(40,70){\circle*{3}}
  \put(50,70){\circle*{3}}
  \put(30,80){\circle*{3}}
\end{picture}
\end{minipage}
\caption{\small The zonotope which defines the
matrix~(\ref{eq:zonotopeMatrix})} \label{fig:zonotope}
\end{figure}
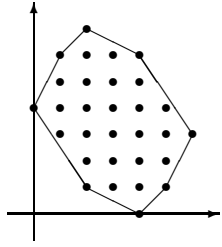
\end{center}
\noindent Choose the vector of parameters to be
$c=(3,-5,-2,1,-2,-1,-1,-1).$ The corresponding hypergeometric
system ${\rm Horn}(A,c)$ is holonomic with rank 31. Here is the
pure Puiseux polynomial basis in its solution space (which was
computed with Mathematica~9.0).
 The persistent solutions are
$x_2,x_1^3
x_2^5,\frac{\sqrt{x_1}}{x_2^{7/4}},\frac{x_1}{x_2^2},\frac{x_1^{5/2}}{x_2^{15/4}},\frac{x_1^3}{x_2^4}$
while non-persistent Puiseux polynomial solutions are
$$
\begin{array}{c}
\frac{x_1^2}{x_2^3}, \, \frac{x_1^{3/2}}{x_2^{11/4}}, \, \frac{1}{x_1^{4/5}x_2^{8/5}}, \, \frac{x_1^{2/7}}{x_2^{3/7}}, \,
\frac{\sqrt[7]{x_2}}{x_1^{3/7}}, \, \frac{x_2^{3/5}}{x_1^{2/5}}, \, \frac{x_2}{x_1}, \,
13068 x_1^2 x_2^4+18900 x_1^2 x_2^3+74529 x_1 x_2^3+715715 x_1 x_2^2,\\
\frac{54}{x_1^{4/7} \sqrt[7]{x_2}}+\frac{5x_1^{3/7}}{\sqrt[7]{x_2}}, \,
\frac{99 x_2^{3/7}}{x_1^{2/7}}-\frac{52}{x_1^{2/7} x_2^{4/7}}, \, \frac{230x_2^{5/7}}{\sqrt[7]{x_1}}-\frac{407}{\sqrt[7]{x_1} x_2^{2/7}}, \,
\frac{5}{x_2}-9,38 \sqrt[7]{x_1} x_2^{2/7}-\frac{99\sqrt[7]{x_1}}{x_2^{5/7}},\\
\frac{234 x_2^{6/5}}{x_1^{4/5}}-\frac{1463 \sqrt[5]{x_2}}{x_1^{4/5}}, \, \frac{14 x_2^{7/5}}{x_1^{3/5}}-\frac{837 x_2^{2/5}}{x_1^{3/5}}, \,
\frac{119 x_2^{4/5}}{\sqrt[5]{x_1}}-\frac{4 x_2^{4/5}}{x_1^{6/5}}, \, \frac{275}{x_1^2 x_2}-\frac{7}{x_1^3x_2}, \,
\frac{129115}{x_1^{5/3} x_2^{2/3}}-\frac{7904}{x_1^{8/3} x_2^{2/3}}, \\
\frac{203}{x_1^{7/3} \sqrt[3]{x_2}}-\frac{170}{x_1^{7/3}x_2^{4/3}}, \, \frac{22869 x_1^2}{x_2^{7/2}}+\frac{16065 x_1^2}{x_2^{5/2}}-\frac{143650 x_1}{x_2^{5/2}}, \,
-\frac{2600150 x_1^{5/2}}{x_2^{13/4}}+\frac{29637333 x_1^{3/2}}{x_2^{9/4}}+\frac{4075291 x_1^{3/2}}{x_2^{13/4}}, \\
\frac{1}{x_1 x_2^2}-\frac{7}{x_1x_2}, \, \frac{19}{x_1^{7/5} x_2^{9/5}}+\frac{143}{x_1^{2/5} x_2^{9/5}}, \,
\frac{238}{x_1^{6/5} x_2^{7/5}}+\frac{999}{\sqrt[5]{x_1}x_2^{7/5}}, \, \frac{2511}{x_1^{3/5} x_2^{6/5}}-\frac{88}{x_1^{3/5} x_2^{11/5}}.
\end{array}
$$
The following picture depicts the supports of the above solutions
to ${\rm Horn}(A,c).$ The big bullets correspond to monomials (both
persistent and not) while the small bullets to all other solutions.
The parallelograms that carry the supports arise as intersections
of the divisors of the defining Ore-Sato coefficient.
\begin{figure}[ht!]
\centering
\includegraphics[width=12cm]{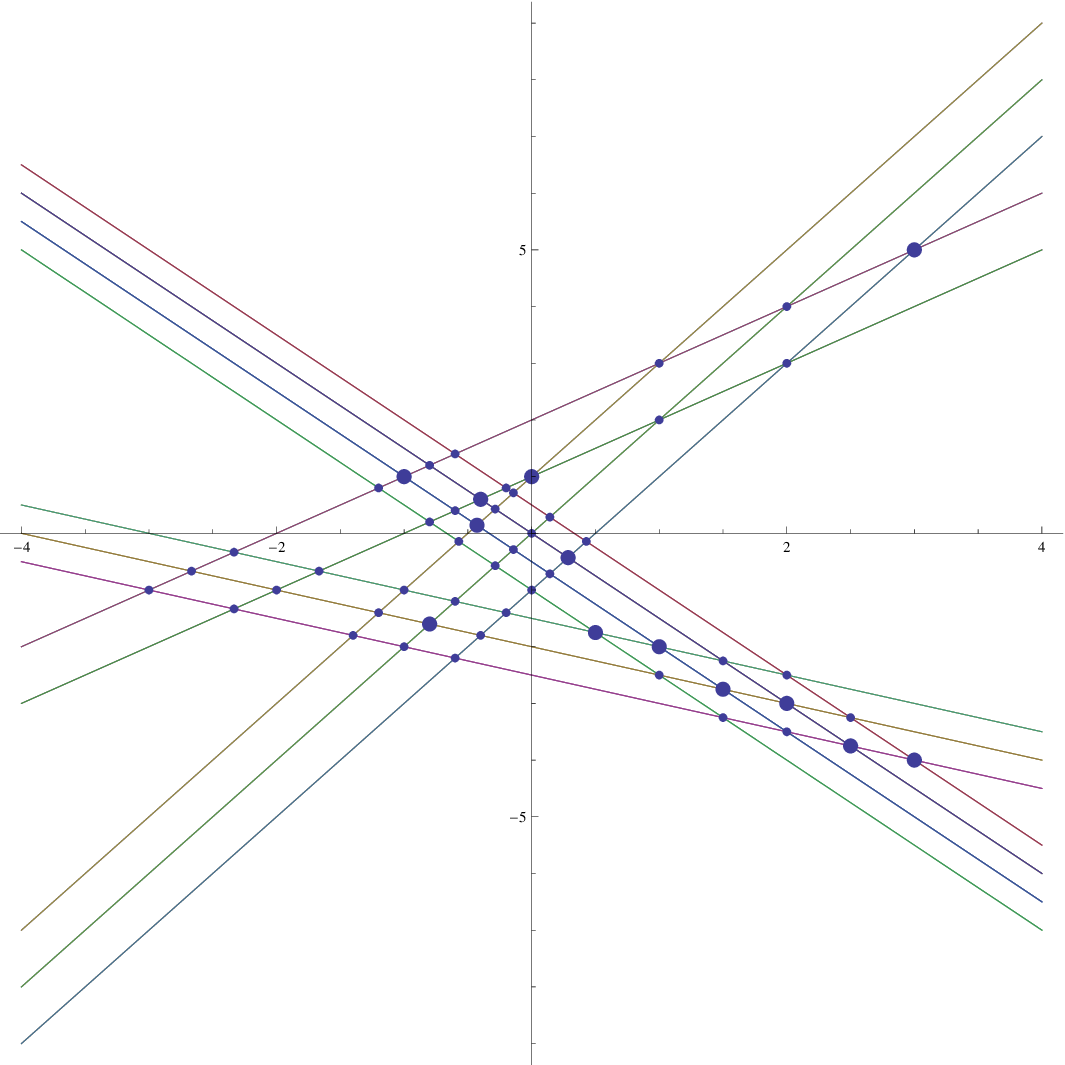}
\caption{\small The supports of the solutions to ${\rm Horn}(A,c)$
defined by (\ref{eq:zonotopeMatrix})} \label{fig:zonotopePicture}
\end{figure}
\end{example}
\begin{example}
\label{ex(2,-1)(2,-1),(-2,1),(-1,3),(-1,3),(1,-3),(1,2),(-1,-2),(-1,-2)}
The sum of a triangle and its sides.
\rm Let us consider the following configuration which is given by
the Minkowski sum of a triangle and all of its sides:
\begin{equation}
A=
\left(
\begin{array}{rr}
 2 & -1 \\
 2 & -1 \\
-2 &  1 \\
-1 &  3 \\
-1 &  3 \\
 1 & -3 \\
 1 &  2 \\
-1 & -2 \\
-1 & -2
\end{array}
\right).
\label{eq:sumOfTriangleAndSidesMatrix}
\end{equation}
\begin{figure}[htbp]
\begin{minipage}{4cm}
\begin{picture}(110,80)
  \put(10,0){\vector(0,1){80}}
  \put(0,10){\vector(1,0){110}}
  \put(50,10){\line(3,1){30}}
  \put(80,20){\line(1,2){10}}
  \put(90,40){\line(1,2){10}}
  \put(100,60){\line(-2,1){20}}
  \put(80,70){\line(-3,-1){30}}
  \put(50,60){\line(-3,-1){30}}
  \put(20,50){\line(-1,-2){10}}
  \put(10,30){\line(2,-1){20}}
  \put(30,20){\line(2,-1){20}}
  \put(50,10){\circle*{3}}
  \put(30,20){\circle*{3}}
  \put(40,20){\circle*{3}}
  \put(50,20){\circle*{3}}
  \put(60,20){\circle*{3}}
  \put(30,30){\circle*{3}}
  \put(40,30){\circle*{3}}
  \put(50,30){\circle*{3}}
  \put(60,30){\circle*{3}}
  \put(20,40){\circle*{3}}
  \put(30,40){\circle*{3}}
  \put(40,40){\circle*{3}}
  \put(50,40){\circle*{3}}
  \put(60,40){\circle*{3}}
  \put(70,40){\circle*{3}}
  \put(10,30){\circle*{3}}
  \put(20,50){\circle*{3}}
  \put(30,50){\circle*{3}}
  \put(40,50){\circle*{3}}
  \put(50,50){\circle*{3}}
  \put(60,50){\circle*{3}}
  \put(50,60){\circle*{3}}
  \put(20,30){\circle*{3}}
  \put(60,60){\circle*{3}}
  \put(70,20){\circle*{3}}
  \put(70,30){\circle*{3}}
  \put(70,50){\circle*{3}}
  \put(70,60){\circle*{3}}
  \put(80,20){\circle*{3}}
  \put(80,30){\circle*{3}}
  \put(80,40){\circle*{3}}
  \put(80,50){\circle*{3}}
  \put(80,60){\circle*{3}}
  \put(80,70){\circle*{3}}
  \put(90,40){\circle*{3}}
  \put(90,50){\circle*{3}}
  \put(90,60){\circle*{3}}
  \put(100,60){\circle*{3}}
\end{picture}
\end{minipage}
{\Large =}
\begin{minipage}{2cm}
\begin{picture}(50,40)
  \put(10,5){\vector(0,1){35}}
  \put(5,10){\vector(1,0){45}}
  \put(30,10){\line(1,2){10}}
  \put(40,30){\line(-3,-1){30}}
  \put(10,20){\line(2,-1){20}}
  \put(10,20){\circle*{3}}
  \put(20,20){\circle*{3}}
  \put(30,10){\circle*{3}}
  \put(30,20){\circle*{3}}
  \put(40,30){\circle*{3}}
\end{picture}
\end{minipage}
{\Large +}
\begin{minipage}{2cm}
\begin{picture}(40,40)
  \put(10,5){\vector(0,1){25}}
  \put(5,10){\vector(1,0){35}}
  \put(10,20){\line(2,-1){20}}
  \put(10,20){\circle*{3}}
  \put(30,10){\circle*{3}}
\end{picture}
\end{minipage}
{\Large +}
\begin{minipage}{2cm}
\begin{picture}(30,40)
  \put(10,5){\vector(0,1){35}}
  \put(5,10){\vector(1,0){25}}
  \put(10,10){\line(1,2){10}}
  \put(10,10){\circle*{3}}
  \put(20,30){\circle*{3}}
\end{picture}
\end{minipage}
{\Large +}
\begin{minipage}{2cm}
\begin{picture}(50,40)
  \put(10,5){\vector(0,1){25}}
  \put(5,10){\vector(1,0){45}}
  \put(10,10){\line(3,1){30}}
  \put(10,10){\circle*{3}}
  \put(40,20){\circle*{3}}
\end{picture}
\end{minipage}
\caption{\small The polygon defining the
matrix~(\ref{eq:sumOfTriangleAndSidesMatrix}) and its Minkowski
decomposition} \label{fig:sumOfTriangleAndSides}
\end{figure}
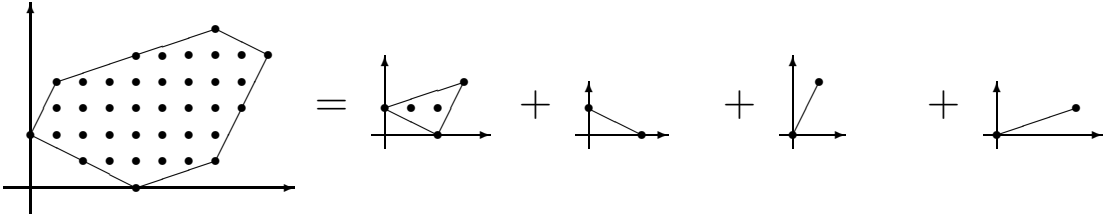
Choose the vector of parameters to be
$c=(-1,-6,3,-2,-10,5,3,-1,-6).$ The corresponding hypergeometric
system is holonomic with rank~$40$ and is defined by the following
differential operators:

$
x_1(\theta_1 - 3\theta_2+ 5) (2 \theta_1 -\theta_2- 6) (2 \theta_1 -\theta_2- 5) (2 \theta_1 -\theta_2- 1)
(2 \theta_1 - \theta_2) (\theta_1 + 2\theta_2+ 3)-
$
$
(\theta_1 + 2\theta_2+ 6) (\theta_1 + 2\theta_2+ 1) (2 \theta_1 -\theta_2- 4) (2 \theta_1 -\theta_2- 3)
(\theta_1 - 3\theta_2+ 10) (\theta_1 - 3\theta_2+ 2),
$

$
x_2 (\theta_1 - 3 \theta_2) (\theta_1 - 3\theta_2+ 1) (\theta_1 - 3\theta_2+ 2) (\theta_1 - 3\theta_2+ 8) (\theta_1 - 3\theta_2+ 9)
$
$
(\theta_1 - 3\theta_2+ 10) (2 \theta_1 -\theta_2- 3) (\theta_1 + 2\theta_2+ 3) (\theta_1 + 2\theta_2+ 4)-
$
$
(\theta_1 - 3\theta_2+ 5) (\theta_1 - 3\theta_2+ 6) (\theta_1 - 3\theta_2+ 7) (2 \theta_1 -\theta_2- 6) (2 \theta_1 -\theta_2- 1)
$
$
(\theta_1 + 2\theta_2) (\theta_1 + 2\theta_2+ 1) (\theta_1 + 2\theta_2+ 5) (\theta_1 + 2\theta_2+ 6).
$

This system has the following five persistent Puiseux polynomial
solutions (which actually turn out to be monomials): $ x_1 x_2,\, x_1^4
x_2^2,\, x_1^{14/5} x_2^{13/5},\, x_1^{13/5} x_2^{21/5},\, x_1^{28/5}
x_2^{26/5}.$ The following thirty pure Puiseux polynomial solutions
to ${\rm Horn}(A,c)$ were computed with Mathematica~9.0: {\small
$$\begin{array} {c}
28 + 15/x_1, \quad x_1^{-4/5}  x_2^{-3/5} (7 x_1 + 22 x_2 + 44 x_1 x_2), \quad x_1^{-1/5} x_2^{-2/5} (196 + 297 x_2 + 231 x_1 x_2),  \\
 x_1^{-3/5} x_2^{-1/5} (198  + 140 x_1 + 165 x_1 x_2), \, x_1^{-7/5} x_2^{1/5} (25 + 120 x_1 + 72  x_1^2), \\
x_1^{4/5} x_2^{-17/5} (3 + 1254 x_2 + 52 x_1  x_2), \\
x_1^{17/5} x_2^{14/5} (298452 + 129675 x_2 + 27930 x_1 x_2 + 588 x_1 x_2^2 + 85 x_1^2 x_2^2), \\
x_1^{2/5} x_2^{-16/5} (91 + 15 x_1 + 15675 x_2 + 3135 x_1 x_2), \quad x_2^{-3} (1040 + 819 x_1 + 62700 x_1 x_2), \\
x_1^{3/5} x_2^{-14/5} (2340 +  182 x_1 + 72675 x_2), \quad x_1^{19/5} x_2^{18/5} (8892 + 266 x_1 + 105 x_2 + 72 x_1 x_2), \\
x_1^3 x_2^3 (426360 + 34884 x_1 + 26600 x_1 x_2 + 1200 x_1^2 x_2 + 51 x_1^2 x_2^2), \\
x_1^{18/5} x_2^{16/5} (43605 + 741 x_1 + 3325 x_2 + 1125 x_1 x_2), \\
x_1^{16/5} x_2^{17/5} (46512 + 6669 x_1 + 900 x_1 x_2 + 64 x_1^2 x_2), \\
{2660 x_1 + 34884 x_1^2 + 51 x_2 + 4500 x_1 x_2 + 74100  x_1^2 x_2}/x_1^7, \\
x_1^{-38/5} x_2^{-1/5} (8151 x_1^2 + 9 x_2 + 1980 x_1 x_2 + 73150 x_1^2 x_2 + 639540  x_1^3 x_2), \\
x_1^{-32/5} x_2^{1/5} (1200 + 33345 x_1 +  170544 x_1^2 + 336 x_2 + 13300 x_1 x_2), \\
x_1^{-34/5} x_2^{2/5} (32 +  1596 x_1 + 17442 x_1^2 + 38760 x_1^3 + 105 x_1 x_2), \\
x_1^{-36/5} x_2^{3/5} (17 +  1575 x_1 + 31122 x_1^2 + 149226 x_1^3), \\ 
x_1^{1/5} x_2^{-18/5} (16 x_1 + 48279 x_2 + 18018  x_1 x_2), \\
x_1^{6/5} x_2^{-8/5} (33 x_1 + 9996 x_2 + 3672 x_1 x_2 + 22100 x_1 x_2^2 + 1326 x_1^2  x_2^2 + 4641 x_1 x_2^3 + 2652 x_1^2 x_2^3), \\
x_1^{9/5} x_2^{-7/5} (81  + 3024 x_2 + 192 x_1 x_2 + 5720 x_2^2 + 1872 x_1 x_2^2 + 624 x_1 x_2^3 + 72 x_1^2  x_2^3), \\
x_1 x_2^{-1} (420 + 216 x_1 + 2925 x_1 x_2 + 175 x_1^2 x_2 +  2145 x_1 x_2^2 + 819 x_1^2 x_2^2), \\
x_1^{8/5} x_2^{-4/5} (23520 + 1728 x_1 +  109200 x_2 + 34125 x_1 x_2 + 38220 x_1 x_2^2 + 2912 x_1^2 x_2^2), \\
x_1^{7/5} x_2^{-6/5} (9504 + 990 x_1 + 128700 x_2 + 41580 x_1 x_2 + 113256 x_1 x_2^2 +  7280 x_1^2 x_2^2 + 4455 x_1^2 x_2^3), \\
x_1^{-22/5} x_2^{-9/5} (1225 x_1^2 + 3780  x_1^3 + 1512 x_1^4 + 75 x_2 + 2730 x_1 x_2 + 18018 x_1^2 x_2 + 27300 x_1^3 x_2),   \\
x_1^{-4} x_2^{-2} (120 x_1^2 + 216 x_1^3 + 45 x_2 + 819 x_1 x_2  + 3250 x_1^2 x_2 + 2925 x_1^3 x_2),                        \\
x_1^{-18/5} x_2^{-11/5} (3456 x_1^2 + 2835 x_1^3  + 5824 x_2 + 65520 x_1 x_2 + 163800 x_1^2 x_2 + 82320 x_1^3 x_2 \\
+ 38220 x_1 x_2^2),    
x_1^{-19/5} x_2^{-13/5} (66 x_1^3 + 2652 x_1 x_2 + 12852 x_1^2 x_2 +  11424 x_1^3 x_2 +\\
 1377 x_2^2 + 18564 x_1 x_2^2 + 48620 x_1^2  x_2^2), 
x_1^{-16/5}  x_2^{-12/5} (198 x_1^2 + 1456 x_2 + 10725 x_1 x_2 +  \\
16632 x_1^2 x_2 + 3696 x_1^3 x_2 + 3432 x_2^2 + 18876 x_1 x_2^2).
\end{array}
$$
} 
We omit the remaining five solutions since they are too cumbersome to display.
Their initial exponents are $(-23/5, 9/5), (-21/5, 8/5), (-19/5, 7/5), (-17/5, 6/5), (-3, 1).$
\end{example}



\begin{thebibliography}{99}
{\small

\bibitem{Beukers2009}
F.\,Beukers. {\it Algebraic A-hypergeometric functions},
Invent. Math.~{\bf 180}, no.~3 (2010),~589-610.

\bibitem{Beukers2011}
F.\,Beukers. {\it Monodromy of A-hypergeometric functions},
arXiv.org 1101.0493v2 (2013), 27 pp.

\bibitem{BeukersHeckman}
F.\,Beukers and G.\,Heckman. {\it Monodromy for the hypergeometric
function~$_n F_{n-1}$}, Invent. Math.~{\bf 95}, no.~2 (1989),~325-354.

\bibitem{CDS}
E.\,Cattani, A.\,Dickenstein, and B.\,Sturmfels.
{\it Rational hypergeometric functions},
Compositio Math.~{\bf 128} (2001),~217-240.

\bibitem{DMS}
A.\,Dickenstein, L.\,Matusevich, and T.\,Sadykov.
{\it Bivariate hyper\-geo\-met\-ric D-modules},
Adv. in Math.~{\bf 196}, no.~1 (2005), 78-123.

\bibitem{DMM}
A.\,Dickenstein, L.\,Matusevich, and E.\,Miller.
{\it Binomial D-modules},
Duke Math.J.~{\bf 151}, no.~3 (2010), 385-429.

\bibitem{FengHeKennawayVafa}
B.\,Feng, Y.-H.\,He, K.D.\,Kennaway, and C.\,Vafa.
{\it Dimer models from mirror symmetry and quivering amoebae},
Adv. Theor. Math. Phys.  {\bf 12}, no.~3 (2008), 489-545.

\bibitem{GGR}
I.M.\,Gelfand, M.I.\,Graev, and V.S.\,Retach. {\it General
hypergeometric systems of equations and series of hypergeometric
type}, Russian Math. Surveys~{\bf 47}, no.~4 (1992),~1-88.

\bibitem{gkz89}
I.M.\,Gelfand, M.M.\,Kapranov, and A.V.\,Zelevinsky. {\it
Hypergeometric functions and toric varieties}, Funktsional. Anal.
i Prilozhen.~{\bf 23}, no.~2 (1989), ~12-26.

\bibitem{higherLogarithms}
R.M.\,Hain and R.\,MacPherson. {\it Higher logarithms}, Illinois Journal of Math. {\bf 34}, no.~2 (1990), 392-475.

\bibitem{Palamodov}
V.P.\,Palamodov. {\it Linear Differential Operators with Constant
Coefficients} (Russian), Nauka, 1967.

\bibitem{PST}
M.\,Passare, T.M.\,Sadykov, and A.K.\,Tsikh.
{\it Nonconfluent hypergeometric functions in several variables and their singularities},
Compos. Math.~141, no.~3 (2005), 787-810.

\bibitem{PT}
M.\,Passare and A.\,Tsikh.
{\it Amoebas: their spines and their contours},
Idempotent Mathematics and Mathematical Physics : International workshop, February 3-10, 2003,
Erwin Schr\"odinger International Institute for Mathematical Physics, Vienna, Austria /
Eds. G.L.\,Litvinov, V.P.\,Maslov.  AMS, 2005. V. 377.

\bibitem{Sadykov-SMZh}
T.M.\,Sadykov. {\it On a multidimensional system of hypergeometric
differential equations},  Siberian Math.~J.~{\bf 39} (1998),~986-997.

\bibitem{Sadykov-MathScand}
T.M.\,Sadykov. {\it On the Horn system of partial differential
equations and series of hypergeometric type}, Math. Scand.~{\bf 91} (2002),~127-149.

\bibitem{Sadykov-Bulletin}
T.M.\,Sadykov. {\it The Hadamard product of hypergeometric series},
Bull. Sci. Math.~{\bf 126}, no.~1 (2002), 31-43.

\bibitem{Sadykov-Doklady2008}
T.M.\,Sadykov. {\it Hypergeometric systems of equations with maximally reducible monodromy},
Doklady Math.~{\bf 78}, no.~3 (2008), 880-882.

\bibitem{Sadykov-Journal of SFU}
T.M.\,Sadykov. {\it Hypergeometric systems with polynomial bases},
Journal of Siberian Federal University. Mathematics \& Physics~{\bf 1}, (2008), 25-32.

\bibitem{Sato}
M.\,Sato. {\it Theory of prehomogeneous vector spaces (algebraic
part)}, Nagoya Math.~J.~{\bf 120}, (1990),~1-34.

\bibitem{Tan07}S.\,Tanab\'e. {\it On Horn-Kapranov uniformisation of the discriminantal
loci},  Advanced Studies in Pure Mathematics. {\bf  46},
(2007),~223-249.

} 

\end{thebibliography}
\end{document}